\journal{Journal of \LaTeX\ Templates}
\newtheorem{theorem}{Theorem}[section]
\newtheorem{lemma}{Lemma}[section]
\newtheorem{prop}{Proposition}[section]
\newtheorem{claim}{Claim}[section]
\theoremstyle{definition}
\newtheorem{deff}{Definition}[section]
\theoremstyle{remark}
\newtheorem{remark}{Remark}[section]
\begin{document}

\begin{frontmatter}

\title{Control under constraints for multi-dimensional reaction-diffusion monostable and bistable equations}

\author[UAM,Deusto]{Domènec Ruiz-Balet}

\ead{domenec.ruizi@uam.es}
\corref{mycorrespondingauthor}
\cortext[mycorrespondingauthor]{Corresponding author at: Departamento de Matem\'aticas, Universidad Aut\'onoma de Madrid, 28049 Madrid, Spain.
E-mail address: domenec.ruizi@uam.es (Domènec Ruiz-Balet)}

\author[FAU,Deusto,UAM]{Enrique Zuazua}

\ead{enrique.zuazua@fau.de}

\address[UAM]{Departamento de Matem\'aticas, Universidad Aut\'onoma de Madrid, 28049 Madrid, Spain}
\address[Deusto]{Chair of Computational Mathematics, Fundaci\'on Deusto,    Av. de las Universidades, 24,    48007 Bilbao, Basque Country, Spain}
\address[FAU]{Chair in Applied Analysis,
Alexander von Humboldt-Professorship, 
Department of Mathematics,
Friedrich-Alexander-Universit\"at Erlangen-N\"urnberg,91058 Erlangen, Germany}

\date{\today}

\begin{abstract}
    
Dynamic phenomena in social and biological sciences can often be modeled by 
reaction-diffusion equations. 
When addressing the control from a mathematical viewpoint,
one of the main challenges is that,
because of the intrinsic nature of the models under consideration, 
the solution, typically a proportion or a density function,
needs to preserve given lower and upper bounds
(taking values in $[0,1])$). 
Controlling the system to the desired final configuration then becomes
 complex, and sometimes even impossible.
 In the present work, we analyze the controllability
 to constant steady-states of spatially homogeneous monostable and bistable semilinear
 heat equations, with constraints in the state,
 and using boundary controls.  We prove that controlling the system to a constant steady-state may become impossible when the diffusivity is too small
 due to the existence of barrier functions. 
 We build sophisticated control strategies combining the dissipativity of the system, the existence of traveling waves, and some connectivity of the set of steady-states to ensure controllability whenever it is possible. This connectivity allows building paths that the controlled trajectories can follow, in a long time, with small oscillations, preserving the natural constraints of the system. This kind of strategy was successfully implemented in one-space dimension, where phase plane analysis techniques allowed to decode the nature of the set of steady-states. These techniques fail in the present multi-dimensional setting. We employ a fictitious domain technique, extending the system to a larger ball, and building paths of radially symmetric solution that can then be restricted to the original domain. 
 
 \noindent\textbf{Resumé}
 
De nombreux phénomènes dynamiques en sciences sociales et biologiques peuvent souvent être modélisés par des équations de réaction-diffusion. Lorsque l'on aborde le contrôle de ces équations d'un point de vue mathématique, l'un des principaux défis est que, en raison de la nature intrinsèque des modèles considérés, la solution, généralement une proportion ou une fonction de densité, doit conserver des bornes inférieures et supérieures données (en prenant des valeurs dans [0, 1]). 
Contrôler le système à la configuration finale souhaitée devient alors complexe, et parfois même impossible.
Dans le présent travail, nous analysons la contrôlabilité à des états stationnaires constants pour des équations de la chaleur semi-linéaires spatialement homogènes monostables et bistables, avec des contraintes sur l'état, et en utilisant des contrôles aux bords. 
Nous prouvons que le contrôle du système à un état stable constant peut devenir impossible lorsque la diffusivité est trop faible en raison de l'existence de fonctions de barrière. 
Nous construisons des stratégies de contrôle sophistiquées
combinant la dissipativité du système, l’existence d’ondes progressives et une certaine connectivité des
ensemble d'états stationnaires pour assurer la contrôlabilité chaque fois que cela est possible. 
Cette connectivité permet de construire des chemins que les trajectoires contrôlées peuvent suivre, sur une longue période, avec de petites oscillations, en preservant les contraintes naturelles du système. 
Ce type de stratégie a été mis en œuvre avec succès dans le cas d’une  dimension spatiale, où les techniques d’analyse de plan de phase ont permis de décoder la nature de l’ensemble de états. Ces techniques échouent dans le cadre multidimensionnel actuel. 
Nous utilisons une technique de domaine fictif, étendant le système à une boule plus grande et construisant des chemins de solutions radialement symétriques, qui peut par suite être transféré au domaine d'origine.
 \end{abstract}

\begin{keyword}
Controllability\sep Reaction-diffusion\sep Constraints \sep Mathematical Biology
\MSC[2010] 00-01\sep  99-00
\end{keyword}

\end{frontmatter}

\linenumbers

\section{Introduction}
In this work we consider the boundary controllability of homogeneous monostable and bistable reaction diffusion equations under natural state constraints coming from the physical model.

\subsection{Motivation}
Reaction-diffusion equations frequently appear in nature in a wide variety of phenomena, such as: 
population dynamics and invasion of species (see the pioneering work of Kolmogorov \cite{KOLMOGOROV37}), neuroscience
\cite{JEVANS}, chemical reactions \cite{PERTHAME}, evolutionary game theory \cite{HOFBAUER,Hutson2000}, magnetic systems in material science and
their phase transitions \cite{DEMASI}, linguistics  \cite{VOGL}, etc.

 The nonlinearities of the models that we are going to discuss fall in two important types: monostable and bistable (Figure \ref{nonlinG}).
    \begin{figure}[H]
    \begin{center}

 \includegraphics[scale=0.35]{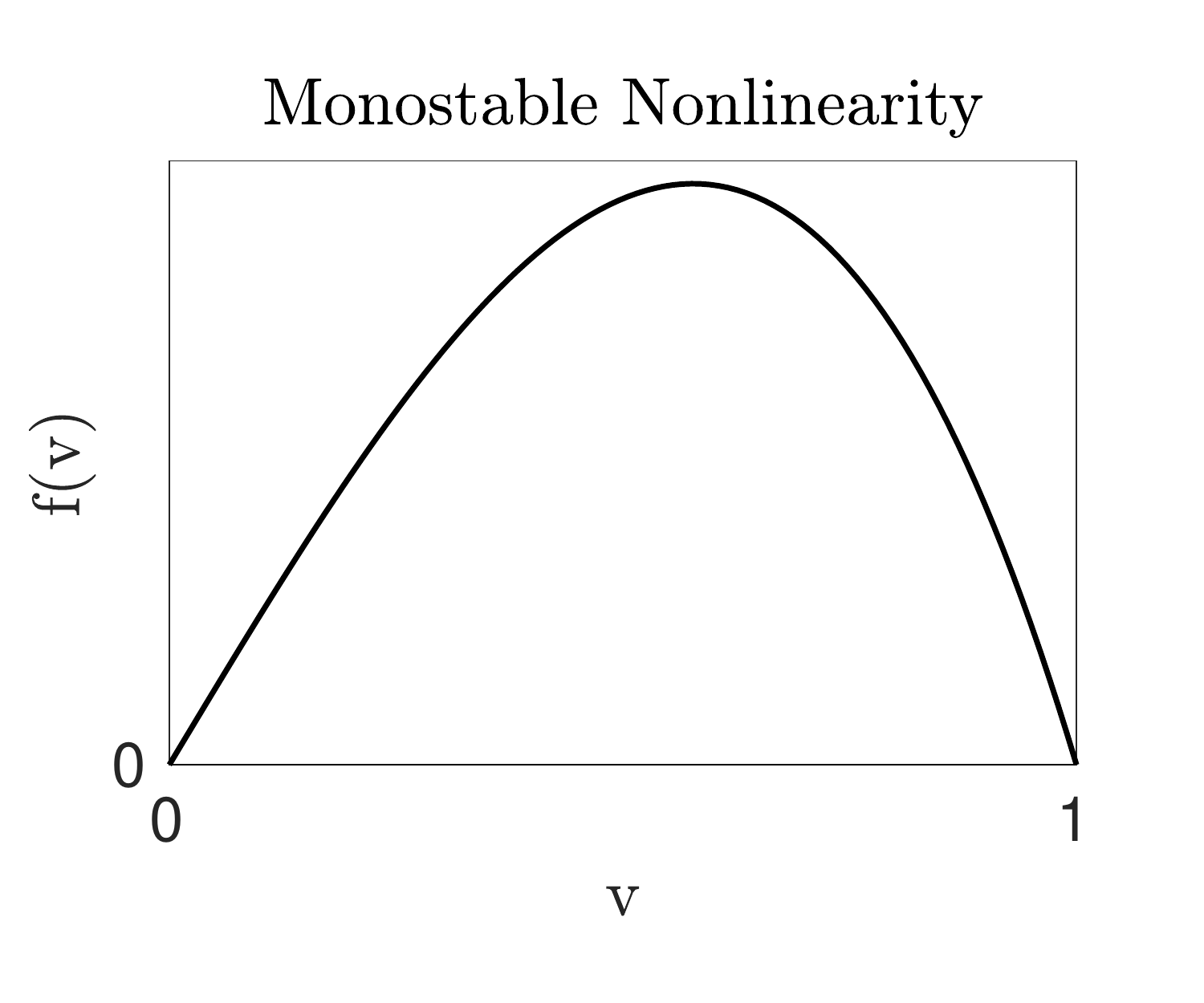}
  \includegraphics[scale=0.35]{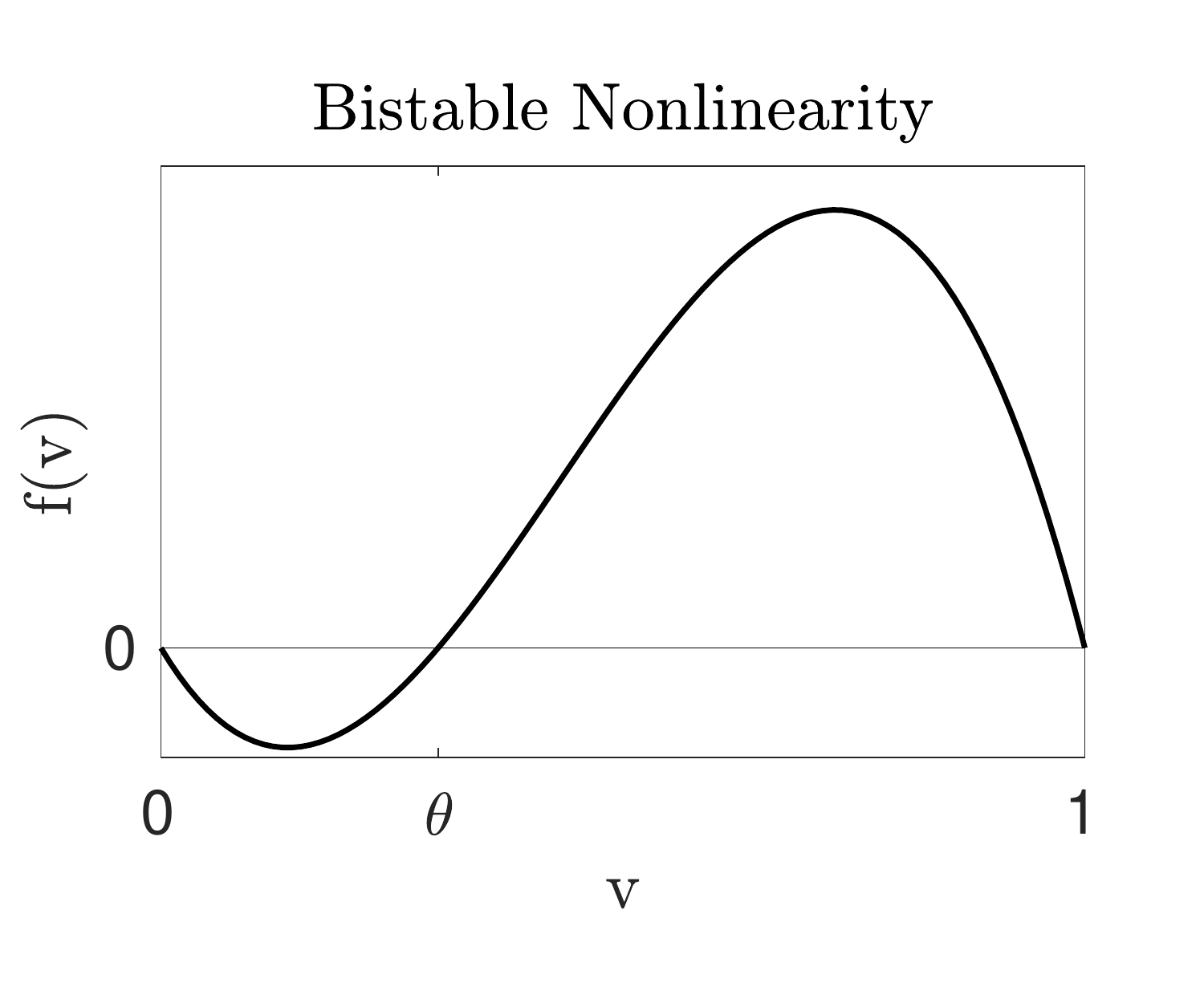}
  \caption{ Monostable nonlinearity (left), bistable nonlinearity (right).}\label{nonlinG}
      \end{center}

\end{figure}

 Among the different control mechanisms, the boundary control is of particular interest since it represents to modify the value in the boundary of a domain whose
accessibility might not be possible. In applications, this quantity is a density, a proportion or a volume fraction. For this reason, the state might be constrained to be non-negative  or even to take values between $0$ and $1$. Obviously, the Dirichlet control needss to fulfill the same restrictions.

However, from the control perspective, the classical methodology, for instance, the control with minimum $L^2$ norm \cite{fursikov1996} (see also \cite{FC-ZZ,lebeau1995}) should not be applied since it can violate the state constraints as we observe in Figure \ref{violation}. The models mentioned above make sense when the state quantities are positive or between $0$ and $1$.

If these constraints are violated during the control process, we lose the physical meaning of the model. The controllability under state constraints of such equations has already been treated in \cite{POUCHOL} for the one-dimensional case.

 \begin{figure}[h]
    \begin{center}
 \includegraphics[scale=0.45]{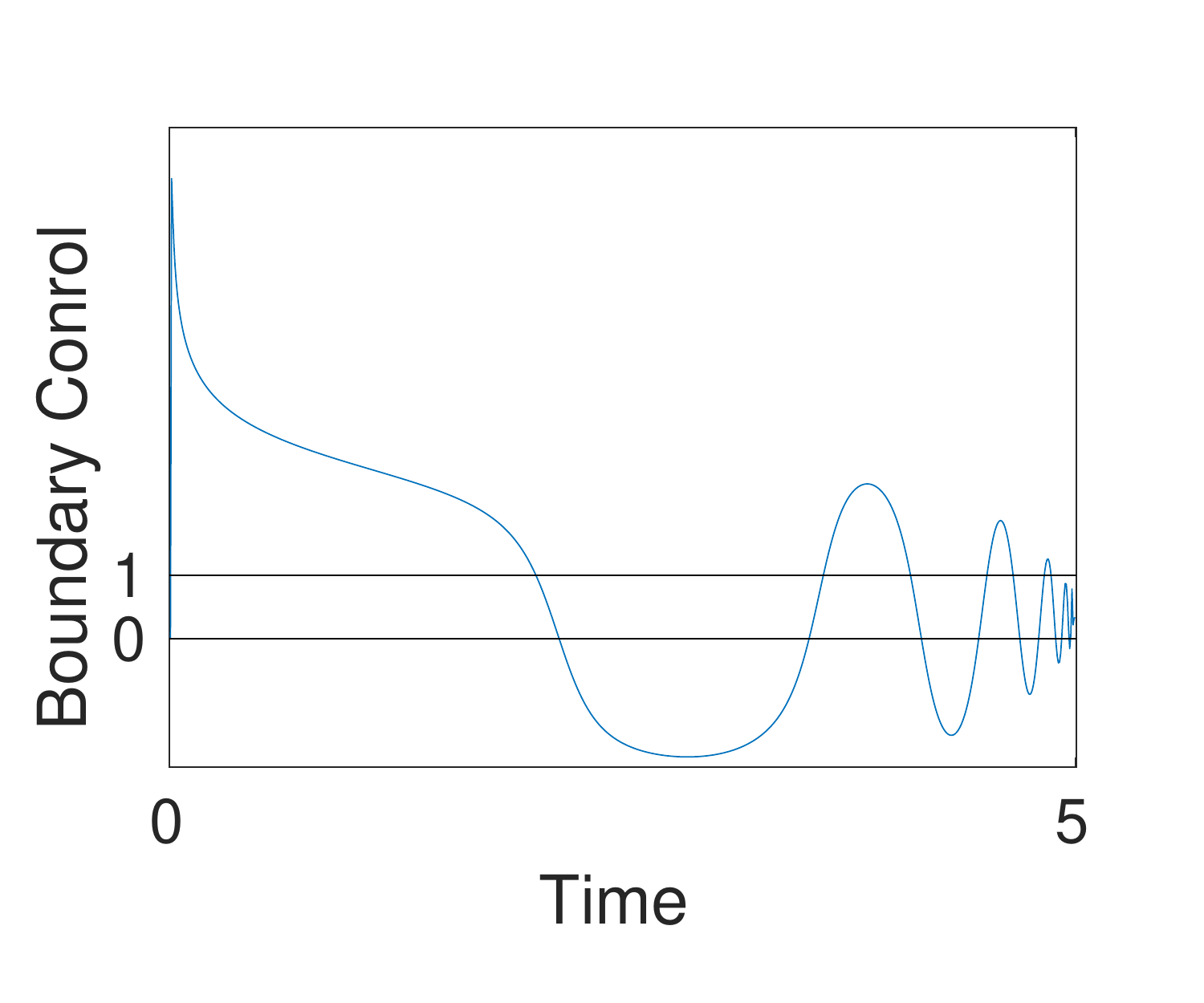}
 \includegraphics[scale=0.45]{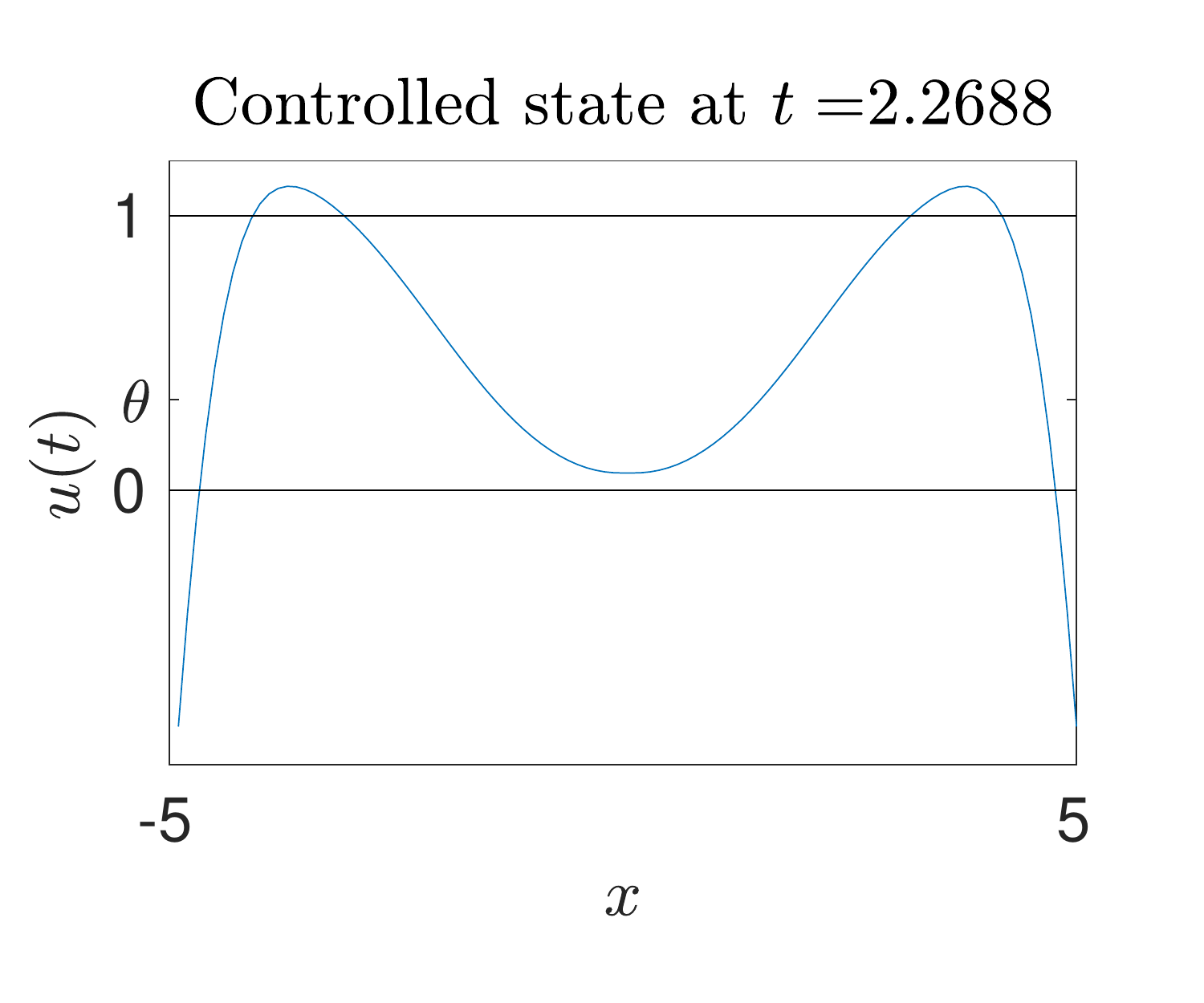}
 \caption{Boundary control of one dimensional bistable equation $u_t-\mu\Delta u=f(u)$ in a bounded domain with diffusivity $\mu=0.75$ and final time $T=5$ from initial state $u_0\equiv 0$ to
 target $w\equiv \theta$. The control violates the constraints imposed by the physical
 nature of the problem.}\label{violation}
  \end{center}
\end{figure}

\subsection{Statement of the problem}

We study the controllability towards certain steady-states under state constraints
of reaction-diffusion equations. Let $\Omega\subset \mathbb{R}^N$ 
be a given bounded $C^2$-regular domain and consider
the following reaction-diffusion equation where we allow ourselves to act with a control function $a$ on the whole boundary:
\begin{equation}\label{maineq}
 \begin{dcases}
  u_t-\mu\Delta u=f(u)&\quad (x,t)\in\Omega\times(0,T],\\
  u(x,t)=a(x,t)&\quad (x,t)\in\partial\Omega\times(0,T],\\
  0\leq u(x,t)\leq 1&\quad (x,t)\in\Omega\times[0,T],
 \end{dcases}
\end{equation}
where $\mu>0$ is the diffusivity constant. We will discuss two types of nonlinearities $f:[0,1]\to\mathbb{R}$, monostable and bistable (see Figure \ref{nonlinG}).

A function $f\in C^1(\mathbb{R})$ is called monostable if:
\begin{align*}
 f(0)=f(1)=0,\quad
 f'(0)>0,\quad f'(1)<0, \quad
 f(s)>0\text{ if } 0<s<1,
\end{align*}
and called bistable if:
\begin{align*}
 &f(0)=f(\theta)=f(1)=0,\quad
 f'(0)<0,\quad f'(1)<0, \quad f'(\theta)>0,\\
 &f(s)<0\text{ if }0<s<\theta, \qquad f(s)>0\text{ if } \theta<s<1.
\end{align*}
For any $T>0$, $u_0\in L^\infty(\Omega;[0,1])$ and $a\in L^\infty(\partial\Omega\times[0,T];[0,1])$, the problem \eqref{maineq} has a unique weak solution $u\in L^\infty (\Omega\times [0,T])\cap C([0,T];H^{-1}(\Omega))$. This follows
by transposition and a fixed point argument (see \cite[Ch.13]{LionsMagenesII}  and \cite{DARIO}). 

In the monostable case typically we require that $u\geq0$
since it models a density of population.

Moreover,  in the bistable case $u$ stands for a proportion of individuals or a concentration of mass,
therefore the model carries the natural constraint:
\begin{equation*}
   0\leq u(x,t)\leq 1.
\end{equation*}
We have to build a control strategy that respects 
this constraint for all time.  However, this is naturally guaranteed by the comparison
principle \cite{PROTTER2012maximum,FIFEcomparison}  because the boundary control fulfills the constraint:
\begin{equation*}
   0\leq a(x,t)\leq 1.
\end{equation*}
It has to be noticed
that this equivalence between state constraints and control constraints
does not hold for any system; this is the case of the wave equation where the maximum principle does not hold (see \cite{DARIOWAVE}).
The solution of \eqref{maineq} is globally defined for any initial data taking values in $u_0(x)\in[0,1]$ since $w\equiv 0$
(respectively $w\equiv 1$) is a subsolution (supersolution).
\begin{deff}
 We say that Equation \eqref{maineq} is controllable to $w\in L^\infty (\Omega,[0,1])$ for an initial datum $u_0\in L^\infty(\Omega,[0,1])$ if there exists some  $T_{u_0}>0$  and a control $a\in L^\infty(\partial\Omega\times[0,T_{u_0}];[0,1])$ such that the solution of \eqref{maineq} with initial data $u_0$ and control $a$ is equal to $w$ at time $T_{u_0}>0$, i.e. $u(T_{u_0};a,u_0)=w$.
\end{deff}
Note that the relevant constraint $0\leq u\leq 1$ on the controlled state is built in the present definition. Moreover, notice that this definition differs from the standard notion of controllability (see \cite{CORON}) in the sense that our time horizon $T$ is not fixed a priory  and depends on the initial datum we are considering.

Our main goal is to discuss the controllability of \eqref{maineq} to the following constant steady-states:
\begin{equation*}
 w\equiv\alpha\qquad \alpha\in\{0,1\},
\end{equation*}
in the monostable case , and 
\begin{equation*}
 w\equiv\alpha\qquad \alpha\in\{0,\theta,1\},
\end{equation*}
in the bistable case.

\subsection{Main features of the problem and main results}
Our first novel result is negative. The admissible controls are between $0$ and $1$. For this reason, when constraints in parabolic models are present, the main cause of lack of controllability is the existence of nontrivial elliptic solutions with boundary values being equal to the limit bounds, i.e. $a=0$ and $a=1$.

For $\mu>0$ small enough the following problem can have a solution

\begin{equation}\label{nontrivial}
 \begin{dcases}
  -\mu\Delta v=f(v)&\quad x\in\Omega,\\
  v=0&\quad x\in\partial\Omega,\\
  0< v< 1&\quad x\in\Omega.
 \end{dcases}
\end{equation} 

\begin{deff}[Barrier]
 A solution of \eqref{nontrivial} is called a barrier.
\end{deff}

Trajectories of \eqref{maineq} corresponding to barriers as initial data (or any initial data above a barrier) cannot be stabilized to $w\equiv 0$. Indeed, this is a consequence of the comparison principle \cite{PROTTER2012maximum,FIFEcomparison}: for any control function $0\leq a(x,t)\leq 1$ and an initial data $u_0$ being a barrier or any function above a barrier, the solution of \eqref{maineq} will be above a barrier.
This leads to a fundamental obstruction for stabilizing around $w\equiv 0$.

Furthermore, a barrier not only prevents to reach the steady-state $w\equiv0$ but also to reach $w\equiv\theta$, since barriers have its maximum value above $\theta$ (see Proposition \ref{MaxPosSol}).

 A nontrivial solution with Dirichlet condition equal to one would have the same effect for approaching $w\equiv 1$ (or $w\equiv \theta$) if we start below this nontrivial solution. However, we will see that such nontrivial solutions do not exist by using a comparison argument with  traveling wave solutions.

 The existence of barriers \eqref{nontrivial} depends basically on a relation between $\mu$, $f$ and $\Omega$, and it has been studied for instance in  \cite{PLLEPSSEQ}.

 A way to study the existence and non-existence of solutions of \eqref{nontrivial} is by means of finding critical points for the energy functional associated to \eqref{nontrivial}:
\begin{equation*}
 J(u)=\int_\Omega \frac{\mu}{2}|\nabla u|^2- F(u) dx,
\end{equation*}
where $F(u(x)):=\int_0^{u(x)}f(s)ds$.  One can see that if $\mu$ is sufficiently big the convex part dominates and $w\equiv 0$ is the unique solution.
However, if $\mu$ is small enough, the term $\int_\Omega F(u)dx$ can be dominant. If the second term is dominant, then there exist a function $v\in H^1_0(\Omega)$, $v\neq 0$ which is a critical point of the functional. 

Observe that the emergence of a barrier occurs in the case of the monostable nonlinearity, since we have that $F(u)\geq 0$. For the bistable case, the same paradigm is found under the assumption that $F(1)>0$. 

In addition, it has to be noticed that the obstruction that a barrier creates depends on being a barrier or above it. Therefore, there might be other initial data for which the target $w\equiv\theta$ can be reached. We will determine the set of initial data for which controllability to $w\equiv\theta$ will hold regardless of $\mu$:  The basin of attraction of $w\equiv 0$, $\mathcal{A}$:
\begin{deff}[Basin of attraction of $w\equiv0$]\label{basin}
We define the basin of attraction of $w\equiv0$ as follows:
    \begin{equation}
        \mathcal{A}:=\left\{u_0\in L^\infty\left(\Omega;[0,1]\right) \text{ such that }\omega_0(u_0)=\{w\equiv0\}\right\} 
    \end{equation}
    where $\omega_0(u_0)$ is the $\omega$-limit by the dynamics \eqref{maineq} with $a\equiv 0$ of $u_0$. Analogously we denote by $\omega_a(u_0)$ the $\omega$-limit by the dynamics \eqref{maineq} with a control function $a\in L^\infty(\partial\Omega\times(0,+\infty);[0,1])$.
\end{deff}

When using a control function taking values in $[0, 1]$, both $w\equiv1$ and $w\equiv0$ are respectively super
and subsolutions, and this implies, by the comparison principle, that, at most, those steady-states can be reached
asymptotically as $t$ tends to infinity, but never in a finite time-horizon

In the bistable case,  nontrivial solutions $w \not\equiv \theta$ may exist with boundary value $a=\theta$. In this case, the simple strategy of setting the boundary control $a(x, t) =\theta$  for large time, stabilizing, and finally applying a local
control does not work. The reason is that these nontrivial steady-state solutions with boundary $\theta$ may attract the dynamics when keeping the boundary control  $a=\theta$, making it impossible to stabilize  the system (with this strategy) into a neighborhood of $w \equiv \theta$. This is why we need to develop more sophisticated control strategies.

The previous discussion points out that at least two regimes can be studied depending on the existence of barriers. Let us introduce the following quantity:
\begin{deff}\label{defpl}
 We define
 \begin{equation*}
  \mu^*(\Omega,f):=\sup\left\{\mu\in\mathbb{R}^+ \text{for which there is a solution of \eqref{nontrivial}}\right\}
 \end{equation*}
 Analogously, we define $\mu_\theta^*(\Omega,f)$ as the supremum value such that a nontrivial solution $w\not\equiv \theta$ of the problem \eqref{nontrivial} exists with Dirichlet boundary conditions equal to $\theta$.
\end{deff}
We will devote more attention to estimates on the thresholds of nontrivial solutions in Section \ref{SMultiplicity}.

The staircase method presented in \cite{DARIO}, roughly speaking, tells us that the problem of controlling to steady-states respecting constraints can be addressed by finding a control such that can steer the system to an admissible continuous path of steady-states connected to the target,  i.e., a connected family of steady-states that take values between $0$ and $1$. The steady-states along the path need to have traces not
touching the extremal values  $0$ and $1$. This leaves  room for the small oscillations needed for the control to drive the system from one steady-state to another.  But this procedure typically requires a long time. The staircase method was used in \cite{POUCHOL} for the one-dimensional bistable semilinear heat equation. We will introduce both results in Section \ref{PREL}.

In the multidimensional case, the construction of the admissible path is done by a restriction argument, considering
a ball containing our domain. The path of steady-states is built using radially symmetric solutions in the ball. This leads, by restriction, to a path of steady-states in the original domain. However, by this argument, the control needs to act all over the boundary of the domain under consideration.

The path constructed exists for every $\mu>0$ and every domain $\Omega$, and it connects $w\equiv 0$ with $w\equiv \theta$. The staircase method allows controlling from any  element of this path to any other other one, in both senses. However, our initial datum, in general, does not belong to the path.  Therefore, in a first control phase, out of the given initial datum, we need to control the system to some point in this path. However, note that, depending of the initial datum of departure, it might be easier to approximate some specific points of this path.

With the construction of the path, we show that nontrivial solutions with boundary value $\theta$ do not constitute a fundamental obstruction to reach $w\equiv\theta$.

Our main results are Theorem \ref{THmono} for monostable equations and Theorem \ref{THbis} for bistable ones:

\begin{theorem}\label{THmono}
 Let $f$ be monostable.
 
 Let $\Omega\subset\mathbb{R}^N$ be a $C^2-$regular domain. The system \eqref{maineq} can be stabilized:
 \begin{enumerate}
  \item  to $w\equiv 1$ for any admissible initial condition and any $\mu>0$.
  \item  to $w\equiv 0$:
  \begin{itemize}
  \item   for any initial condition iff $\mu^*(\Omega,f)<\mu$,
  
   \item for  $u_0$ with $\|u_0\|_{L^\infty}$ small enough if $$\frac{f'(0)}{\lambda_1(\Omega)}<\mu\leq\mu^*(\Omega,f),$$
  \end{itemize}
 \end{enumerate}
 where $\mu^*(\Omega,f)$ defined in Definition \ref{defpl} is a positive constant and $\lambda_1(\Omega)$ is the first eigenvalue of the Dirichlet Laplacian
\end{theorem}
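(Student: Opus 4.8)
The plan is to establish the three assertions separately, in each case driving the system with a \emph{constant} admissible boundary control ($a\equiv1$ or $a\equiv0$) and exploiting the comparison principle together with monotone convergence to a steady state; the small-data regime is then handled by a linearized stability argument, and the sharpness in $\mu$ comes from the barrier obstruction already recorded in the introduction.

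For the stabilization to $w\equiv1$ I would set $a\equiv1$ and first run the dynamics from $u_0\equiv0$. Since solutions stay in $[0,1]$ one has $u(\cdot,h)\ge u(\cdot,0)=0$, so by the comparison principle for the autonomous problem $t\mapsto u(\cdot,t)$ is nondecreasing; being bounded by $1$ it converges to a steady state $v$ with boundary value $1$ and $0\le v\le1$. Because $f\ge0$ on $[0,1]$ in the monostable case, such a $v$ satisfies $-\Delta v\ge0$, hence is superharmonic and $v\ge\min_{\partial\Omega}v=1$; with $v\le1$ this forces $v\equiv1$. For a general admissible $u_0$ I would sandwich $u(t;0)\le u(t;u_0)\le1$ and pass to the limit, obtaining convergence to $1$ for every $\mu>0$.

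For stabilization to $w\equiv0$ when $\mu>\mu^*(\Omega,f)$ I would set $a\equiv0$ and start from $u_0\equiv1$; the same comparison argument now yields a nonincreasing trajectory converging to the maximal nonnegative steady state $v$ with zero boundary data. I would then show $v$ is trivial: superharmonicity gives $v>0$ in $\Omega$ unless $v\equiv0$, while a strong-maximum-principle argument applied to $1-v$ (using $f'(1)<0$, so that $1-v$ solves an equation of the form $-\mu\Delta(1-v)+c(x)(1-v)=0$ with $c\ge0$ near a putative interior contact point) rules out $v$ touching $1$ inside $\Omega$. Thus any nontrivial such $v$ would satisfy $0<v<1$, i.e. would be a solution of \eqref{nontrivial}, a barrier; since $\mu>\mu^*$ excludes barriers by Definition \ref{defpl}, we get $v\equiv0$, and sandwiching an arbitrary $u_0$ below $u(t;1)$ yields global convergence to $0$. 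The converse direction uses the obstruction of the introduction: if $\mu\le\mu^*$ a barrier exists at this very $\mu$ (by monotonicity of barrier existence in $\mu$ and attainment of the supremum at $\mu^*$, which I would import from the multiplicity analysis of Section \ref{SMultiplicity}), and taking $u_0$ equal to that barrier the comparison principle keeps the solution above a barrier for \emph{every} admissible control, so $w\equiv0$ is unreachable; hence controllability of all data to $0$ forces $\mu>\mu^*$.

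Finally, in the regime $f'(0)/\lambda_1(\Omega)<\mu\le\mu^*$ I would again use $a\equiv0$ and linearize at $0$: the principal Dirichlet eigenvalue of $\mu\Delta+f'(0)$ is $f'(0)-\mu\lambda_1(\Omega)<0$, so choosing $\delta\in(0,\mu\lambda_1(\Omega)-f'(0))$ and the positive principal eigenfunction $\phi_1$, the function $\varepsilon e^{-\delta t}\phi_1$ is a supersolution for $\varepsilon$ small, since $(\mu\lambda_1(\Omega)-\delta)\,\overline u-f(\overline u)\ge \overline u\,(\mu\lambda_1(\Omega)-\delta-f'(0)-C\overline u)\ge0$ using $f(s)\le f'(0)s+Cs^2$ near $0$. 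The technical heart, and the step I expect to be the main obstacle within the self-contained part of the argument, is to dominate $u$ by such a supersolution even though $u_0$ need not vanish on $\partial\Omega$: I would let the flow smooth for a short fixed time $t_0$, use parabolic regularity to bound $u(\cdot,t_0)$ in $C^1(\overline\Omega)$ by $C\|u_0\|_{L^\infty}$, and combine this with the Hopf bound $\phi_1\gtrsim\mathrm{dist}(\cdot,\partial\Omega)$ to obtain $u(\cdot,t_0)\le\varepsilon\phi_1$ with $\varepsilon\sim\|u_0\|_{L^\infty}$ small enough both to dominate the data and to validate the supersolution inequality; comparison from $t_0$ onward then gives exponential decay to $0$.
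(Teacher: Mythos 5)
Your proposal is correct in substance, and for the first assertion it takes a genuinely different route from the paper. For stabilization to $w\equiv 1$ the paper (Proposition \ref{TWto1}) compares with a section of a decreasing traveling wave, translated with speed $c_\mu>0$, used as a parabolic subsolution that sweeps the solution up to $1$; you instead run the monotone flow from $u_0\equiv 0$ with $a\equiv 1$, identify the limit as a steady state $v$ with boundary value $1$, and kill it with the superharmonic minimum principle ($-\mu\Delta v=f(v)\ge 0$ forces $v\ge\min_{\partial\Omega}v=1$). Your argument is more elementary and self-contained, but it hinges on the sign condition $f\ge 0$ on $[0,1]$, which is exactly what fails for bistable nonlinearities; the paper's traveling-wave comparison is chosen precisely because the same proof also covers the bistable case with $F(1)>0$. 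For the second assertion, your argument (monotone decrease from $u_0\equiv 1$ under $a\equiv 0$ to the maximal steady state, the strong maximum principle applied to $1-v$ with $f'(1)<0$ to exclude interior contact with $1$, and the barrier obstruction for the converse) is essentially the paper's argument written out in more detail, and your supersolution $\varepsilon e^{-\delta t}\phi_1$ together with the short-time smoothing/Hopf step supplies the local-stability statement in the regime $f'(0)/\lambda_1(\Omega)<\mu\le\mu^*(\Omega,f)$ that the paper merely asserts.

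One caveat, which affects your proof and the paper's equally: the ``only if'' direction requires a barrier to exist at every $\mu\le\mu^*(\Omega,f)$, including at $\mu=\mu^*$ itself. Monotonicity of barrier existence in $\mu$ is sound in the monostable case (a barrier for $\mu_0$ is a subsolution of \eqref{nontrivial} for $\mu<\mu_0$ because $f\ge 0$), but attainment of the supremum is proved nowhere: Section \ref{SMultiplicity} only provides upper and lower bounds for $\mu^*$, and for KPP-type nonlinearities such as $f(s)=s(1-s)$ no barrier exists at $\mu=\mu^*=f'(0)/\lambda_1(\Omega)$, where in fact all data are attracted to $0$ under $a\equiv 0$. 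So the equivalence is delicate exactly at the endpoint $\mu=\mu^*$; your plan to ``import'' attainment from Section \ref{SMultiplicity} imports something that is not there, though the paper's own treatment of this point is no more complete.
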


\begin{remark}
 In the monostable case, there might exist a barrier even if the solution $w\equiv 0$ is still locally stable. In this situation, we might still approach $w\equiv 0$ for certain initial data. However, when $w\equiv 0$ becomes unstable it is not possible to approach $w\equiv 0$ without violating the state constraints. 
\end{remark}
\begin{remark}
 For the case of the Fisher-KPP nonlinearity $f(s)=s(1-s)$ the situation in which a barrier exists together with the fact that $w\equiv 0$ is stable does not occur \cite{PLLEPSSEQ,BERESTYCKI-FRENCH}. 
\end{remark}
\begin{remark}
 Physically it makes sense also to remove the  upper constraint in the state, i.e. to ask only for $u\geq 0$. In this situation, one can see that $w\equiv 1$ can be reached in finite time but not arbitrarily small \cite{DARIO}.
\end{remark}

    Now we turn our attention to bistable nonlinearities. The main controllability result we prove is the one concerning the steady-state $w\equiv\theta$ as a target. The most straightforward strategy would be; first, to set the constant control $a \equiv \theta$ during a long time interval, aiming to approach the target $w \equiv \theta$, and later to complement with a local controllability result. Nevertheless, this strategy does not suffice and does not lead to an optimal result when $0<\mu\leq \mu^*_\theta(\Omega,f)$.
    Then, to find an admissible control that can steer the solution to $w\equiv\theta$ is a priory more challenging. As we shall see, all initial data in the basin of attraction of $w\equiv 0$ , $\mathcal{A}$, can be controlled to the steady-steady  $w\equiv\theta$, independently of the stability properties of the target steady-state and the multiplicity of steady-states with the same boundary conditions.
    
    Note however, the existence of barrier functions already tells us that $\mathcal{A}$ necessarily does not contain the whole set of initial data $0\leq u_0 \leq 1$ for certain ranges of $\mu>0$. The requirement $u_0\in\mathcal{A}$ for controlling to $w\equiv \theta$ is a necessary and sufficient condition: whenever we are not in $\mathcal{A}$ we do not have controllability to $w\equiv\theta$.

 \begin{theorem}\label{THbis}
  Let $f$ be bistable with $F(1)>0$. Let $\Omega\subset\mathbb{R}^N$ be a $C^2$-regular domain.
  \begin{enumerate}
  
                                                                                                           \item\label{th1}  For any $\mu>0$ and any initial data $u_0\in L^\infty(\Omega;[0,1])$, the solution can be stabilized to $w\equiv 1$. More precisely, the solution taking boundary values $a(x,t)=1$ tends to $w\equiv 1$ as $t$ tends to infinity.
\item\label{th2}  $\mathcal{A}=L^\infty(\Omega;[0,1])$ iff $\mu>\mu^*(\Omega,f)$, where $\mathcal{A}$ is the basin of attraction of $w\equiv 0$ (Definition \ref{basin}),  where $\mu^*(\Omega,f)$ is the positive constant in Definition \ref{defpl}. 
\item\label{th3} For $\mu^*(\Omega,f)\geq \mu>0$, if $u_0\notin\mathcal{A}$, then $u_0$ is not controllable to $w\equiv\theta$. 
                      
   \item\label{th4} If $\mu>\mu^*(\Omega,f)$, there exists $T_{\mu}^*>0$ (uniform with respect to $u_0$ but not with respect to $\mu$) such that for every $T\geq T_{\mu}^*$ system \eqref{maineq} is controllable to $w\equiv\theta$ for any initial data $u_0\in L^\infty(\Omega;[0,1])$ by means of a function $a\in L^\infty(\partial\Omega\times[0,T],[0,1])$.
   \item\label{th5} If $\mu^*(\Omega,f)\geq\mu>0$ and $u_0\in\mathcal{A}$, there exists $T^*_{u_0,\mu}>0$ such that for every $T\geq T^*_{u_0,\mu}$ system \eqref{maineq} is controllable to $w\equiv\theta$ by means of a function $a\in L^\infty(\partial\Omega\times[0,T],[0,1])$. Moreover, $T^*_{u_0,\mu}$ is not uniformly bounded for all $u_0\in\mathcal{A}$.
  \end{enumerate}

 \end{theorem}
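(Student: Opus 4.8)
The plan is to handle all five items with a shared toolbox --- the comparison principle, the energy functional $J$, bistable planar traveling fronts, and the staircase method of \cite{DARIO} --- organised around the order-preserving, gradient-like semiflow $S(t)$ of \eqref{maineq} with $a\equiv 0$ (and its analogue $S_1(t)$ with $a\equiv 1$), both of which admit $J$ as a Lyapunov functional. For item \ref{th1} I fix $a\equiv1$: since $w\equiv1$ is stationary and $w\equiv0$ a subsolution, $t\mapsto S_1(t)0$ increases monotonically to the minimal steady state with boundary datum $1$, and this must be $w\equiv 1$, because comparison with a bistable front travelling toward the $0$-state (it travels that way since $F(1)>0$) and swept across $\Omega$ forbids any nontrivial such steady state; the squeeze $S_1(t)0\le S_1(t)u_0\le S_1(t)1=1$ then gives convergence to $w\equiv1$ for every $u_0$. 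Item \ref{th2} is the analogous dichotomy for $S(t)$: if $\mu>\mu^*$ no barrier exists, so $t\mapsto S(t)1$ decreases to the maximal steady state of \eqref{nontrivial} below $1$, namely $w\equiv0$, and $0\le S(t)u_0\le S(t)1\to0$ puts every datum in $\mathcal A$; if $\mu\le\mu^*$ a barrier $v$ exists and, being a nonzero equilibrium, satisfies $v\notin\mathcal A$, whence $\mathcal A\neq L^\infty$.

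Item \ref{th3} is the obstruction result. Its engine is that any barrier $v$ has boundary value $0\le a$, hence is a stationary subsolution of \eqref{maineq} under \emph{every} admissible control; thus $u_0\ge v$ forces $u(t;a,u_0)\ge v$ for all $t$ and all $a$, and since $\max v>\theta$ by Proposition \ref{MaxPosSol}, the state can never equal $\theta$. To pass from ``above a barrier'' to the stated hypothesis $u_0\notin\mathcal A$, I would combine the comparison $u(t;a,u_0)\ge S(t)u_0$ (valid because $a\ge0$) with the gradient-monotone structure of the zero-control flow: $u_0\notin\mathcal A$ means $S(t)u_0$ converges to a nonzero equilibrium $v_\infty\ge\underline v$, the minimal barrier, so $\max S(t)u_0>\theta$ for large $t$ and therefore $\max u(t;a,u_0)>\theta$, excluding the target $\theta$.

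Items \ref{th4} and \ref{th5} share a constructive core and differ only in the first control phase. The backbone is an admissible continuous path of steady states $\{w_s\}_{s\in[0,1]}$ joining a neighborhood of $w\equiv0$ to $w\equiv\theta$ whose traces stay strictly inside $(0,1)$; I would build it by the fictitious-domain device, solving radially symmetric elliptic problems on a ball $B\supset\Omega$ and restricting to $\Omega$, which produces such a path for every $\mu>0$ and every $\Omega$. Feeding this path into the staircase method of \cite{DARIO} yields an admissible control steering the start of the path to $w\equiv\theta$ in finite (long) time, keeping $0\le u\le1$ and bypassing the troublesome $\theta$-boundary steady states. It then remains to drive $u_0$ into the starting neighborhood using $a\equiv0$: for item \ref{th4} ($\mu>\mu^*$), item \ref{th2} gives $\mathcal A=L^\infty$ and the uniform squeeze $0\le S(t)u_0\le S(t)1\to0$ yields a hitting time $T_\mu^*$ independent of $u_0$; for item \ref{th5} ($\mu\le\mu^*$) the same works exactly for $u_0\in\mathcal A$, but taking $u_0\in\mathcal A$ arbitrarily close to the minimal barrier $\underline v$ makes $S(t)u_0$ linger near $\underline v$ for arbitrarily long before descending, so $T^*_{u_0,\mu}$ cannot be bounded uniformly.

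The hard part, I expect, is the multidimensional path construction underpinning items \ref{th4}--\ref{th5}: lacking the phase-plane analysis available in one dimension, one must guarantee through the radial fictitious-domain solutions that the path is continuous, actually connects $w\equiv0$ to $w\equiv\theta$, and keeps every trace strictly interior so that the staircase increments never saturate the constraints. Secondary technical points are the gradient-monotone convergence to $\underline v$ needed in item \ref{th3} and the existence of a barrier exactly at the endpoint $\mu=\mu^*$ in item \ref{th2}, which requires a compactness/limit argument as $\mu\uparrow\mu^*$.
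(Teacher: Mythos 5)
Your overall route coincides with the paper's: traveling-front comparison for item \ref{th1}, the barrier dichotomy for item \ref{th2}, comparison with the zero-control flow plus Proposition \ref{MaxPosSol} for item \ref{th3}, the fictitious-domain radial path fed into the staircase method for items \ref{th4}--\ref{th5}, and comparison with $u_0\equiv 1$ for the uniform time $T^*_\mu$. The one place you genuinely diverge is the non-uniformity in item \ref{th5}: the paper argues by contradiction, extracting a weak$^*$ limit of controls (Banach--Alaoglu) along a sequence $u_{0,n}\in\mathcal{A}$ tending to a minimal barrier and producing a control that steers the uncontrollable datum $v_{\min}$ to $w\equiv\theta$; your ``lingering'' argument (continuous dependence gives $\|S(t)u_{0,n}-v_{\min}\|_\infty\leq e^{Lt}\|u_{0,n}-v_{\min}\|_\infty$, so $\max S(t)u_{0,n}>\theta$ on $[0,T]$ for $n$ large, and then $u(t;a,u_{0,n})\geq S(t)u_{0,n}$ excludes reaching $\theta$ before $T$) is simpler and equally valid. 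Both arguments, however, hinge on the same prerequisite, and that is where your proposal has a genuine gap.

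You assume without proof that $\mathcal{A}$ contains initial data arbitrarily close in $L^\infty$ to the minimal barrier $v_{\min}$. This is exactly the nontrivial content that the paper must establish, and it is not automatic: if $v_{\min}$ were stable from below, every datum near it would have a nonzero $\omega$-limit, no such sequence would exist, and the non-uniformity argument would have nothing to work with. The paper devotes a dedicated claim to this, proving that any minimal barrier is unstable and admits perturbations from below whose zero-control flow converges to $w\equiv 0$ (the family $v_\lambda(0)=\max\{v_{\min}-(1-\lambda)\|v_{\min}\|_\infty,0\}$, the sets $S_0$ and $S_{v_{\min}}$, openness of $S_0$, and the exclusion of the remaining alternatives via minimality and the maximum principle); you need to supply this argument or an equivalent. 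A second, smaller gap sits in the control strategy for items \ref{th4}--\ref{th5}: the staircase theorem requires the state to coincide \emph{exactly} with a steady state on the path, so ``driving $u_0$ into the starting neighborhood with $a\equiv 0$'' does not suffice, especially since the natural limit $w\equiv 0$ has trace $0$ and is not an admissible staircase endpoint. The paper resolves this with Claim \ref{Zorn} (the minimal solution with small positive boundary value on the ball is radial, and its restriction is the minimal solution on $\Omega$), convergence to that minimal solution from below under the boundary control $a_v$, and only then local exact controllability to attach to the path. Finally, a cosmetic fix in item \ref{th3}: as written you only exclude hitting $\theta$ at large times; the clean statement is that $u(T;a,u_0)\equiv\theta$ would force $S(T)u_0\leq\theta$, hence $S(t)u_0\to 0$, contradicting $u_0\notin\mathcal{A}$ (your claimed convergence of $S(t)u_0$ to a single nonzero equilibrium is also delicate for $N\geq 2$; arguing with the $\omega$-limit set avoids it).
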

 The proof of the three first points will be an immediate consequence of the propositions in Section \ref{SMultiplicity}.
 
For $\mu^*(\Omega,f)\geq \mu>0$, we will see that minimal barriers with respect to the $L^\infty$-norm are at the border of $\mathcal{A}$. The nonuniform controllability time of point \ref{th5} of Theorem \ref{THbis} comes by a contradiction argument, taking a sequence of initial data in $\mathcal{A}$ tending to a minimal barrier with respect to the $L^\infty$-norm. 

 \begin{remark}
 Proposition \ref{upboundmu} and Proposition \ref{Bistable} give upper and lower bounds for $\mu^*(\Omega,f)$. For instance, for the bistable one-dimensional case we have that: 
 $$\frac{F(1)^2}{8(F(1)-F(\theta))}\leq\mu^*([0,1],f)\leq \frac{\max_{s\in[0,1]}\frac{f(s)}{s}}{\pi^2}.$$
\end{remark}

   Finally, we state the result for the nonlinearities that satisfy $F(1)=0$. In this case, the traveling wave solutions for the Cauchy problem are stationary. For the prototypical nonlinearity $f(s)=s(1-s)(s-\theta)$, it corresponds to the case in which $\theta=1/2$. 
  
  \begin{theorem}\label{THbis12}
  Let $f$ be bistable with $F(1)=0$ and $\Omega\subset \mathbb{R}^N$ be a bounded $C^2$-regular domain.  For any $\mu>0$ we have that:
  \begin{enumerate}
   \item   $w\equiv 0$ and $w\equiv 1$ are the unique admissible steady solutions for boundary control $a=0$ and $a=1$, respectively. Thus,  for all $\mu>0$ and  initial data $u_0\in L^\infty(\Omega;[0,1])$ the solution reaches asymptotically $w\equiv0$ (resp $w\equiv1$) with boundary control $a=0$ (resp $a=1$).
   \item For every $\mu>0$, there exist a time $T_\mu^*$ (uniform with respect to $u_0\in L^\infty(\Omega;[0,1])$ but not with respect to $\mu$) such that for all $T\geq T_\mu^*$ there exists a function $a\in L^\infty(\partial\Omega\times[0,T],[0,1])$ such that the solution of system \eqref{maineq} at $T$ is equal to $w\equiv \theta$.
  \end{enumerate}
 
 \end{theorem}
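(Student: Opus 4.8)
The plan is to treat the two points separately, with the balanced condition $F(1)=0$ entering decisively through the sign of the potential. First I would record the elementary but crucial observation that, since $f<0$ on $(0,\theta)$ and $f>0$ on $(\theta,1)$, the primitive $F$ decreases from $F(0)=0$ to its minimum $F(\theta)<0$ and then increases back to $F(1)=0$; hence $F(s)\le 0$ on $[0,1]$, with equality only at $s\in\{0,1\}$. This is exactly the hypothesis making the one-dimensional profile equation $-\mu\phi''=f(\phi)$ admit a monotone \emph{standing} wave $\phi:\mathbb{R}\to(0,1)$ with $\phi(-\infty)=0$, $\phi(+\infty)=1$: integrating $-\mu\phi''\phi'=f(\phi)\phi'$ gives $(\phi')^2=-\tfrac{2}{\mu}F(\phi)\ge 0$, which is solvable precisely because $F\le 0$ vanishes (quadratically) only at the endpoints. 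For a fixed unit vector $e$, the planar extensions $\Phi_\tau(x)=\phi(x\cdot e-\tau)$ are then steady states of \eqref{maineq} on all of $\mathbb{R}^N$, valued in $(0,1)$, and this family is what replaces the strictly moving traveling waves used in the unbalanced case.

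To prove Point 1, I would rule out barriers \eqref{nontrivial} by a sliding argument. Given a hypothetical barrier $v$, continuous on $\overline\Omega$ with $v<1$, hence $\max_\Omega v<1$ (and $\max_\Omega v>\theta$ by Proposition \ref{MaxPosSol}), I start with $\tau$ so negative that $\Phi_\tau>\max_\Omega v\ge v$ on $\overline\Omega$, and increase $\tau$ until first contact. Contact cannot occur on $\partial\Omega$, where $v=0<\Phi_\tau$ since $\phi>0$, so the touching point $x_0$ is interior; the difference $w=\Phi_{\tau_0}-v\ge 0$ solves a linear elliptic equation $\mu\Delta w+c(x)w=0$ with bounded $c$, vanishes at the interior minimum $x_0$, and hence $w\equiv 0$ by the strong maximum principle, forcing $v\equiv\Phi_{\tau_0}>0$ on $\partial\Omega$, a contradiction. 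Uniqueness for $a=1$ follows identically by sliding from below, or directly from the symmetry $u\mapsto 1-u$, which sends $f$ to the balanced bistable $\tilde f(s)=-f(1-s)$ and the boundary datum $0$ to $1$. A further application of the strong maximum principle excludes equilibria touching $0$ or $1$ in the interior, leaving only $w\equiv 0$ and $w\equiv 1$.

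The asymptotic convergence in Point 1, and the quantitative timing needed in Point 2, come from the gradient structure. With $a\equiv 0$ frozen, \eqref{maineq} is the $L^2$-gradient flow of the strict Lyapunov functional $J$; trajectories lie in $[0,1]$ and are precompact by parabolic regularization, so by LaSalle their $\omega$-limits are nonempty connected sets of equilibria with boundary value $0$, i.e. $\{w\equiv 0\}$, whence every solution converges to $w\equiv 0$ (and, with $a\equiv 1$, to $w\equiv 1$). For the $u_0$-uniform bound I would compare with the maximal solution $\bar u$ issued from $u_0\equiv 1$: by the comparison principle $0\le u(\cdot,t)\le\bar u(\cdot,t)$ for every admissible $u_0$, while $\bar u$ is monotone decreasing in $t$ and tends to $0$, so by Dini's theorem $\|\bar u(\cdot,t)\|_{L^\infty}\to 0$, yielding a time $T_1=T_1(\mu)$ after which all solutions lie below any prescribed $\varepsilon$.

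Point 2 is then obtained exactly as point \ref{th4} of Theorem \ref{THbis}, the essential gain being that the nonexistence of barriers just proved makes $\mathcal{A}=L^\infty(\Omega;[0,1])$ (Definition \ref{basin}) for \emph{every} $\mu>0$, removing the threshold $\mu^*$. Concretely: set $a=0$ on $[0,T_1]$ to drive any $u_0$ into a small neighborhood of $w\equiv 0$ in the uniform time $T_1$; then apply local, constraint-respecting boundary controllability near $w\equiv 0$, where the linearization $v_t-\mu\Delta v-f'(0)v=0$ is boundary controllable and dissipative since $f'(0)<0$, to land exactly on the radially symmetric admissible path of steady states joining $w\equiv 0$ to $w\equiv\theta$ furnished by the fictitious-domain construction for all $\mu>0$; and finally run the staircase method \cite{DARIO} along that path up to $w\equiv\theta$. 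The last two phases take a fixed time depending only on $\mu,f,\Omega$, so the total control time $T_\mu^*$ is uniform in $u_0$ but not in $\mu$. I expect the main obstacle to be precisely the barrier-nonexistence of Point 1, since this is where $F(1)=0$ must be exploited, via the standing wave and the sliding comparison, to recover for every diffusivity the favorable situation that in the general bistable case holds only for $\mu>\mu^*$.
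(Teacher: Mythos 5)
Your Point 1 is correct, but proved by a genuinely different route than the paper's. You exclude barriers by building the balanced standing wave $\phi$ (possible exactly because $F\le 0$ on $[0,1]$ with equality only at $0$ and $1$) and sliding its planar extensions $\Phi_\tau$ over a hypothetical solution of \eqref{nontrivial} until first contact, which the strong maximum principle converts into a contradiction with the boundary values. The paper (Claim \ref{newclaim}) instead extends the hypothetical barrier by zero to a ball $B_R\supset\Omega$, produces from it a nontrivial \emph{radial} solution with boundary value $0$ (sub/supersolution argument with $\overline u\equiv 1$), and reads the contradiction off the radial phase-plane energy $E=\tfrac12 v_r^2+F(v)$: $E$ is nonincreasing in $r$, equals $F(v(0))<0$ at the center, yet equals $\tfrac12 v_r(R)^2>0$ at the boundary. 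Both arguments hinge on the balance $F(1)=0$; yours is more self-contained (no radialization or phase-plane machinery), while the paper's recycles the radial ODE tools it already set up for the path construction. Your asymptotic statements (gradient structure/LaSalle, comparison with the solution issued from $u_0\equiv 1$, Dini) coincide with the paper's.

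For Point 2 there is a genuine gap in your middle step: ``apply local, constraint-respecting boundary controllability near $w\equiv 0$ \dots to land exactly on the path.'' This is not a local controllability problem in the relevant sense. The staircase theorem (Theorem \ref{DTHM}) requires the traces of the steady states along the path to be bounded away from $0$ and $1$; these traces degenerate at the $w\equiv 0$ endpoint, so the staircase cannot be started at $w\equiv 0$, and any \emph{fixed} path element $w_{a_0}$ you aim at lies at distance of order $\|w_{a_0}\|_{L^\infty}$ from your state, not at an arbitrarily small distance. The boundary control needed to attach to $w_{a_0}$ is its trace plus a correction of size $C\,\|u(T_1)-w_{a_0}\|\approx C\,\|w_{a_0}\|_{L^\infty}$, and since the ratio $\|w_{a_0}\|_{L^\infty(\Omega)}/\min_{\partial\Omega}w_{a_0}$ does not improve as $a_0\to 0$ (it tends to a fixed geometry-dependent constant $>1$ by linearization), while the controllability cost $C$ has no reason to be small, the nonnegativity of the control can fail. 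Your appeal to dissipativity of the linearization at $w\equiv 0$ does not repair this: with boundary datum $0$ the dissipative dynamics drives the state to $w\equiv 0$, not onto the path.

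The missing idea is precisely the paper's Claim \ref{Zorn} together with the intermediate relaxation phase built on it: the minimal steady state $v$ of the ball problem with boundary datum $\epsilon$ restricts on $\Omega$ to the minimal steady state with its own trace $a_v$, and it is radial, hence an element of the path of Claim \ref{constr} with trace bounded away from $0$. After the $a=0$ phase one \emph{freezes} the admissible boundary datum $a=a_v$; by comparison (the solution is squeezed between the solution issued from $0$, which increases to the minimal steady state, and $v$ itself) the trajectory converges to $v$. Only then is local exact controllability invoked, from an \emph{arbitrarily small} neighborhood of $v$, where \cite[Lemma 8.3]{DARIO} guarantees that neither the state nor the control constraints are violated. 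With this replacement — your uniform-time estimate for the first phase is the same as the paper's — your outline of Point 2 becomes the paper's proof.
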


 Several remarks are in order:
 \begin{remark}
 It is known that the fact of having constraints implies that there exists a minimal controllability time \cite{DARIO} (see also \cite{LOHEAC}).
\end{remark}

 \begin{remark}
  Notice that $\mathcal{A}$ contains $L^\infty(\Omega;[0,\theta])$ for all $\mu>0$.  Moreover, in Section \ref{Aexample}, using the phase-plane analysis and the comparison principle, we will give more examples about the initial data lying in $\mathcal{A}$.
 \end{remark}

 \begin{remark}
  
  In fact, the path of steady-states we shall build for all $\mu>0$  in Claim \ref{constr} (see its representation in Figure \ref{Illustration}), linking $w\equiv 0$ and $w \equiv \theta$, passes by a continuum of intermediate steady-states. And the arguments we employ to show the control of the system in finite time to the final target $w \equiv \theta$ could also be applied to control the system to any of the other intermediate steady-states.
  \end{remark}

  \begin{remark}\label{critRemark}
    In this remark, we will see two important limitations of the staircase method.
    \begin{enumerate}
     \item Not every pair of admissible steady-states are connected by an admissible continuous path. 
     \item The staircase method or, more precisely, the existence of an admissible path of steady-states, is a sufficient but not a necessary condition to ensure controllability. Even if there does not exist an admissible path between two steady-states, we might have controllability.
    \end{enumerate}
  Let us give some more details.
    \begin{enumerate}
     \item We say that two $\omega$-limit sets are ordered, and we denote it by $S_1<S_2$, if for every $v\in S_1$ and every $w\in S_2$ one has that $v<w$. In Theorem \ref{THbis}, we also notice a fundamental limitation of the staircase method:
  \begin{center}
         \textit{We cannot aim to construct admissible paths between two steady-states that have ordered $\omega$-limits for controls $a=0$.}´                                                                                                                                                                                                                                                   
                                                                                                                                                                                                                                                                    \end{center}
                                                                                                                                                                                                                                                                   
      If there exist an admissible path between two steady-states, say $u_1$ and $u_2$, this means that we can control from $u_1$ to $u_2$ and vice versa, since we can take the path oppositely. 
      Indeed, for every $T>0$ and for all $a\in L^\infty(\partial \Omega\times [0,T];[0,1])$, denote by $u(t;a,u_0)$ the associated state with control $a$ and initial datum $u_0\in L^\infty(\Omega;[0,1])$, by the maximum principle one has that:
      \begin{equation}\label{omegamax}
       u(t;a,u_2)\geq u(t;a=0,u_2)\quad \forall t\in[0,+\infty]
      \end{equation}
      
      If an admissible path exists between $u_1$ and $u_2$, we would have controllability, namely, there would exist  $T_1\in\mathbb{R}^+$ and $a_1\in L^\infty(\partial\Omega\times[0,T_1];[0,1])$ such that $u(T_1;a_1,u_2)=u_1$.
      
      Then setting
      \begin{equation*}
       a^*:=\begin{cases}
             a_1\quad& t\in[0,T_1]\\
             0\quad &t\in(T_1,+\infty)
            \end{cases}
      \end{equation*}
      we would have a trajectory that would violate \eqref{omegamax} since
      \begin{equation*}
       \omega_{a^*}(u_2)= \omega_0(u_1)<\omega_0(u_2).
      \end{equation*}

     Note that, for applying the contradiction argument with the maximum principle, more relaxed requirements on the $\omega$-limits could be taken.
     
     The same situation would hold for $a=1$ in case that there were admissible nontrivial steady solutions with boundary $a=1$, like in the case of heterogeneous drifts \cite{drift}.
          
     It is important to notice that this limitation is due to pointwise constraints on the controlled states and on the maximum principle. In the absence of such constraints, one can admit more elements in the path, as in  \cite{CORON-TRELAT}, and this nonexistence argument for the path of steady states does not apply.

     \item The staircase method is not the only possible way to control. Given two steady-states $u_1$ and $u_2$ with ordered $\omega$-limits for $a=0$, in which case, it does not exist a path between $u_1$ and $u_2$, it does not mean that we cannot control from one to the other. 
     \begin{center}
      \textit{When constraints are present, we might be able to control from $u_1$ to $u_2$ but not from $u_2$ to $u_1$.}
     \end{center}
  
  Theorem \ref{THbis}, in particular, points that for every $\mu>0$ we are always able to approach the steady-state $w\equiv 1$ taking controls $a=1$ from any initial data in $L^\infty(\Omega,[0,1])$. However, for $0<\mu\leq \mu^*(\Omega,f)$ a barrier exists and blocks the stabilization to $w\equiv0$ for any initial data above a barrier.  
  
  One can also find examples in which the controllability from $u_1$ to $u_2$ is in a finite time horizon holds, but the reverse is impossible, i.e., one might not be able to control $u_2$ to  $u_1$ neither in finite or infinite time horizon. For $0<\mu\leq \mu^*(\Omega,f)$, we can find, for instance, an admissible target steady-state $u_2$ with Dirichlet trace away of $0$ and $1$ that is in the basin of attraction of the maximal barrier with respect to the $L^\infty$-norm that is controllable for any initial steady-state $u_1\in\mathcal{A}$ and $T\geq T^*_{u_0,\mu}>0$.  
However, this is an irreversible control process, from $u_2$ as initial steady-state we cannot control to any $u_1\in\mathcal{A}$.
    \end{enumerate}

 \end{remark}
 
 \begin{remark}
  In Section \ref{Sproofs}, we will construct a path that connects $w\equiv 0$ with $w\equiv \theta$. For the argument made in Remark \ref{critRemark}, if we consider all the elements of the path being initial data for the control $a=0$, the $\omega$-limit for all the initial data considered will be $\{w\equiv 0\}$. Which corresponds to say that all the elements in the path constructed are in $\mathcal{A}$.
 \end{remark}

\begin{remark}

In both cases $F(1) > 0$ or $F(1) = 0$, when the domain $\Omega$   is a ball and the initial data is radially symmetric, the Dirichlet control can be chosen to be radially symmetric as well.
\end{remark}
\begin{remark}

  Furthermore, when $\Omega$ is a ball and $0<\mu\leq\mu^*(\Omega,f)$,  the arguments in the construction of the path can be extended to a path that connects $w\equiv 0$ with the minimal barrier with respect to the $L^\infty$-norm (Section \ref{pathexample}). This means that we can build a control that stabilizes, in an open-loop manner, around the minimal barrier, provided that $u_0\in\mathcal{A}$. 
 
\end{remark}

The structure of the paper is the following:
\begin{itemize}
 \item First, in Section \ref{PREL}, we recall the main results on asymptotic dynamics of the semilinear heat equation \cite{lions1984structure,CAZENAVE-HARAUX}, the staircase method \cite{DARIO}, and the approach taken in dimension one, \cite{POUCHOL}.
 \item Afterwards, in Section \ref{Tlema}, we proceed to provide some technical lemma that are needed for the proofs of the theorems above.
 \item Then, in Section \ref{Sproofs}, we give the proof of the Theorems \ref{THmono}, \ref{THbis} \ref{THbis12}, provide an example of a path arriving to the minimal barrier with respect to the $L^\infty$-norm and more insights about $\mathcal{A}$.
 \item In Section \ref{Snum}, we provide numerical simulations of the construction of the path, an implementation of the quasistatic control in Ipopt, and we give a numerical example of the control and controlled state in minimal time.
 \item Finally, in Section \ref{Sconcl}, we set the conclusions and discuss future perspectives on the topic.

\end{itemize}

\section{Preliminary results}\label{PREL}
\subsection{Asymptotic behavior}
The semilinear heat equation
\begin{equation}\label{SHE}
 \begin{dcases}
  u_t-\mu\Delta u=f(u)&\quad (x,t)\in\Omega\times(0,T],\\
  u=0&\quad (x,t)\in\partial\Omega\times(0,T],\\
   u(0)=u_0\in L^\infty(\Omega) &\quad x\in\Omega,
 \end{dcases}
\end{equation}
where $f$ is globally Lipschitz, is as a gradient dynamical system on the metric space $C_0(\Omega)$,
\begin{equation*}
 u_t=-\nabla_uJ[u],
\end{equation*}
where $J[u]=\int_\Omega \frac{\mu}{2} |\nabla u|^2-F(u)dx$.
$J$ acts as a strict Lyapunov functional.
Due to this fact, the solution of the semilinear heat equation, whenever its trajectory is globally defined and bounded, it approaches the set of steady-states:
\begin{theorem}[Theorem 9.4.2 in \cite{CAZENAVE-HARAUX}]
Let $\mathcal{S}_E:=\{v\in\mathcal{S}\quad\text{such that } J(v)=E\}$ and $J$ be coercive. Then the solution $u$ of \eqref{SHE} and $\mathcal{S}$ satisfy:
\begin{itemize}
 \item  $J(u(t))\to E$,
\item   $\mathcal{S}_E\neq \emptyset$,
\item $d(u(t),\mathcal{S}_E)\to 0$ as $t\to\infty$, where $d$ is the distance in $C_0(\Omega)\cap H_0^1(\Omega)$.
\end{itemize}
\end{theorem}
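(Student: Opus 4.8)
The plan is to exploit the gradient structure of \eqref{SHE} together with the coercivity of $J$ by a LaSalle-type invariance argument. First I would verify that $J$ is a strict Lyapunov functional along trajectories: differentiating the energy along a (sufficiently regular) solution and using $u_t=-\nabla_u J[u]=\mu\Delta u+f(u)$ gives
\[
\frac{d}{dt}J(u(t))=\langle \nabla_u J[u],u_t\rangle=-\int_\Omega |u_t|^2\,dx\leq 0,
\]
so $J(u(\cdot))$ is nonincreasing, and strictly decreasing unless $u_t\equiv 0$, i.e.\ unless $u$ is a steady state. Coercivity of $J$ then bounds $\|u(t)\|_{H_0^1(\Omega)}$ uniformly in $t$ and bounds $J$ from below; combined with monotonicity this already yields the first assertion, $J(u(t))\to E$ for some finite $E$, together with the integrability $\int_0^\infty \|u_t\|_{L^2}^2\,dt<\infty$.

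The key technical step is to upgrade these energy bounds to precompactness of the orbit $\{u(t):t\geq 1\}$ in the topology of $C_0(\Omega)\cap H_0^1(\Omega)$. Here I would invoke parabolic smoothing: since $f$ is globally Lipschitz and the orbit is bounded in $H_0^1(\Omega)$ (hence in $L^\infty(\Omega)$ in the present setting where $0\leq u\leq 1$), the usual $L^p$ and Schauder estimates for the heat semigroup bootstrap the solution into a bounded set of, say, $C^{1,\alpha}(\overline{\Omega})$ for $t$ bounded away from $0$, and this embeds compactly into $C_0(\Omega)\cap H_0^1(\Omega)$. This makes the $\omega$-limit set $\omega_0(u_0)$ nonempty, compact, connected and invariant under the semiflow. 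I expect this regularity and compactness estimate to be the main obstacle, since the remaining arguments are soft.

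Finally I would identify $\omega_0(u_0)$ with steady states at the level $E$. By continuity of $J$ and the monotone convergence $J(u(t))\to E$, the functional is constant and equal to $E$ on the whole $\omega$-limit set. For any $\varphi\in\omega_0(u_0)$, the trajectory $t\mapsto u(t;\varphi)$ remains in $\omega_0(u_0)$ by invariance, so $J(u(t;\varphi))\equiv E$; differentiating and using the strict Lyapunov identity forces $u_t\equiv 0$, whence $\varphi\in\mathcal{S}$ and in fact $\varphi\in\mathcal{S}_E$. This gives $\emptyset\neq\omega_0(u_0)\subset\mathcal{S}_E$, which is the second assertion, and the third follows because $d(u(t),\omega_0(u_0))\to 0$ for precompact orbits by the very definition of the $\omega$-limit set, together with the inclusion $\omega_0(u_0)\subset\mathcal{S}_E$. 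I would emphasize that only convergence to the \emph{set} $\mathcal{S}_E$ is claimed here, so the finer \L ojasiewicz--Simon machinery required to prove convergence of the whole trajectory to a single equilibrium is not needed for this statement.
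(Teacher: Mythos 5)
Your proof is correct: the paper does not prove this statement at all (it is quoted verbatim from the cited reference of Cazenave--Haraux as a preliminary tool), and your argument --- strict Lyapunov functional via $\frac{d}{dt}J(u(t))=-\|u_t\|_{L^2}^2$, coercivity giving boundedness and $J(u(t))\to E$, parabolic smoothing giving precompactness of the orbit in $C_0(\Omega)\cap H_0^1(\Omega)$, and the LaSalle invariance principle identifying $\omega_0(u_0)\subset\mathcal{S}_E$ --- is precisely the classical proof used in that reference. You correctly isolate the compactness/bootstrapping step as the only technical point and correctly observe that no \L ojasiewicz--Simon argument is needed since only convergence to the set $\mathcal{S}_E$, not to a single equilibrium, is asserted.
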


By the LaSalle invariance principle, if the set of steady-states consists of discrete points, the solution converges to one of them. Generally, initial data converge to a particular steady-state, see \cite{lions1984structure}. However, there are tricky examples, involving space dependence of $f$, $f(x,u)$, in which there can exist bounded trajectories that are nonconvergent for $N\geq 2$ (see \cite{POLACIK1996,POLACIK2002}). In the case that $f$ is analytic, the convergence to steady-states is guaranteed \cite{SIMON83,JENDOUBI1998,HARAUXJENDOUBI}. This is a feature of $N\geq 2$, since, in the one-dimensional case, the convergence of bounded trajectories is guaranteed for any $C^2$ nonlinearity by Matano's result \cite{MATANO1978}.

\subsection{Staircase method}
  The primary tool for understanding how we can reach the steady-state $w\equiv \theta$ will be the staircase method.

The following Theorem in \cite{DARIO} ensures that if we find an admissible continuous path of steady-states between two steady-states, then we are able to find a control function that steers one steady-state to the other.
Consider
\begin{equation}\label{Dpb}
 \begin{dcases}
  v_t-\mu\Delta v =f(v)\quad & (x,t)\in \Omega\times (0,T],\\
  v=a(x,t)\quad & (x,t)\in\partial \Omega\times (0,T],\\
  0\leq v(0,x)=v_0(x)\leq 1 & x\in\Omega,
 \end{dcases}
\end{equation}
where $\Omega$ is a bounded domain with boundary $C^2$. We say that $v_1$ and $v_0$ are path connected steady-states if there exist a continuous function (with respect to $\|\cdot\|_{L^\infty}$)  from $[0,1]$ to the set of steady-states $\gamma:[0,1]\to S$ such that $\gamma(0)=v_0$ and $\gamma(1)=v_1$. Denote by $\overline{v}^s:=\gamma(s)$.
\begin{theorem}[Theorem 1.2 in \cite{DARIO}]\label{DTHM}
 Let be $v_0$ and $v_1$ be path connected bounded steady-states. Assume there exists $\nu>0$ such that:
 \begin{equation*}\label{bordercondition}
  \nu\leq\overline{v}^s\leq 1-\nu\quad\text{ a.e. on } \partial\Omega
 \end{equation*}
 for any $s\in[0,1]$. Then, if $T$ is large enough, there exist a control function $a\in L^\infty\left(\partial\Omega\times (0,T) ; [0,1]\right)$ such that the problem \eqref{Dpb} with initial datum $v_0$ and control $a$ admits unique a solution verifying $v(T,\cdot)=v_1$ and $0\leq a\leq 1$ on $(0,T)\times \partial \Omega$.

\end{theorem}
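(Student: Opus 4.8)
The plan is to reduce the global, constrained controllability assertion to a uniform \emph{local} exact controllability statement along the path, and then to chain finitely many such local steps together. Concretely, I would first prove the following local step: there exist $\tau>0$ and $\delta>0$, uniform in $s\in[0,1]$, such that whenever $\|\overline{v}^{s}-\overline{v}^{s'}\|_{L^\infty}<\delta$ one can steer the solution of \eqref{Dpb} from the steady-state $\overline{v}^{s}$ to the nearby steady-state $\overline{v}^{s'}$ in time $\tau$ by a boundary control satisfying $0\le a\le 1$. Granting this, the theorem follows by a compactness-and-concatenation argument.

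For the local step I would linearize around the reference steady-state. Writing $v=\overline{v}^{s}+y$ and $a=\overline{v}^{s}|_{\partial\Omega}+b$, the increment $y$ solves
\[
 y_t-\mu\Delta y = f'(\overline{v}^{s})\,y + g(y)\ \text{ in }\Omega,\qquad y=b\ \text{ on }\partial\Omega,
\]
where $g(y)=f(\overline{v}^{s}+y)-f(\overline{v}^{s})-f'(\overline{v}^{s})\,y$ is superlinear. The linearized boundary-controlled parabolic problem with potential $f'(\overline{v}^{s})\in L^\infty$ is null-controllable in any time $\tau>0$ by the classical Carleman-estimate machinery (Fursikov--Imanuvilov, Lebeau--Robbiano); combining this with a fixed-point / inverse-mapping argument yields local exact controllability of the nonlinear increment to the target $\overline{v}^{s'}-\overline{v}^{s}$, with a control $b$ whose $L^\infty$ norm is controlled by $\|\overline{v}^{s}-\overline{v}^{s'}\|_{L^\infty}$. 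This is where the hypothesis $\nu\le\overline{v}^{s}\le 1-\nu$ on $\partial\Omega$ is essential: since the reference trace lies in $[\nu,1-\nu]$ and the correction $b$ is as small as we like, the total boundary datum $a=\overline{v}^{s}+b$ stays inside $[0,1]$, so the control constraint is respected, and then $0\le v\le 1$ holds throughout by the comparison principle already invoked for \eqref{maineq}.

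Next I would make the local step uniform. The image $\gamma([0,1])$ is compact in $L^\infty(\Omega)$ and $s\mapsto f'(\overline{v}^{s})$ ranges in a bounded subset of $L^\infty$ (as $f\in C^1$ and the traces are bounded away from $0$ and $1$), so the Carleman constants, and hence $\tau$ and the local radius $\delta$, can be chosen independently of $s$. By uniform continuity of $\gamma$ there is a partition $0=s_0<s_1<\dots<s_n=1$ with $\|\overline{v}^{s_{i+1}}-\overline{v}^{s_i}\|_{L^\infty}<\delta$ for every $i$. Applying the local step successively on the intervals $[i\tau,(i+1)\tau]$, I steer $v_0=\overline{v}^{s_0}$ to $\overline{v}^{s_1}$, then to $\overline{v}^{s_2}$, and finally to $\overline{v}^{s_n}=v_1$; concatenating the admissible controls gives one $a\in L^\infty(\partial\Omega\times(0,T);[0,1])$ with $T=n\tau$, which is as large as needed. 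Uniqueness is the standard well-posedness recorded for \eqref{maineq}.

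The main obstacle is the local step, and specifically obtaining it \emph{uniformly} along the path while preserving the hard constraint $0\le a\le 1$. Null-controllability of the linearization is classical, but three points require care: (i) the dependence of the controllability cost on the potential $f'(\overline{v}^{s})$ must be bounded uniformly in $s$, which I would secure from the compactness of $\gamma([0,1])$ and the resulting uniform $L^\infty$ bound on the potentials; (ii) the fixed-point step must deliver a control whose size is genuinely $O(\|\overline{v}^{s}-\overline{v}^{s'}\|_{L^\infty})$, so that the margin $\nu$ is never exhausted over any single step; and (iii) one must ensure the interior state stays in $[0,1]$ for all intermediate times, which, as noted, reduces to guaranteeing $0\le a\le 1$ and then applying the comparison principle.
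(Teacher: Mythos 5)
Your proposal is essentially the staircase argument that the paper attributes to \cite{DARIO}: the paper does not reprove this theorem but summarizes its proof as extracting a finite sequence of nearby steady-states along the path and applying local exact controllability (small controls, so the $\nu$-margin and the comparison principle keep state and control in $[0,1]$) between consecutive ones, which is exactly your uniform-local-step-plus-concatenation scheme. The linearization/Carleman local step, the compactness argument for uniformity in $s$, and the constraint-preservation mechanism all coincide with the cited proof, so your attempt matches the intended argument.
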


The proof is based on extracting a finite sequence of points of the continuous path of steady-states and apply local controllability between them for going from one to another until the target is reached. This is the reason why it is called the staircase.

\subsection{One dimensional approach}
For proving the controllability to $w\equiv \theta$ under the prescribed state constraints, one strategy is to find an admissible path of steady-states \cite{POUCHOL}. The construction of a path of steady-states for controlling the one-dimensional semilinear heat equation was firstly considered in \cite{CORON-TRELAT}. For finding the admissible path, one sees the steady-states for the problem:
\begin{equation*}
\begin{cases}
  u_t-u_{xx}=f(u),\\
 u(0,t)=a_1(t),\quad u(L,t)=a_2(t),\\
 0\leq u(x,0)\leq 1,
\end{cases}
\end{equation*}
as an ODE system:
\begin{equation}\label{1dstatic}
 \frac{d}{dx}\begin{pmatrix}
              v\\
              v_x
             \end{pmatrix}=\begin{pmatrix}
             v_x\\
             -f(v)
             \end{pmatrix},
\end{equation}
for initial conditions:
\begin{equation*}
 \begin{pmatrix}
  v^{(s)}(0)\\
    v_x^{(s)}(0)
 \end{pmatrix}
=\begin{pmatrix}
  s\alpha+(1-s)\theta\\
  s\beta
 \end{pmatrix}.
\end{equation*}
The key point is to find an admissible invariant region 
for the dynamics \ref{1dstatic}
that ensures that all steady-states will be admissible. Remind that an invariant region in the phase plane is a set $\Gamma\subset\mathbb{R}^2$
such that for every $(v^0,v^0_x)\in \Gamma$ the solution of \eqref{1dstatic} 
with initial data $(v^0,v^0_x)$ remains inside $\Gamma$ for all $x\geq 0$.
Moreover, if the invariant region $\Gamma$ is such that $\forall (v,v_x)\in \Gamma$
we have that $v\in[0,1]$ then it will be an admissible invariant region.

Then by solving the ODE, we will obtain the required boundary conditions. Moreover, by continuous dependence on the initial data, we have that the path is continuous. Figure \ref{fig1dstatic} illustrates the procedure to build the path of steady-states and the invariant region for \eqref{1dstatic}.

\begin{figure}
    \begin{center}

\includegraphics[scale=0.5]{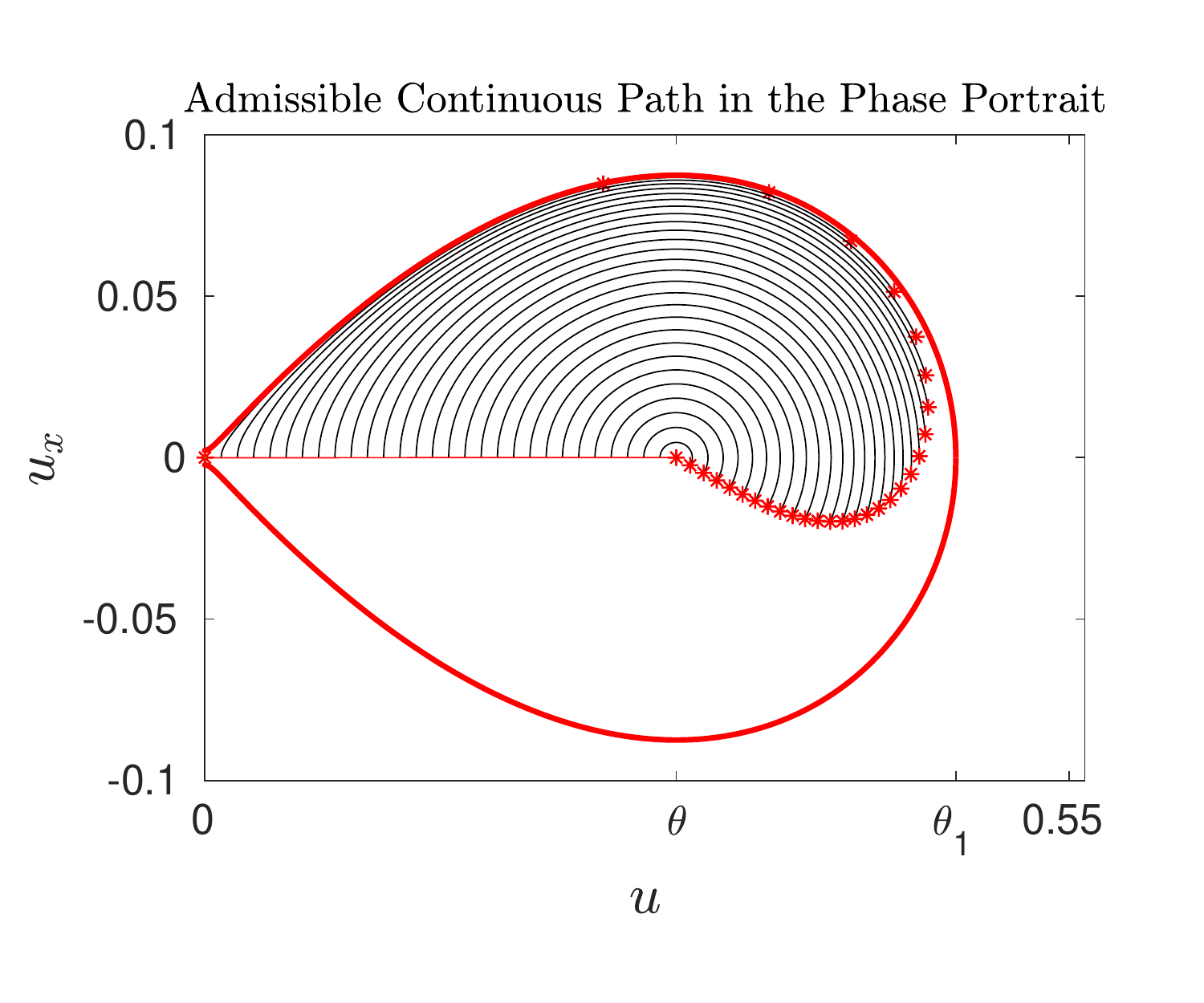}
\includegraphics[scale=0.5]{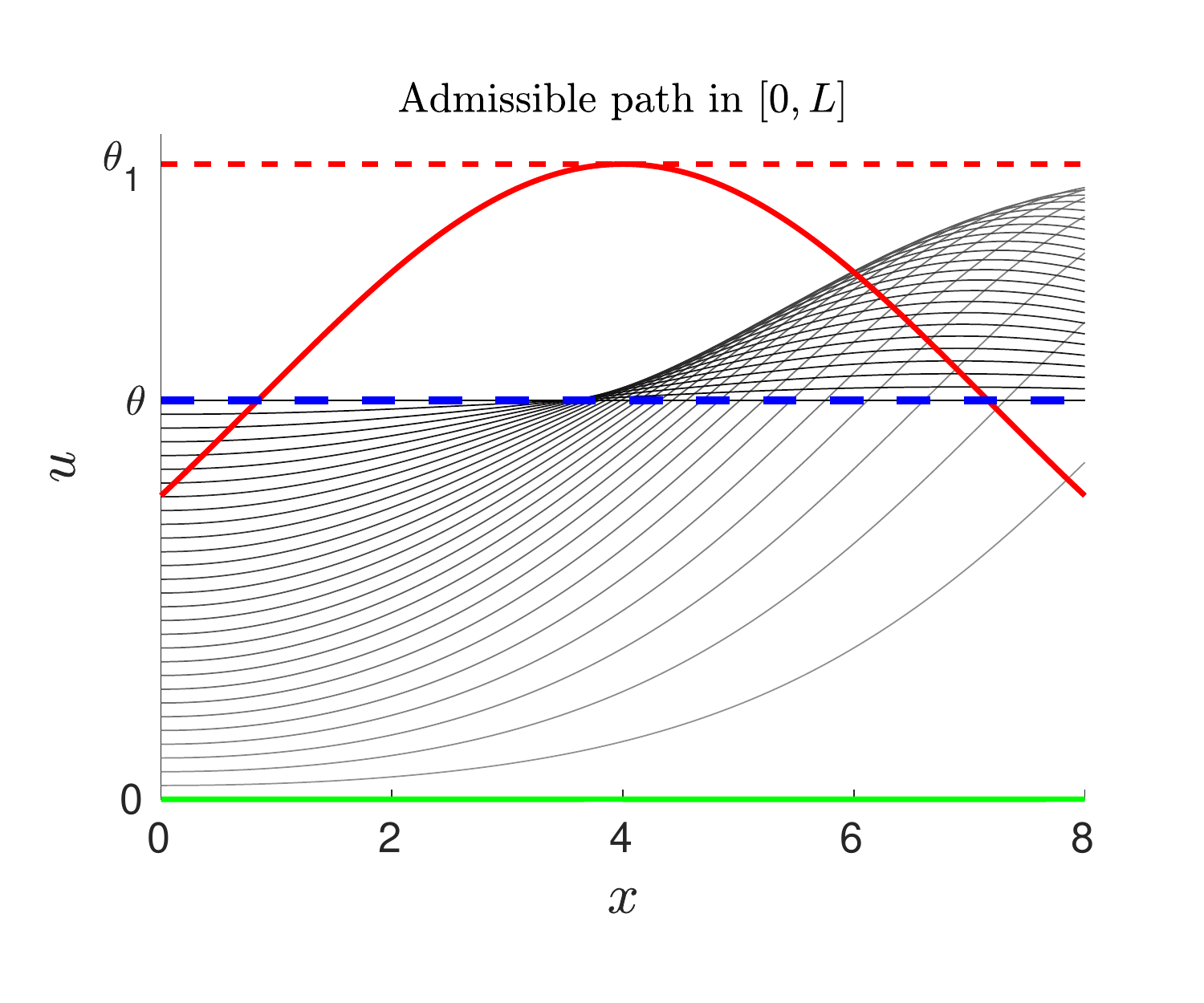}
 \caption{(Left) Phase portrait and generation of the path of steady-states via solving \eqref{1dstatic} from $0$ to $L$.
 (Right) The corresponding continuous path in its natural domain $[0,L]$, (red) the solution delimiting the invariant region.}\label{fig1dstatic}
 \end{center}
\end{figure}
\begin{remark}
 In the one dimensional approach the authors \cite{POUCHOL} work in the framework of variable $L$ instead of considering the diffusivity $\mu$.
\end{remark}

\section{Technical Lemmas}\label{Tlema}

\subsection{Estimates on the existence of nontrivial solutions}\label{SMultiplicity}
In this subsection, we study the existence of nontrivial solutions of the boundary value problems around our steady-states of interest.

\begin{prop}[An upper bound for $\mu^*$]\label{upboundmu}
 Let $f:\mathbb{R}\to \mathbb{R}$, assume that $f$ is bounded. Assume furthermore that $f(0)=0$
 Consider:
 \begin{equation}\label{fineq}
 \begin{dcases}
    -\mu\Delta v= f(v)&\quad x\in\Omega,\\
  v=0&\quad x\in\partial\Omega,\\
  1>v>0&\quad x\in\Omega.
 \end{dcases}
 \end{equation}
Then,
\begin{equation*}
 \mu^*\leq \frac{\max_{s\in[0,1]}\frac{f(s)}{s}}{\lambda_1(\Omega)},
\end{equation*}
where $\lambda_1(\Omega)$ is the first eigenvalue of the Dirichlet Laplacian.
\end{prop}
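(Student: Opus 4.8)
The plan is to test the equation against the first Dirichlet eigenfunction of the Laplacian, the standard device for bounding the principal parameter in a sublinear-type elliptic problem. Let $\varphi_1$ denote the first eigenfunction associated with $\lambda_1(\Omega)$, normalized so that $\varphi_1>0$ in $\Omega$ and $\varphi_1=0$ on $\partial\Omega$, i.e. $-\Delta\varphi_1=\lambda_1(\Omega)\varphi_1$. Suppose $\mu>0$ admits a nontrivial solution $v$ of \eqref{fineq}. Since $f$ is bounded, elliptic regularity makes $v$ smooth enough to integrate by parts freely, and both $v$ and $\varphi_1$ vanish on $\partial\Omega$.

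First I would multiply $-\mu\Delta v=f(v)$ by $\varphi_1$ and integrate over $\Omega$. Integrating by parts twice, the boundary terms drop because $v=\varphi_1=0$ on $\partial\Omega$, and using $-\Delta\varphi_1=\lambda_1(\Omega)\varphi_1$ I obtain the scalar identity
\begin{equation*}
 \mu\,\lambda_1(\Omega)\int_\Omega v\,\varphi_1\,dx=\int_\Omega f(v)\,\varphi_1\,dx.
\end{equation*}
Equivalently, one passes through the weak formulations of the two equations — testing the equation for $v$ with $\varphi_1$ and the eigenvalue equation with $v$ — so that only $H^1_0$ regularity is actually required to reach the same identity.

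Next I would estimate the right-hand side. Since $0<v<1$ in $\Omega$, I write $f(v)=\tfrac{f(v)}{v}\,v$ and bound $\tfrac{f(v(x))}{v(x)}\le \max_{s\in[0,1]}\tfrac{f(s)}{s}=:M$ pointwise; multiplying by $v\,\varphi_1\ge 0$ and integrating gives $\int_\Omega f(v)\,\varphi_1\,dx\le M\int_\Omega v\,\varphi_1\,dx$. Because $v>0$ and $\varphi_1>0$ in the interior, the weight $\int_\Omega v\,\varphi_1\,dx$ is strictly positive, so I may divide the identity above by it to conclude $\mu\,\lambda_1(\Omega)\le M$, that is $\mu\le M/\lambda_1(\Omega)$. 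As this holds for every $\mu$ for which \eqref{fineq} has a solution, taking the supremum over such $\mu$ yields $\mu^*\le \max_{s\in[0,1]}\tfrac{f(s)}{s}\big/\lambda_1(\Omega)$.

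The argument is essentially routine; the only points requiring care are (i) justifying the integration by parts, handled either by elliptic regularity or directly through the weak formulations as above, and (ii) the well-definedness and finiteness of $M$, which near $s=0$ should be read as the limit $f'(0)$ since $f(0)=0$ and $f$ is $C^1$ in the cases of interest. The \emph{strict} positivity of $\int_\Omega v\,\varphi_1\,dx$, guaranteed by the interior sign conditions $v>0$ and $\varphi_1>0$, is precisely what makes the final division legitimate, and is the one structural ingredient that would fail for sign-changing test functions.
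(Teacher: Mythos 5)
Your proof is correct. Note, however, that the paper itself does not spell out an argument for this proposition: it simply states that ``the proof is classical and follows by comparison arguments and energy estimates'' with references to the classical literature. Your eigenfunction-testing argument is one of the two standard routes and is complete as written: testing the equation with $\varphi_1$, integrating by parts twice, and using $-\Delta\varphi_1=\lambda_1(\Omega)\varphi_1$ gives $\mu\lambda_1(\Omega)\int_\Omega v\varphi_1\,dx=\int_\Omega f(v)\varphi_1\,dx\le M\int_\Omega v\varphi_1\,dx$, and the strict positivity of the weight legitimizes the division. The variant the paper most likely has in mind (``energy estimates'') tests instead with $v$ itself: $\mu\int_\Omega|\nabla v|^2\,dx=\int_\Omega f(v)v\,dx\le M\int_\Omega v^2\,dx$, and then invokes the variational characterization $\lambda_1(\Omega)\int_\Omega v^2\,dx\le\int_\Omega|\nabla v|^2\,dx$ to reach the same conclusion; the two arguments are dual to one another and of equal strength here. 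Your two points of care are well placed; the one worth emphasizing is the finiteness of $M=\max_{s\in[0,1]}f(s)/s$: under the proposition's literal hypotheses ($f$ bounded, $f(0)=0$) this maximum need not be finite, and, as you say, it is the $C^1$ regularity of $f$ assumed throughout the paper (so that $f(s)/s\to f'(0)$ as $s\to0^+$) that makes the stated bound meaningful.
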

The proof is classical and follows by comparison arguments and energy estimates \cite{PLLEPSSEQ,HERVE}.

\begin{prop}[A lower bound for $\mu^*$]\label{Bistable}
 
  Assume that $f(0)=f(\theta)=f(1)=0$, and that $f'(0)<0$, $f'(1)<1$, $f'(\theta)>0$. Moreover consider $F(v)=\int_0^v f(s)ds$ and assume that $F(1)>0$.
  Consider $\Omega\subset\mathbb{R}^N$ with boundary $C^2$ 
  and problem \eqref{fineq}.
  Denote by $B_\Omega$ a ball of maximal measure inside $\Omega$, $B_\Omega\subset\Omega$.
  Then, for any $\mu>0$ fulfilling 
   \begin{equation*}
     \mu<\frac{2\delta^2\Gamma\left(\frac{N}{2}+1\right)^{2/N}\left(F(\theta)+(1-\delta)^N(F(1)-F(\theta))\right)m(B_\Omega)^{2/N}}{\pi \left(1-(1-\delta)^N\right)},
    \end{equation*}
  there exists a solution of the problem \eqref{upbound}, where $\delta>0$ fulfills
    \begin{equation*}
   \delta < 1-\left(\frac{-F(\theta)}{F(1)-F(\theta)}\right)^{1/N},
  \end{equation*}
  and $\Gamma$ is the gamma function.
  This provides a lower bound of $\mu^*$.
 \end{prop}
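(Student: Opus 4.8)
The plan is to produce the nontrivial solution variationally, as a nonzero minimizer of the energy
\begin{equation*}
J(u)=\int_\Omega \frac{\mu}{2}|\nabla u|^2-F(u)\,dx
\end{equation*}
over $H^1_0(\Omega)$. Since $f(0)=0$, the function $v\equiv 0$ is always a critical point with $J(0)=0$, so it suffices to exhibit a single competitor $u\in H^1_0(\Omega)$ with $0\le u\le 1$ and $J(u)<0$: the infimum is then strictly negative and, once attainment is established, is reached at a nontrivial point. To keep the eventual minimizer inside $[0,1]$ I would first replace $f$ by its truncation $\bar f$ (equal to $f$ on $[0,1]$ and $0$ outside), whose primitive $\bar F$ is constant on $\{s\le 0\}$ and on $\{s\ge 1\}$. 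Minimizing $\bar J$ over $H^1_0(\Omega)$ and then truncating any candidate pointwise to $[0,1]$ does not increase $\bar J$, because truncation never increases the Dirichlet term and leaves $\bar F$ unchanged; and on $[0,1]$ we have $\bar f=f$, so a truncated minimizer solves $-\mu\Delta v=f(v)$. Note that $\bar J$ and $J$ agree on $[0,1]$-valued functions, so the test-function estimate below transfers directly.

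The heart of the proof is the choice of competitor. I would take the largest inscribed ball $B_\Omega$, of radius $R$ with $m(B_\Omega)=c_N R^N$ and $c_N=\pi^{N/2}/\Gamma(\tfrac N2+1)$, and set $u\equiv 1$ on the concentric ball of radius $(1-\delta)R$, $u\equiv 0$ outside $B_\Omega$, and $u(x)=(R-|x|)/(\delta R)$ on the transition annulus; extending by zero gives $u\in H^1_0(\Omega)$ with $0\le u\le 1$. Then $|\nabla u|=1/(\delta R)$ on the annulus and vanishes elsewhere, so the Dirichlet term equals $\mu\,m(B_\Omega)\bigl(1-(1-\delta)^N\bigr)/(2\delta^2 R^2)$. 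For the potential term I would use $F\equiv F(1)$ on the plateau and the elementary bound $F\ge F(\theta)$ on $[0,1]$ (recall that $F(\theta)=\min_{[0,1]}F$ in the bistable case) on the annulus, obtaining
\begin{equation*}
\int_\Omega F(u)\,dx\ge m(B_\Omega)\bigl[F(\theta)+(1-\delta)^N(F(1)-F(\theta))\bigr].
\end{equation*}
The stated smallness condition on $\delta$ is exactly what makes this bracket positive. Imposing $J(u)<0$ and inserting $R^2=\Gamma(\tfrac N2+1)^{2/N}m(B_\Omega)^{2/N}/\pi$ then reproduces the announced threshold on $\mu$.

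It remains to justify that the negative infimum is attained and yields an admissible barrier. Since $\bar f$ is bounded, $\bar F$ has at most linear growth, so $\bar J$ is coercive on $H^1_0(\Omega)$, and it is weakly lower semicontinuous (the gradient term is convex, the potential term is weakly continuous by the Rellich compact embedding); the direct method thus yields a minimizer $v$, which by the truncation step may be taken with $0\le v\le 1$, and which is nontrivial because $\bar J(v)<0=\bar J(0)$. Writing $-\mu\Delta v+Kv=f(v)+Kv\ge 0$ with $K$ a Lipschitz constant of $f$ on $[0,1]$, the strong maximum principle forces $v>0$ in $\Omega$; applying the same argument to $1-v$, which satisfies an analogous inequality since $f(1)=0$, forces $v<1$, so $v$ solves \eqref{fineq}. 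I expect the genuine work to lie not in the energy computation, which is essentially forced once the plateau competitor is fixed, but in the careful handling of the state constraint $0<v<1$: the truncation argument and the two applications of the strong maximum principle are precisely what turn a bare negative-energy minimizer into a solution strictly interior to $[0,1]$.
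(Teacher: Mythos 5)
Your proposal is correct and follows essentially the same route as the paper: the identical plateau-plus-linear-annulus competitor supported in the maximal inscribed ball $B_\Omega$, the same lower bound $F\geq F(\theta)$ on the transition region and $F=F(1)$ on the plateau, and the same algebra producing the stated conditions on $\delta$ and $\mu$ so that the energy of the competitor is negative while $J[0]=0$. The only difference is that you spell out the existence and admissibility of the resulting critical point (truncation of $f$, coercivity and weak lower semicontinuity for the direct method, and the strong maximum principle giving $0<v<1$), steps the paper leaves implicit by appealing to the classical literature.
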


 \begin{remark}
  One can obtain a similar threshold for the monostable nonlinearity following the same argument of the proof.
 \end{remark}
 \begin{remark}
  The proof is based on finding a test function $v_\delta$ and on seeing for which choice of $\mu>0$ and $\delta>0$, we can guarantee that $J[v_\delta]<0$. Since $J[w\equiv 0]=0$ for all $\mu>0$ we have that for the previous choice of $\mu$, there will exist a nontrivial solution.
 \end{remark}
 We postpone the computation for the Appendix.
 
	For the bistable case, a barrier exists, has its maximum value above $\theta$
\begin{prop}[Maximum of positive solutions]\label{MaxPosSol}
            Let $u$ be a solution of \eqref{fineq} with $f$ being bistable then the maximum of $u$ in $\Omega$ is above $\theta$:
            \begin{equation*}
                \max_{x\in\Omega}u(x)>\theta.
            \end{equation*}

        \end{prop}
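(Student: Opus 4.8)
The plan is to argue by contradiction, exploiting the sign structure of the bistable nonlinearity together with the strong maximum principle. Suppose, to the contrary, that $\max_{x\in\overline\Omega} u(x) \le \theta$. Since $u$ solves \eqref{fineq}, it is strictly positive in $\Omega$ and vanishes on $\partial\Omega$; because $f\in C^1$ and $\partial\Omega$ is $C^2$, elliptic regularity makes $u$ smooth enough for the pointwise maximum principle to be applicable.

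First I would record the consequence of the bistable hypotheses on the range of $u$. Under the contradiction hypothesis we have $0 \le u(x) \le \theta$ for every $x\in\Omega$, and on this interval the bistable structure gives $f(s)\le 0$ (strictly negative for $0<s<\theta$, vanishing only at $s=0$ and $s=\theta$). Hence $-\mu\Delta u = f(u) \le 0$ throughout $\Omega$, i.e. $u$ is subharmonic.

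Then I would invoke the strong maximum principle. Since $u$ is continuous on the compact set $\overline\Omega$ it attains its maximum, and because $u>0$ in $\Omega$ while $u=0$ on $\partial\Omega$, this maximum is positive and is therefore attained at an \emph{interior} point. A nonconstant subharmonic function on a connected domain cannot attain its supremum at an interior point, so $u$ would have to be identically equal to a positive constant, contradicting the boundary condition $u=0$ on $\partial\Omega$. This contradiction forces $\max_{x\in\Omega} u(x) > \theta$.

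The argument is short, and the only point requiring genuine care is guaranteeing that $f(u)\le 0$ holds on all of $\Omega$ under the contradiction hypothesis; this is precisely where the bistable sign condition $f<0$ on $(0,\theta)$ enters. No information about the behaviour of $f$ beyond $\theta$, nor any bound on $\mu$ or on $\Omega$, is needed.
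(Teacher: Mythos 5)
Your proof is correct, but it takes a genuinely different route from the paper's. The paper also argues by contradiction, but via an energy identity rather than a pointwise one: multiplying the equation by $u$ and integrating by parts (the boundary term vanishes since $u=0$ on $\partial\Omega$) yields $0<\mu\int_\Omega|\nabla u|^2\,dx=\int_\Omega u f(u)\,dx<0$, where the left inequality uses that $u$ is nontrivial and the right one uses $f<0$ on $(0,\theta)$ together with the assumption $u\le\theta$. You replace this integral argument by the observation that, under the same assumption, $f(u)\le 0$ makes $u$ subharmonic, and then invoke the strong maximum principle to exclude a positive interior maximum of a nonconstant subharmonic function. The two approaches consume slightly different resources. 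The energy argument needs only $u\in H^1_0(\Omega)\cap L^\infty(\Omega)$ and one integration by parts; it requires no elliptic regularity and no connectedness of $\Omega$ (strict negativity of the right-hand side is automatic because $u$ lies strictly below $\theta$ in a neighbourhood of $\partial\Omega$). Your argument needs $u$ to be regular enough for the classical strong maximum principle — which elliptic regularity supplies, as you note — and, strictly speaking, should be applied on each connected component of $\Omega$ separately; this is a harmless fix, since $u$ would then be a positive constant on some component while vanishing on its boundary. In exchange, your proof transfers verbatim to situations where no variational structure is available (non-divergence-form or non-self-adjoint operators, or solutions understood in a viscosity sense), where the paper's energy identity would not even be well defined.
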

        \begin{proof}
         The proof follows by contradiction. Assume that the maximum of $u$ is lower or equal than $\theta$, then the energy estimate gives us the contradiction:
         \begin{equation*}
            0<\int_{\Omega} \mu| \nabla u|^2 dx=\int_{\Omega} uf(u)<0,
         \end{equation*}
         where the strict inequality in the left hand side comes from the assumption that the solution is nontrivial and the right hand side inequality comes from the fact that $f$ is negative in $(0,\theta)$.

        \end{proof}
        
        \begin{remark}
         Note that the fact that the maximum value of a nontrivial solution with boundary value $0$ is above $\theta$ implies that we cannot reach the steady-state $w\equiv 0$ asymptotically neither the steady-state $w\equiv \theta$ if we start with an initial value above this nontrivial solution.
        \end{remark}

 In the following proposition we show that there could exist a regime of $\mu$, $(\mu^*(\Omega,f),\mu^*_\theta(\Omega,f)]$ for which there is no barrier but the trivial control strategy of setting $a\equiv\theta$ for a long time plus local control might not work.
\begin{prop}[Order in the thresholds]
For $f$ bistable, when $F(1)>0$ we have that $\mu_\theta^*\geq \mu^*$.

\end{prop}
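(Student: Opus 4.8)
The plan is to prove the stronger, pointwise statement that \emph{every} diffusivity $\mu$ admitting a barrier (a solution of \eqref{nontrivial}) also admits a nontrivial solution of \eqref{nontrivial} with boundary datum $\theta$; taking suprema over all such $\mu$ then yields $\mu_\theta^*\geq\mu^*$ directly from Definition \ref{defpl}. The underlying mechanism is that near the upper equilibria the bistable $f$ behaves monostably, so a barrier can be ``folded down'' to the level $\theta$.

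First I would record the monostable structure of $f$ on $[\theta,1]$: setting $\tilde f(s):=f(s+\theta)$ for $s\in[0,1-\theta]$, one has $\tilde f(0)=\tilde f(1-\theta)=0$, $\tilde f'(0)=f'(\theta)>0$, $\tilde f'(1-\theta)=f'(1)<0$ and $\tilde f>0$ on $(0,1-\theta)$. Hence any solution $\tilde w$ of the monostable barrier problem $-\mu\Delta\tilde w=\tilde f(\tilde w)$ in $\Omega$, $\tilde w=0$ on $\partial\Omega$, $0<\tilde w<1-\theta$, translates via $w:=\tilde w+\theta$ into a nontrivial solution of \eqref{nontrivial} with boundary datum $\theta$ satisfying $\theta<w<1$ and $w\not\equiv\theta$. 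It therefore suffices to produce such a $\tilde w$ whenever a bistable barrier $v$ exists at the same $\mu$.

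The core step is to build a subsolution out of $v$ itself. Let $v$ solve \eqref{nontrivial}; by Proposition \ref{MaxPosSol} its maximum exceeds $\theta$, so $A:=\{x\in\Omega:v(x)>\theta\}$ is a nonempty open set. I claim that $\hat v:=(v-\theta)^+$ is a nontrivial weak subsolution of the monostable problem: indeed $\hat v\in H_0^1(\Omega)$ (as $v=0<\theta$ on $\partial\Omega$), $\hat v\not\equiv0$, $0\le\hat v\le1-\theta$, and $\tilde f(\hat v)=f(v)\,\mathbf{1}_A$. To verify $\mu\int_\Omega\nabla\hat v\cdot\nabla\varphi\le\int_\Omega\tilde f(\hat v)\varphi$ for every $0\le\varphi\in H_0^1(\Omega)\cap L^\infty(\Omega)$, I would test the equation satisfied by $v$ against $\zeta_\epsilon\varphi$, where $\zeta_\epsilon:=\min\{(v-\theta)^+/\epsilon,\,1\}$ is a Lipschitz cut-off: the resulting cross term $\mu\epsilon^{-1}\int_{\{0<v-\theta<\epsilon\}}\varphi\,|\nabla v|^2$ is nonnegative, and letting $\epsilon\to0$ (so that $\zeta_\epsilon\to\mathbf{1}_A$ boundedly and $\zeta_\epsilon\nabla v\to\nabla\hat v$ in $L^2$) delivers exactly the subsolution inequality. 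This Kato-type computation is the main technical obstacle; the remaining arguments are soft.

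Finally, the constant $1-\theta$ is a supersolution with $\hat v\le1-\theta$, so the monotone iteration (the sub/supersolution method applied to the globally Lipschitz nonlinearity $\tilde f$ with homogeneous boundary datum) yields a solution $\tilde w$ with $\hat v\le\tilde w\le1-\theta$. Since $\tilde w\ge\hat v\not\equiv0$ it is nontrivial, and the strong maximum principle (comparing $\tilde w$ with the constant solutions $0$ and $1-\theta$, using $\tilde f(0)=\tilde f(1-\theta)=0$) upgrades this to $0<\tilde w<1-\theta$ in $\Omega$. Then $w:=\tilde w+\theta$ is a nontrivial solution of \eqref{nontrivial} with boundary value $\theta$ at the diffusivity $\mu$. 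As $\mu$ ranges over all values for which a barrier exists, the associated boundary-$\theta$ solutions exist as well, whence $\mu_\theta^*\ge\mu^*$.
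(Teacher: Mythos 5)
Your proof is correct, but it takes a more technical route than the paper. The paper's argument is a one-liner with the same skeleton (sub/supersolution plus Proposition \ref{MaxPosSol}) but without your main technical step: since a subsolution only needs to lie \emph{below} the boundary datum rather than attain it, a barrier $v$ (which has $v=0\leq\theta$ on $\partial\Omega$ and solves the same PDE) is \emph{already} a subsolution of the boundary-$\theta$ problem as it stands; pairing it with the supersolution $w\equiv 1$ and applying the elliptic comparison/monotone iteration method gives a solution $w$ with $v\leq w\leq 1$ and trace $\theta$, and $\max_\Omega v>\theta$ (Proposition \ref{MaxPosSol}) forces $w\not\equiv\theta$. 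Your proof instead shifts by $\theta$ to the monostable nonlinearity $\tilde f(s)=f(s+\theta)$ (consistent with Remark \ref{boundstheta}) and manufactures a subsolution with \emph{homogeneous} boundary data by truncating, $\hat v=(v-\theta)^+$, which is exactly what creates the need for the Kato-type cut-off computation with $\zeta_\epsilon$; that computation is carried out correctly (the cross term has the right sign, and $\zeta_\epsilon\nabla v\to\nabla\hat v$ in $L^2$), and the rest of your argument (monotone iteration between $\hat v$ and $1-\theta$, strong maximum principle, translation back, and taking suprema of the two sets of admissible $\mu$ in Definition \ref{defpl}) is sound. What your route buys is a slightly stronger structural conclusion—the boundary-$\theta$ solution you build satisfies $w>\theta$ everywhere, and the reduction to the monostable problem could be reused to transfer quantitative bounds on monostable thresholds to $\mu_\theta^*$—but for the inequality $\mu_\theta^*\geq\mu^*$ itself the truncation step is unnecessary work that the paper's choice of subsolution avoids entirely.
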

\begin{proof}
 The result follows from the elliptic comparison principle \cite{HERVE} together with the fact that any nontrivial solution of the boundary value problem has its maximum above $\theta$.
\end{proof}

\begin{remark}[Bounds on $\mu_\theta^*$]\label{boundstheta}
 When we study nontrivial solutions for the problem \eqref{maineq} with Dirichlet boundary conditions equals to $\theta$, we can reduce our analysis to the study of a monostable nonlinearity, since after the change of variables $u=v-\theta$, the nonlinearity ends up being $f(v+\theta)$, which is monostable. The first nontrivial solution that appears is not going to change sign, because otherwise, such oscillating solution $w$ will have a positive (or negative) part which will be a subsolution (or a supersolution) of the problem by extending it with $0$ (in the new coordinates) outside the positive (or negative) part, leading to the existence of a nontrivial solution that does not change sign.
\end{remark}

\begin{remark}
 Note that the stability of the stationary solution $w\equiv \theta$ becomes more and more unstable as the diffusivity decreases. However, the number of eigenfunctions that are unstable is only finite.
\end{remark}

Now we turn our attention to the existence or nonexistence of
elliptic solutions with Dirichlet boundary $1$. The presence of traveling waves for the one-dimensional case has been widely studied \cite{KOLMOGOROV37,FIFEBOOK,PERTHAME}. One can easily see that the same functions constantly extended in the $N-1$ remaining space dimensions are also solutions of the Cauchy problem in $\mathbb{R}^N$. 

\begin{prop}[Traveling waves and convergence to  $w\equiv 1$ for any domain]\label{TWto1}
The existence of a decreasing traveling wave implies that for any initial admissible condition and every domain, the solution can be asymptotically driven towards $w\equiv 1$.

If $f$ is monostable or $f$ is bistable with $F(1)>0$ then there is a unique solution of the boundary value problem
         \begin{equation*}\label{feina1}
            \begin{dcases}
                - \mu\Delta u =f(u)\quad&\text{in }\Omega,\\
                {0\leq u\leq 1}\quad &\text{in }\Omega,\\
                u=1\quad&\text{in }\partial\Omega
            \end{dcases}
    \end{equation*}
and for any domain $\Omega$, we have that any initial admissible condition can be asymptotically driven towards the steady-state $w\equiv 1$. 
\end{prop}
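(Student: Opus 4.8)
The plan is to establish the general implication first---that the existence of a decreasing traveling wave forces convergence to $w\equiv 1$ under the boundary control $a\equiv 1$---and then to invoke the classical existence of such a wave in the monostable and bistable ($F(1)>0$) cases. Throughout I fix the control $a\equiv 1$, so that $w\equiv 1$ is a steady supersolution and $w\equiv 0$ a subsolution; by the comparison principle every admissible trajectory stays in $[0,1]$, and it suffices to treat the minimal trajectory $\underline u$ issued from $u_0\equiv 0$, since any admissible $u_0\geq 0$ dominates it and is dominated by $1$.

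First I would recall that a decreasing profile $\phi$ solving $\mu\phi''+c\phi'+f(\phi)=0$ with $\phi(-\infty)=1$, $\phi(+\infty)=0$ and $0<\phi<1$ gives an exact solution $U(x,t):=\phi(x_1-ct)$ of $u_t-\mu\Delta u=f(u)$ in all of $\mathbb{R}^N$, where $\phi$ is extended constantly in $x_2,\dots,x_N$. Multiplying the profile equation by $\phi'$ and integrating over $\mathbb{R}$ yields $c\int_{\mathbb{R}}(\phi')^2=F(1)$, so $F(1)>0$ forces $c>0$: the value-$1$ plateau advances in the $+x_1$ direction. In the monostable case the same conclusion holds, as decreasing fronts exist for every speed $c\geq c^*>0$.

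The heart of the argument is a wave-sweeping comparison. Since $\Omega$ is bounded, $x_1$ ranges over a bounded interval. Fix a small $t_1>0$; by the strong maximum principle $\underline u(\cdot,t_1)$ is continuous and strictly positive on the compact set $\overline\Omega$ (equal to $1$ on $\partial\Omega$), hence $\underline u(\cdot,t_1)\geq\varepsilon_1>0$. Because $\phi(+\infty)=0$, I can choose a shift $x_0$ so negative that the translated wave $w(x,t):=\phi\bigl(x_1-c(t-t_1)-x_0\bigr)$ satisfies $w(\cdot,t_1)\leq\varepsilon_1\leq\underline u(\cdot,t_1)$ on $\Omega$, while on $\partial\Omega\times[t_1,\infty)$ one has $w\leq 1=\underline u$. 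As $w$ is an exact solution, the comparison principle gives $\underline u(x,t)\geq\phi\bigl(x_1-c(t-t_1)-x_0\bigr)$ for all $t\geq t_1$. With $c>0$, the argument of $\phi$ tends to $-\infty$ as $t\to\infty$ at each fixed $x$, so $\underline u(x,t)\to 1$ pointwise; combined with $\underline u\leq 1$ this yields $\underline u\to 1$, and hence $u\to 1$ for every admissible $u_0$.

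Uniqueness of the steady state follows at once: if $\bar u$ solves the elliptic problem with boundary value $1$ and $0\leq\bar u\leq 1$, then $u(\cdot,t)\equiv\bar u$ is the unique parabolic trajectory with datum $\bar u$ and boundary data $1$, which by the previous step tends to $1$; being constant in time, it must equal $\bar u$, so $\bar u\equiv 1$. I expect the main obstacle to be the subsolution/initial-data mismatch---the wave is strictly positive while $u_0$ may vanish---which is exactly why the comparison is launched at a positive time $t_1$, using strong-maximum-principle positivity and compactness of $\overline\Omega$, and why the wave is first shifted deep into its vanishing tail before being allowed to sweep across the domain.
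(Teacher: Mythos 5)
Your proof is correct and follows essentially the same route as the paper's own proof: a decreasing traveling wave with positive speed is shifted into its vanishing tail, placed below the solution, and used as an exact subsolution that the comparison principle forces to sweep the bounded domain up to $w\equiv 1$. You are in fact slightly more careful than the paper, which assumes $0<u(x,0)<1$ and simply picks a wave profile below $u_0$ (a step that fails for data vanishing somewhere, e.g. $u_0\equiv 0$), whereas your device of waiting until a time $t_1>0$ and invoking strong-maximum-principle positivity repairs this; you also make explicit the positivity of the speed via $c\int_{\mathbb{R}}(\phi')^2=F(1)$ and the deduction of uniqueness of the steady state from the convergence, both of which the paper leaves implicit.
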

    
    \begin{proof}
     It is known that the problem:
     \begin{equation*}
       u_t-\mu\Delta u=f(u)\qquad (x,t)\in\mathbb{R}^N\times(0,+\infty),\\
     \end{equation*}
has a traveling wave solution for every $\mu$. Furthermore, the traveling wave profile takes values in $[0,1]$, and it is a monotone function decreasing in the direction of the velocity vector \cite{PERTHAME,FIFEBOOK}. The idea is to use a section of the traveling wave as a parabolic subsolution to our problem. 
Now we come back to our new (parabolic) problem;
     \begin{equation}\label{parfeina1}
            \begin{dcases}
                u_t-\mu\Delta u = f(u)\quad&(x,t)\in\Omega\times(0,T],\\
                u(x,t)=1\quad&(x,t)\in\partial\Omega\times(0,T],\\
                                0<u(x,0)<1 &x\in\Omega.
            \end{dcases}
    \end{equation}
     Since the traveling wave profile is monotone decreasing, we can consider a section of the traveling wave such that is below to $u(x,0)$ in $\Omega$, 
      let us denote by $TW(x)$ the maximum profile of traveling wave that satisfies:
     \begin{equation*}
      TW(x)\leq u(x,0)\quad\forall x\in \Omega. 
     \end{equation*}
     Now we note that the following problem:
     \begin{equation}\label{parfeina1comp}
            \begin{dcases}
                u_t- \mu\Delta u = f(u)\quad&(x,t)\in\Omega\times(0,T],\\
                u(x,t)=TW(x-c_{\mu}t)\quad&(x,t)\in\partial\Omega\times(0,T],\\
                                u(x,0)=TW(x) &x\in\Omega,
            \end{dcases}
    \end{equation}
    is a subsolution of \eqref{parfeina1} with $c_\mu>0$ for every $\mu>0$, then by the parabolic comparison principle we have that the solution of \eqref{parfeina1} will be above \eqref{parfeina1comp} and therefore the solution of \eqref{parfeina1} will converge to $w\equiv 1$.

    \end{proof}

\subsection{Radial solutions}
In this subsection, we discuss radial solutions of semilinear PDEs via ODE methods. The reason to do so is that the construction of the path towards $w\equiv \theta$ for the bistable case will rely on extending our domain $\Omega$ to a ball $\Omega\subset B_R$, construct the path for this ball and then restrict to our original domain $\Omega$.
Consider the following elliptic PDE:
\begin{equation}\label{pathmd}
\begin{dcases}
  -\mu\Delta u=  f(u)\qquad x\in B_r\subset \mathbb{R}^N,\\
  u(0)=a,\\
  Du(0)=0,
\end{dcases}
\end{equation}
where $f$ is globally Lipschitz. It is well known that the solutions of a semilinear elliptic equation in a ball are radially symmetric \cite{evans10}. We rewrite the \eqref{pathmd} in radial coordinates and absorbing $\mu$ in the nonlinearity the following is obtained:

\begin{equation}\label{radial}
\begin{dcases}
 u_{rr}(r)+\frac{N-1}{r}u_r(r)=-f(u(r))\qquad r\in [0,R_m),\\
u(0)=a,\\
  u_r(0)=0.
\end{dcases}
\end{equation}
The following Lemmas are devoted to the existence, uniqueness, continuous dependence with respect to parameters, and global definition of \eqref{radial} for certain ranges of $a$. The local existence and uniqueness follow from standard contraction argument \cite{CAZENAVE-SEMILINEARELL}.
\begin{lemma}[Local Existence and uniqueness]
 There exist a unique solution for $R_m$ small enough to \eqref{radial}
\end{lemma}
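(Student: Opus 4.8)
The plan is to reformulate \eqref{radial} as a fixed-point problem and apply the Banach contraction principle, since the singular coefficient $\frac{N-1}{r}$ at $r=0$ prevents a direct appeal to the Picard--Lindel\"of theorem. The key observation is that the radial Laplacian is in divergence form, so that the equation is equivalent to $(r^{N-1}u_r)'=-r^{N-1}f(u)$. Integrating once from $0$ to $r$ and using $u_r(0)=0$ together with the vanishing of $r^{N-1}$ at the origin gives $u_r(r)=-r^{1-N}\int_0^r s^{N-1}f(u(s))\,ds$, and integrating once more with $u(0)=a$ yields the integral equation
\begin{equation*}
 u(r)=a-\int_0^r t^{1-N}\left(\int_0^t s^{N-1}f(u(s))\,ds\right)dt=:(Tu)(r).
\end{equation*}

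First I would check that $T$ maps $C([0,R_m])$ into itself. For $u$ continuous, $f(u)$ is bounded (say by $M$), so the inner integral is bounded by $Mt^N/N$ and the integrand of the outer integral is bounded by $Mt/N$, which is integrable and vanishes at $t=0$; hence $Tu$ is well defined and continuous (indeed $C^1$). Because $f$ is globally Lipschitz with some constant $L$, I can work on the whole space $C([0,R_m])$ rather than on a ball. The contraction estimate is the heart of the argument: for $u,v\in C([0,R_m])$,
\begin{equation*}
 \left|\int_0^t s^{N-1}\bigl(f(u(s))-f(v(s))\bigr)\,ds\right|\leq L\|u-v\|_\infty\frac{t^N}{N},
\end{equation*}
and feeding this into the outer integral gives
\begin{equation*}
 \|Tu-Tv\|_\infty\leq\frac{L}{N}\int_0^{R_m}t\,dt\,\|u-v\|_\infty=\frac{LR_m^2}{2N}\|u-v\|_\infty.
\end{equation*}
Thus for $R_m$ small enough that $LR_m^2/(2N)<1$, the map $T$ is a contraction on the complete metric space $C([0,R_m])$ and admits a unique fixed point; uniqueness of the solution is then automatic from the contraction property.

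Finally I would verify that the fixed point is a genuine solution of \eqref{radial}. Setting $r=0$ in the integral equation gives $u(0)=a$; differentiating gives $u_r(r)=-r^{1-N}\int_0^r s^{N-1}f(u(s))\,ds$, which is bounded by $Mr/N$ and hence satisfies $u_r(0)=0$; a further differentiation recovers $(r^{N-1}u_r)'=-r^{N-1}f(u)$, i.e.\ the differential equation, for $r>0$. The main obstacle is precisely the singularity of the coefficient at the origin, which forbids the naive Cauchy--Lipschitz setup; once it is absorbed by the divergence-form reformulation through the weight $r^{N-1}$, the remaining estimates are routine and the smallness of $R_m$ enters only through the factor $R_m^2$ in the contraction constant.
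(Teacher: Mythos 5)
Your proof is correct and follows essentially the same route as the paper: recast \eqref{radial} in divergence form, integrate twice against the weight $r^{N-1}$, and run a Banach fixed-point argument on $C([0,R_m])$ using the global Lipschitz bound on $f$. In fact your write-up is slightly more careful than the paper's (correct sign in the integral equation, explicit contraction constant $LR_m^2/(2N)$, and verification that the fixed point satisfies the initial conditions), but the underlying argument is identical.
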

\begin{proof}
 First we proof the wellposedness of \eqref{radial}, since the term $\frac{1}{r}$ is not integrable. We proceed by multiplying by $r^{N-1}$ and integrating to obtain something of the form:
\begin{equation*}
 u(r)=a+\int_0^r \frac{1}{s^{N-1}} \int_0^s \sigma^{N-1} f(u(\sigma)) d\sigma ds
\end{equation*}
Now define the map:
\begin{equation*}
 Tu=a+\int_0^{R_m} \frac{1}{s^{N-1}} \int_0^s \sigma^{N-1} f(u(\sigma)) d\sigma ds
\end{equation*}
and we can show that it is a contraction for $R_m$ small enough:
\begin{align*}
        \|Tu-Tv\|_\infty&\leq \int_0^{r} \frac{1}{s^{N-1}} \int_0^s \sigma^{N-1} \|f(u(\sigma))-f(v(\sigma))\|_\infty d\sigma ds\\
        &\leq L\|u-v\|_\infty \int_0^{r} \frac{1}{s^{N-1}} \int_0^s  \sigma^{N-1}  d\sigma ds\\
        &\leq LR_m\|u-v\|_\infty
\end{align*}
where $L$ is the Lipschitz constant of $f$. Choosing $R_m$ small enough we have the contraction and hence the solution is unique.
\end{proof}

From now on we assume $f$ to be bistable with $F(1)\geq 0$. The following lemma is key for the admissibility of the path of steady-states.
\begin{lemma}[Invariant region]\label{Energy}
 Assume that $F(1)\geq 0$. Then there exists an admissible region in the phase space $\Gamma$ that is positively invariant.
\end{lemma}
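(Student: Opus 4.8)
The plan is to exhibit $\Gamma$ as a sublevel set of the natural mechanical energy associated with \eqref{radial}, exploiting the fact that the radial term acts as a dissipation. Define
\begin{equation*}
E(r) := \frac{1}{2}\,u_r(r)^2 + F(u(r)), \qquad F(s) = \int_0^s f(\sigma)\,d\sigma.
\end{equation*}
Differentiating along a solution of \eqref{radial} and substituting $u_{rr} = -\tfrac{N-1}{r}u_r - f(u)$ gives
\begin{equation*}
E'(r) = u_r\big(u_{rr} + f(u)\big) = -\frac{N-1}{r}\,u_r(r)^2 \le 0 \qquad (r>0).
\end{equation*}
Hence $E$ is non-increasing along every trajectory, so every sublevel set $\{E \le c\}$ is positively invariant for the forward-in-$r$ flow: if the state lies in $\{E\le c\}$ at some radius $r_0$, it remains there for all $r \ge r_0$. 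This monotonicity is the engine of the argument and replaces, for $N\ge 2$, the exact energy conservation available in the one-dimensional phase-plane picture.

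It remains to choose a level $c$ for which the sublevel set is admissible. Take $c=0$. From the bistable sign pattern we have $F(0)=0$, $F$ strictly decreasing on $(0,\theta)$ with $F(\theta)<0$, and $F$ strictly increasing on $(\theta,1)$; the hypothesis $F(1)\ge 0$ then yields a unique $a_0\in(\theta,1]$ with $F(a_0)=0$ (with $a_0=1$ precisely when $F(1)=0$). Consequently, within the strip $u\in[0,1]$,
\begin{equation*}
\Gamma := \{(u,u_r) : 0\le u\le 1,\ E(u,u_r)\le 0\} = \Big\{(u,u_r) : u\in[0,a_0],\ u_r^2 \le -2F(u)\Big\},
\end{equation*}
a closed, bounded ``lens'' pinched at the equilibria $(0,0)$ and $(a_0,0)$ and widest at $u=\theta$. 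By construction $\Gamma\subset\{u\in[0,1]\}$, so $\Gamma$ is admissible, and by the monotonicity of $E$ it is positively invariant. Note that $\Gamma$ contains the two target configurations $w\equiv 0$ (the point $(0,0)$) and $w\equiv\theta$ (the center $(\theta,0)$), so the trajectories that will later generate the path of steady-states indeed live inside $\Gamma$.

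The one point that requires care — and which I regard as the main obstacle — is ruling out escape through the degenerate boundary of the lens, where $\Gamma$ touches $u=a_0$. The invariance coming from $E'\le 0$ is transparent in the interior, but at the pinch point $(a_0,0)$ one must check that the solution cannot cross into $\{u>a_0\}$. This is immediate from the energy bound: any state with $u>a_0$ has $F(u)>0$, hence $E>0$, contradicting $E\le 0$; equivalently, at $(a_0,0)$ one computes $u_{rr}=-f(a_0)<0$ (since $a_0\in(\theta,1)$ when $F(1)>0$), so $u$ attains a local maximum and turns back, while in the degenerate case $a_0=1$ the point $(1,0)$ is itself an equilibrium. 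A secondary technicality is the $1/r$ singularity of $E'$ at $r=0$, but since $u_r(0)=0$ and $u_r(r)=O(r)$ near the origin by the local existence Lemma, $E'$ extends continuously by $0$ there and the monotonicity holds throughout $[0,R_m)$.
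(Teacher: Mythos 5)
Your proof is correct and takes essentially the same route as the paper: the energy $E=\tfrac{1}{2}u_r^2+F(u)$ is monotone decreasing because of the radial dissipation term $-\tfrac{N-1}{r}u_r^2$, and the admissible invariant region is exactly the zero-sublevel ``lens'' $\{u\in[0,\theta_1],\ u_r^2\le -2F(u)\}$, where $\theta_1$ (your $a_0$) is the first positive zero of $F$, which lies in $(\theta,1]$ precisely because $F(1)\ge 0$. Your extra care at the pinch point $(a_0,0)$ and at the $1/r$ singularity at $r=0$ only makes explicit details the paper's argument leaves implicit.
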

\begin{proof}
Let us consider the following energy:
 \begin{equation*}
 E(u,v)=\frac{1}{2}v^2+F(u)
\end{equation*}
where $F(u)=\int_0^u f(s)ds$ and $v=u_r$. 
Define the following region:
\begin{equation*}
 D:=\{(u,v)\in\mathbb{R}^2 \quad \text{such that}\quad E(u,v)\leq 0\}
\end{equation*}
Let $\theta_1$ be defined as:
\begin{equation*}
\theta_1=\min_{s>0}\{F(s)=0\}
\end{equation*}

Note that the region defined by 
\begin{equation*}
 \Gamma:=\{(u,v)\in[0,\theta_1]\times\mathbb{R}\quad\text{such that}\quad |v|\leq \sqrt{-2F(u)}\}
\end{equation*}
Note that $\Gamma\subset D$.

Take $(u_0,0)\in \Gamma$, then the solution of \eqref{radial} with initial
datum $(u_0,0)$ satisfies:

\begin{align*}
 \frac{d}{dr}E(u,v)=vv_r+f(u)v=-\frac{N-1}{r}v^2<0
\end{align*}
So $(u,v)\in\Gamma$ for all $r>0$.
\end{proof}

Therefore radial solutions are globally defined and admissible for any $(u_0,0)\in\Gamma$.

\begin{remark}[$(\theta,0)\in \Gamma$]
 Note that $F(\theta)<0$ hence $\sqrt{-2F(u)}$ is well defined.
\end{remark}

\begin{remark}[Stationary Traveling waves and the invariant region]
 If $F(1)=0$ we obtain a region that is defined from $(0,0)$
 up to $(1,0)$ corresponding to the constant stationary solution
 $u(x)=1$.
 The traveling waves  in the one dimensional case, the one satisfying:
 \begin{equation*}
  \lim_{x\to -\infty}TW(x)=1,\quad \lim_{x\to +\infty}TW(x)=0
 \end{equation*}
 and the symmetric one satisfying 
 \begin{equation*}
  \lim_{x\to -\infty}TW(x)=0,\quad\lim_{x\to +\infty}TW(x)=1,
 \end{equation*}
 in the case that $F(1)=0$ are stationary,
 and define the aforementioned invariant region in the phase plane.
\end{remark}

\begin{lemma}[Continuous dependence]\label{Continuity}
 The solution of the initial value problem 
 \begin{equation*}
  \begin{dcases}
   \frac{d}{dr}\begin{pmatrix}
                u\\
                u_r
               \end{pmatrix}
=\begin{pmatrix}
                              u_r\\
                              -\frac{N-1}{r}u_r-f(u)
                             \end{pmatrix},\\
                             \begin{pmatrix}
                              u(0)\\
                              u_r(0)
                             \end{pmatrix}=\begin{pmatrix}
                             a\\
                             0
                             \end{pmatrix},
  \end{dcases}
 \end{equation*}
for $r\in[0,R_m)$ is continuous with respect to the initial condition $a$.
\end{lemma}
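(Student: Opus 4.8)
The plan is to bypass the standard continuous-dependence theorem for ODEs, which does not apply directly because the vector field $(u,u_r)\mapsto\bigl(u_r,-\tfrac{N-1}{r}u_r-f(u)\bigr)$ is singular at $r=0$, and instead to argue from the integrated (Duhamel) formulation already exploited in the local existence lemma. Multiplying the radial equation by $r^{N-1}$ and integrating twice, using $u_r(0)=0$ so that $r^{N-1}u_r\to 0$ as $r\to 0$, produces a fixed-point identity of the form
\begin{equation*}
 u(r)=a-\int_0^r \frac{1}{s^{N-1}}\int_0^s \sigma^{N-1}f(u(\sigma))\,d\sigma\,ds,
\end{equation*}
in which the singular weight $s^{-(N-1)}$ is tamed by the inner factor $\sigma^{N-1}$. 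This is the representation on which the whole estimate rests.

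First I would fix two initial values $a$ and $b$ in the admissible range, so that by Lemma \ref{Energy} both solutions are defined on all of $[0,R_m)$, denote the corresponding solutions by $u_a$ and $u_b$, and subtract the two integral identities. Since $f$ is globally Lipschitz with constant $L$ and the sign of the kernel is irrelevant under absolute values, this yields
\begin{equation*}
 |u_a(r)-u_b(r)|\leq |a-b|+L\int_0^r \frac{1}{s^{N-1}}\int_0^s \sigma^{N-1}|u_a(\sigma)-u_b(\sigma)|\,d\sigma\,ds.
\end{equation*}
Setting $w(r):=\sup_{0\leq\rho\leq r}|u_a(\rho)-u_b(\rho)|$ and using $|u_a(\sigma)-u_b(\sigma)|\leq w(s)$ for $\sigma\leq s$, the inner integral is at most $w(s)\,s^{N}/N$, and the decisive cancellation $s^{-(N-1)}\cdot s^{N}/N = s/N$ removes the singularity, giving the clean bound
\begin{equation*}
 w(r)\leq |a-b|+\frac{L}{N}\int_0^r s\,w(s)\,ds.
\end{equation*}

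Finally I would invoke Gronwall's inequality with the integrable kernel $s\mapsto Ls/N$ to conclude
\begin{equation*}
 w(r)\leq |a-b|\exp\!\left(\frac{Lr^2}{2N}\right)\leq |a-b|\exp\!\left(\frac{LR_m^2}{2N}\right),
\end{equation*}
so that $u_a\to u_b$ uniformly on $[0,R_m)$ as $b\to a$; this in fact establishes Lipschitz dependence of the solution on $a$, which is stronger than the claimed continuity. The only genuine obstacle is the singularity of the coefficient $\tfrac{N-1}{r}$ at the origin, and the point of the argument is precisely that, once the problem is recast in the integrated form above, the weight cancellation reduces everything to a textbook Gronwall estimate.
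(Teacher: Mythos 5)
Your proof is correct and in fact establishes Lipschitz dependence, but it follows a genuinely different route from the paper's. The paper argues differentially: it sets $\xi(r)=u(r)^2+u_r(r)^2$, computes $\tfrac{d}{dr}\xi=2uu_r-2\tfrac{N-1}{r}u_r^2-2u_rf(u)$, discards the singular term thanks to its dissipative sign, bounds the nonlinearity via the Lipschitz constant to get $\xi'\leq(1+L)\xi$, and concludes by Gronwall; continuous dependence then follows by running the identical computation on the difference of two solutions, for which $|f(u_a)-f(u_b)|\leq L|u_a-u_b|$ (the paper writes the estimate for a single solution and leaves this transfer implicit). You instead work on the integrated Volterra identity, where the singularity is neutralized not by its sign but by the algebraic cancellation $s^{-(N-1)}\cdot s^{N}/N=s/N$, and Gronwall with kernel $Ls/N$ gives the explicit modulus $|a-b|e^{LR_m^2/(2N)}$. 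Each approach buys something: the paper's energy controls both components $u$ and $u_r$ simultaneously, whereas your sup-norm estimate controls only the $u$-component; since the lemma concerns the first-order system $(u,u_r)$, you should add the one-line remark that
\begin{equation*}
|u_{a,r}(r)-u_{b,r}(r)|=\Bigl|r^{-(N-1)}\int_0^r\sigma^{N-1}\bigl(f(u_a(\sigma))-f(u_b(\sigma))\bigr)\,d\sigma\Bigr|\leq\frac{Lr}{N}\,w(r),
\end{equation*}
so the derivative component inherits the same bound and the lemma is fully proved. Conversely, your argument estimates the difference of solutions directly (which is precisely what continuous dependence requires), stays at the level of the integral formulation already used for local existence, and yields an explicit Lipschitz constant on all of $[0,R_m)$ rather than only where the contraction argument applies; incidentally, your minus sign in front of the double integral is the correct one for $u_{rr}+\tfrac{N-1}{r}u_r=-f(u)$ (the paper's existence lemma displays a plus, a typo immaterial to both arguments).
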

\begin{proof}
 Note that $\xi(r)=u(r)^2+u_r(r)^2$ satisfies the following differential inequality:
 \begin{align*}
  \frac{d}{dr}\xi(r)&=2uu_r+2u_r\left(\frac{N-1}{r}u_r-f(u)\right)\\
  &\leq 2uu_r(1+L)-2\frac{N-1}{r}u_r^2\\
  &\leq (1+L)\xi(r)
 \end{align*}
 Applying Gromwall's inequality the result follows.

\end{proof}

\section{Proofs of Theorems and examples}\label{Sproofs}

    \subsection{Convergence to $w\equiv 1$ for monostable and bistable nonlinearities}
    The existence of a traveling wave for all $\mu>0$ with $c_\mu>0$ implies the convergence to $w\equiv 1$ for $a(x,t)=1$ for every $\mu>0$, Proposition \ref{TWto1}. The argument applies for the monostable case and the bistable case with $F(1)>0$.

    \subsection{Convergence to $w\equiv 0$ for monostable and bistable nonlinearities with $F(1)>0$.}
       Discussion on the existence of a nontrivial steady-states already done previously in Proposition \ref{upboundmu} implies that we cannot converge to $w\equiv 0$ for every $\mu>0$.
        For $\mu>\mu^*(\Omega,f)$, $w\equiv 0$ is the only stationary solution that takes values between $0$ and $1$. Therefore, in this case, both for the monostable or bistable case, $\mathcal{A}=L^\infty(\Omega,[0,1])$.
        
        For the monostable case, when $$\mu^*(\Omega,f)\geq \mu>\frac{f'(0)}{\lambda_1(\Omega)},$$ $\mathcal{A}\neq\{w\equiv 0\}$ and for initial data small enough one can still stabilize around $w\equiv 0$. However, for $\mu<f'(0)/\lambda_1(\Omega)$, $w\equiv 0$ is unstable and $\mathcal{A}=\{w\equiv 0\}$. In the critical case $\mu=f'(0)/\lambda_1(\Omega)$, the stability depends on the nonlinear terms arising from higher order Taylor expansions around $0$ of $f$.
        
        For the bistable case, when $\mu^*(\Omega,f)\geq \mu$, $\mathcal{A}\neq L^\infty(\Omega,[0,1])$. However, in contrast with the monostable case,  for any $\mu>0$, $\mathcal{A}\neq\{w\equiv 0\}$ since $f'(0)<0$.

    \subsection{Proof of the controllability to $w\equiv\theta$ in Theorem 1.2  ($F(1)>0$ case).}
    Here we prove points \ref{th4} and \ref{th5} of Theorem 1.2 (point \ref{th3} is a direct consequence of Proposition \ref{MaxPosSol}).
    
    The control strategy consists of two phases. First, to approach $w\equiv 0$ in a long time interval by keeping the control $a=0$, and then, by following the path of steady-states, to reach the desired target.
  
    First of all, in Claim \ref{constr}, we build for all $\mu>0$ a continuous path of admissible steady-states that connects $w\equiv0$ with $w\equiv \theta$.

                \begin{claim}[Construction of the path]\label{constr}
                For every $\mu>0$ and every $\Omega$ there exists an admissible continuous path of steady-states connecting $w\equiv 0$ and $w\equiv \theta$.
            \end{claim}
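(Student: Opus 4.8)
The plan is to realize the path through the fictitious-domain/radial-ODE machinery assembled in Section \ref{Tlema}. First I would fix any ball $B_R\supset\Omega$ and consider, for each center value $a\in[0,\theta]$, the radial solution $u_a$ of the reduced equation \eqref{radial} with $u_a(0)=a$ and $(u_a)_r(0)=0$; since the factor $\mu$ is absorbed into $f$, the construction is carried out uniformly in $\mu>0$, which is why the path will exist for every diffusivity. The two extreme parameter values give the constant equilibria: because $f(0)=f(\theta)=0$, the points $(0,0)$ and $(\theta,0)$ are rest points of the ODE, so $u_0\equiv 0$ and $u_\theta\equiv\theta$, and these will be the endpoints of the path. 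For $a\in(0,\theta)$ the initial phase-space point $(a,0)$ lies in the admissible positively invariant region $\Gamma$ of Lemma \ref{Energy} (indeed $a\in(0,\theta)\subset[0,\theta_1]$ and $E(a,0)=F(a)\le 0$ there), so by that lemma $u_a$ is globally defined and satisfies $0\le u_a\le\theta_1<1$. Hence every $u_a$ is an admissible radial steady-state on $B_R$.

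Next I would transfer this family to $\Omega$ by restriction. Since each $u_a$ solves $-\mu\Delta u=f(u)$ on the larger ball $B_R\supset\Omega$, its restriction $u_a|_\Omega$ solves the same elliptic equation on $\Omega$ and is therefore a steady-state of \eqref{maineq} with its own (radial, hence in general non-constant) Dirichlet trace on $\partial\Omega$, taking values in $[0,\theta_1]\subset[0,1]$. Setting $\gamma(s):=u_{s\theta}|_\Omega$ for $s\in[0,1]$ then produces a one-parameter family of admissible steady-states with $\gamma(0)=0$ and $\gamma(1)=\theta$.

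It remains to verify continuity of $s\mapsto\gamma(s)$ in $L^\infty(\Omega)$. This follows from the continuous dependence on the center value $a$ (Lemma \ref{Continuity}): the Gronwall estimate there yields continuity of $a\mapsto u_a$ on every compact $r$-interval, and since all trajectories stay in $\Gamma$ they are global, so the estimate applies uniformly on $[0,R]$; continuity at the endpoints $a=0,\theta$ is included, since these are simply the limits of $u_a$ to the constant equilibria. As restriction to $\Omega$ is $L^\infty$-continuous, $\gamma$ is a continuous admissible path connecting $w\equiv 0$ and $w\equiv\theta$, which is exactly the claim.

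I do not expect a serious obstacle, since the three ingredients — the invariant region (admissibility and global existence), continuous dependence (continuity), and the trivial restriction step — are already established. The only point needing care is the behavior near the two endpoints, where the non-constant solutions $u_a$ degenerate into the constants $0$ and $\theta$: one must confirm that continuous dependence persists up to these boundary parameter values. For the subsequent application of the staircase method (Theorem \ref{DTHM}) there is an additional, genuinely relevant subtlety, namely that the hypotheses require traces strictly inside $(\nu,1-\nu)$, whereas the endpoint $w\equiv 0$ has trace $0$. The upper bound $u_a\le\theta_1<1$ is automatic, and along any sub-path bounded away from $a=0$ the positive traces stay uniformly above some $\nu>0$ by compactness of $[\varepsilon,\theta]$; the trace-$0$ endpoint is reconciled in the main proof by first driving the state close to $w\equiv 0$ with the free control $a=0$ before switching to the staircase.
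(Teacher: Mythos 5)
Your proposal is correct and follows essentially the same route as the paper's own proof: extend $\Omega$ to a ball $B_R$, generate the family of radial steady-states from the ODE \eqref{radial} with center values $a\in[0,\theta]$, invoke Lemma \ref{Energy} for admissibility and global existence and Lemma \ref{Continuity} for continuity, and then restrict to $\Omega$. Your additional observations --- that the endpoints are rest points of the ODE, and that the trace-$0$ endpoint must be reconciled with the staircase method's hypothesis by a preliminary stabilization phase with $a=0$ --- are accurate and match how the paper handles these points in the surrounding argument.
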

            \begin{proof}
                1. Since our domain $\Omega$ is bounded we can find a ball with a big enough radius such that $\Omega\subset B_R$ Figure \ref{Eball}.
                \begin{figure}
                    \begin{center}

                \includegraphics[scale=0.3]{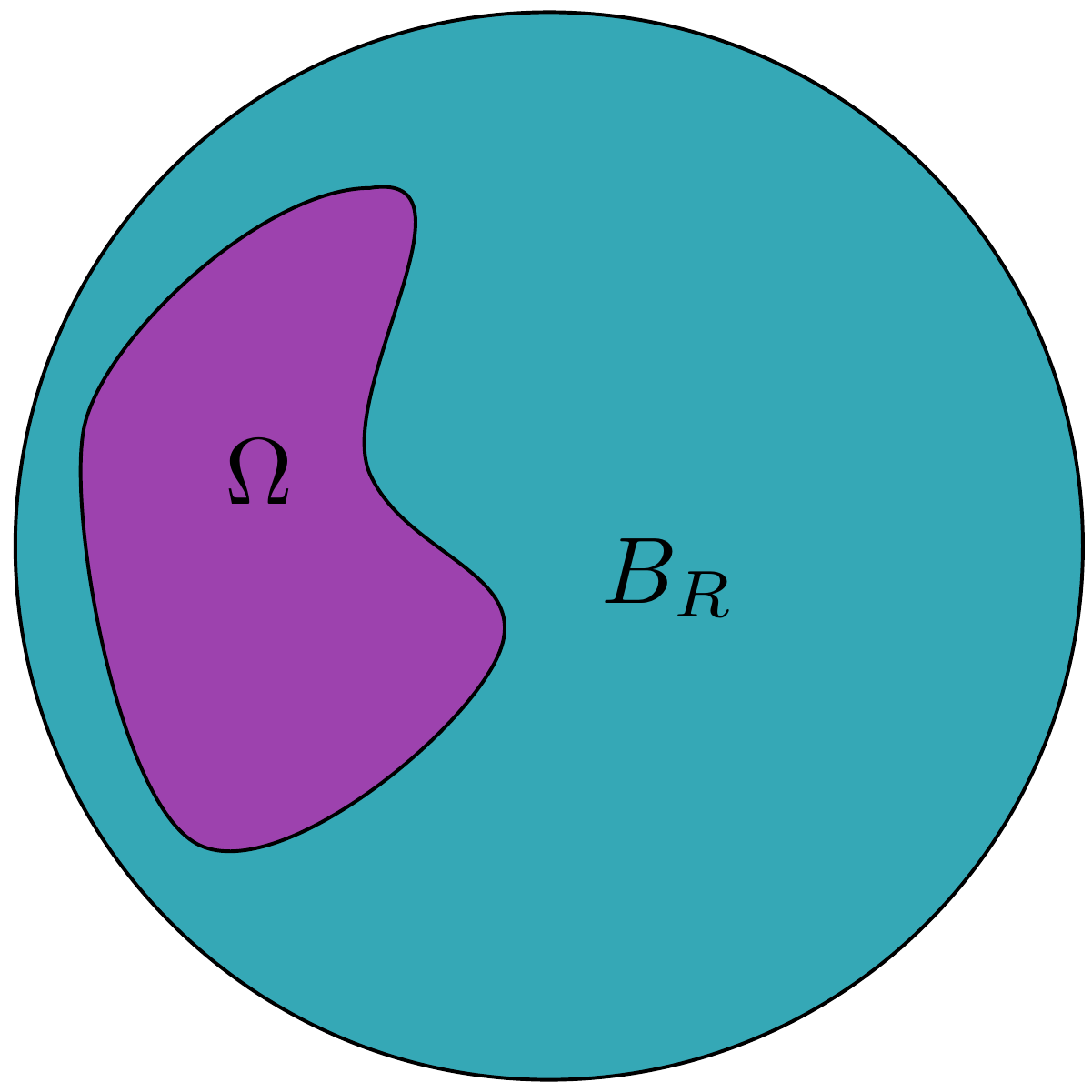}
                \caption{Domain $\Omega$ and a ball containing it where the path
                of steady-states will be constructed.}\label{Eball}
                \end{center}
                \end{figure}

                2. Construction of the path of steady-states on $B_R$:

                Let $R>0$ be an arbitrary positive number. Consider the one parameter family of solutions of the following Cauchy problem where $\mu$ has been absorbed by the nonlinearity:
                \begin{equation*}
                    \begin{dcases}
                    u^{(a)}_{rr}(r)+\frac{N-1}{r}u^{(a)}_r(r)=-f(u^{(a)}(r))\qquad r\in [0,R],\\
                    u^{(a)}(0)=a\in[0,\theta],\\
                    u^{(a)}_r(0)=0.  
                    \end{dcases}
                \end{equation*}
                Applying Lemma \ref{Energy} we obtain that the solutions are globally defined and applying Lemma \ref{Continuity} we obtain the continuous dependence with respect to initial data which implies the continuity of the path.

                Figure \ref{FigPhRad} shows the admissible invariant region and the connected path of steady-states connecting $w\equiv 0$ to $w\equiv \theta$.

                \begin{figure}
                    \begin{center}
                    \includegraphics[scale=0.5]{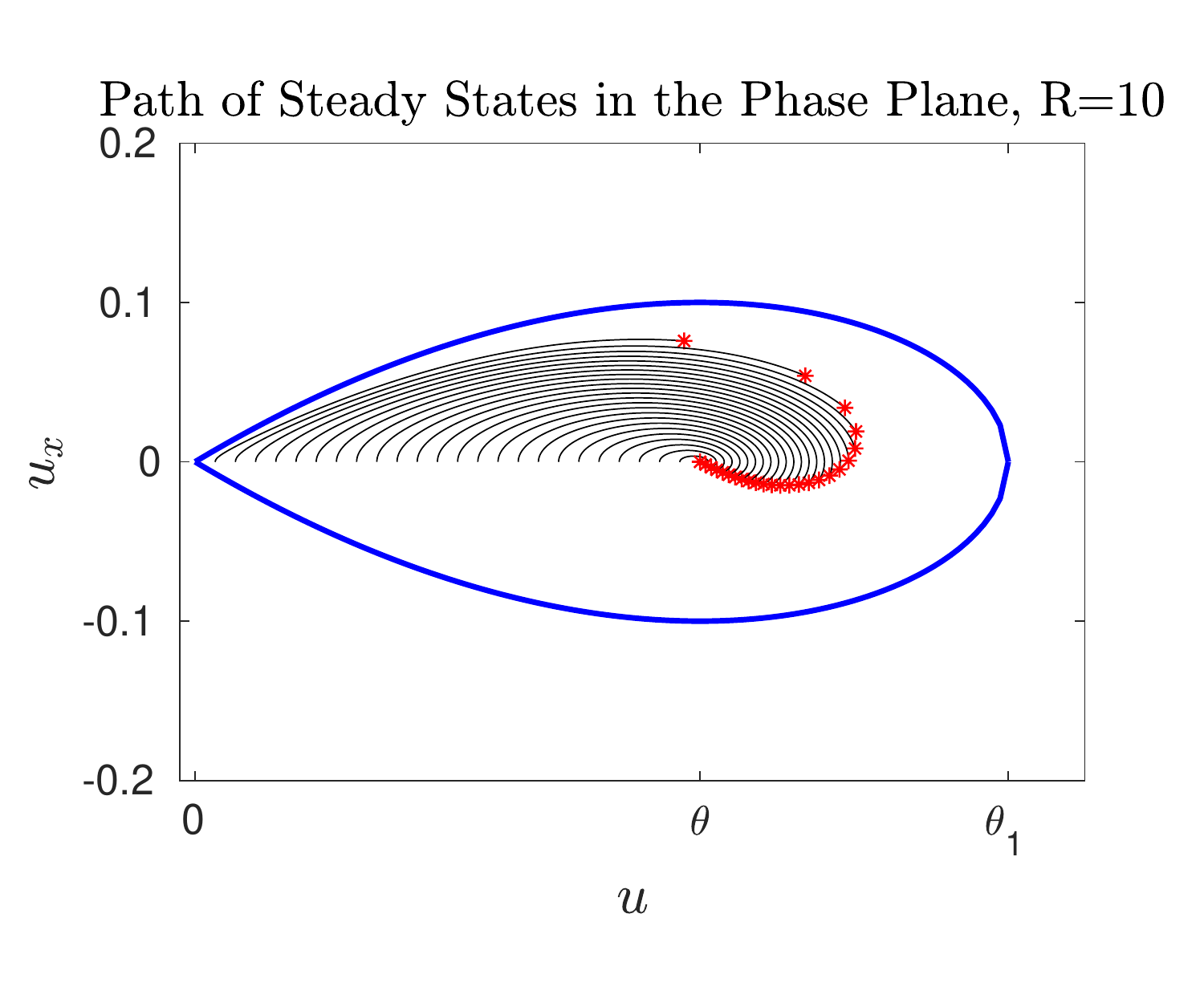}
                    \includegraphics[scale=0.5]{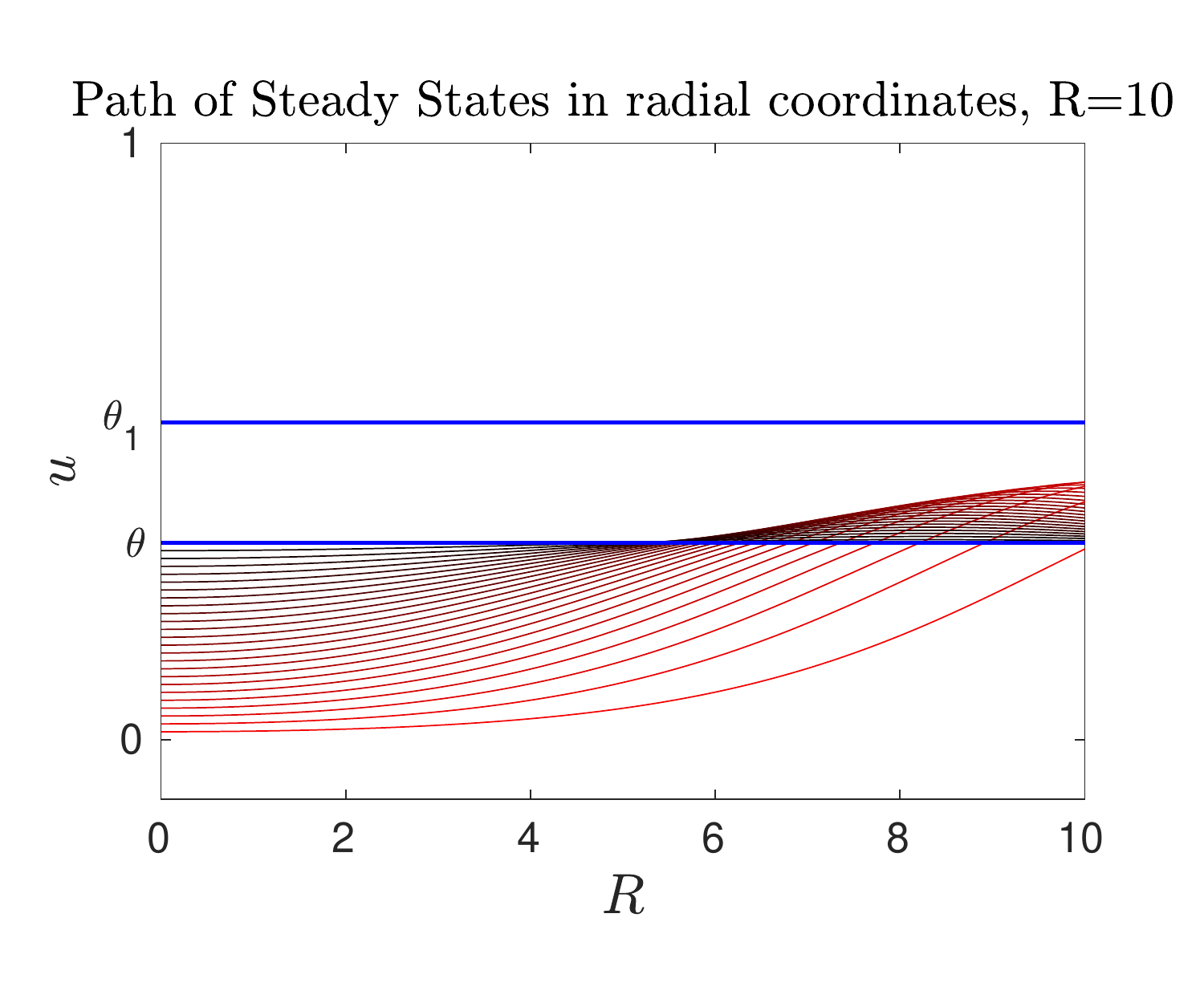}
                    \caption{(Left) in blue the invariant region in the phase space, in black the radial trajectories forming the continuous path of steady-states where the red stars indicate the condition in the boundary. (Right) the corresponding continuous family of steady-states connecting to the stationary solution, $f(s)=s(1-s)(s-1/3)$, seen in radials for $R=10$ and $N=2$.}\label{FigPhRad}
                    \end{center}
                \end{figure}

                3. Restriction to the original domain $\Omega$. Once we have the path of steady-states for any ball in $\mathbb{R}^N$, we restrict our family of steady-states on $\Omega$.

                In this way we obtain the path for any $\Omega$ bounded and any $\mu$.
            \end{proof}
    
    Claim \ref{constr} has constructed a path from $w\equiv 0$ to $w\equiv \theta$, now the concern is how can we control to some part of this path provided that the initial data $u_0\in\mathcal{A}$. In Claim \ref{Zorn}, we proceed to check that the minimal solutions under certain boundary conditions belong to the path of steady-states:
            
            \begin{claim}\label{Zorn}
            Consider a ball $B_R(x_0)$ such that $\Omega\subset B_R(x_0)$. Consider the following problem on $B_R(x_0)$:
                \begin{equation}\label{claimRAD}
                \begin{dcases}
                -\mu\Delta u=f(u)\quad&x\in B_R(x_0),\\
                u=\epsilon\quad&x\in\partial B_R(x_0),\\
                0<u<\epsilon\quad &x\in B_R(x_0).
            \end{dcases}
            \end{equation}
            Then:
            \begin{itemize}
            \item[a)] the problem \eqref{claimRAD} has a minimal solution.
            \item[b)] Let $a(x)$ be the restriction of such minimal solution to $\Omega$, then: The a minimal solution of:
            \begin{equation}\label{restr}
            \begin{dcases}
                -\mu\Delta u=f(u)\quad&x\in\Omega,\\
                u=a(x)\quad&x\in\partial\Omega,\\
                0<u<1\quad &x\in\Omega,
            \end{dcases}
            \end{equation}
                is radial with respect to $x_0$.
            \end{itemize}
            \end{claim}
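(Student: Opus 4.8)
The plan is to obtain both the ball solution and the $\Omega$-solution by the method of monotone iterations, and then to compare the two iteration schemes.

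For part (a), I would first note that, $f$ being bistable with $f(s)<0$ on $(0,\theta)$, a boundary value $0<\epsilon\le\theta$ gives $f(\epsilon)\le 0$, so the constant $\epsilon$ is a supersolution of \eqref{claimRAD} while $w\equiv 0$ is a subsolution lying below the boundary datum. Fixing $M>0$ so that $s\mapsto f(s)+Ms$ is nondecreasing on $[0,\epsilon]$ and writing $L:=-\mu\Delta+M$, I would set $v_0\equiv 0$ and iterate
\begin{equation*}
 Lv_{n+1}=f(v_n)+Mv_n \ \text{ in } B_R(x_0),\qquad v_{n+1}=\epsilon \ \text{ on } \partial B_R(x_0).
\end{equation*}
The maximum principle yields $0=v_0\le v_1\le\cdots\le\epsilon$; the uniform bound together with elliptic regularity makes the monotone limit $\underline v:=\lim_n v_n$ a classical solution, and the strong maximum principle gives $0<\underline v<\epsilon$. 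Minimality is the standard comparison: any solution $u$ with $0<u<1$ is a supersolution above $v_0$, so $v_n\le u$ for all $n$, whence $\underline v\le u$. Since each $v_n$ solves a rotationally invariant linear problem on the ball with constant boundary data, each $v_n$, and therefore $\underline v$, is radial about $x_0$.

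For part (b), set $a:=\underline v|_{\partial\Omega}$. The restriction $\underline v|_\Omega$ solves \eqref{restr} and satisfies $0<\underline v|_\Omega<1$, so it is an admissible solution, and the minimal solution $m$ of \eqref{restr}—which exists by the same monotone scheme as in part (a), now using that $\epsilon\ge a$ on $\partial\Omega$—obeys $m\le \underline v|_\Omega$. The crux is the reverse inequality $\underline v|_\Omega\le m$, for then $m=\underline v|_\Omega$ is radial about $x_0$, which is exactly the assertion. To get it I would run the analogous iteration on $\Omega$ with the same $M$ and $L$: put $u_0\equiv 0$ and
\begin{equation*}
 Lu_{n+1}=f(u_n)+Mu_n \ \text{ in } \Omega,\qquad u_{n+1}=a \ \text{ on } \partial\Omega,
\end{equation*}
so that $u_n\uparrow m$, and then prove $v_n|_\Omega\le u_n$ for all $n$ by induction, the base case being $v_0=u_0=0$.

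The inductive step is where the main difficulty lies, because the two problems live on different domains and carry different Dirichlet data. The key observation that unlocks it is that the ball iterates increase to their limit, so $v_n\le\underline v$ on all of $B_R(x_0)$ and in particular the traces satisfy $v_{n+1}|_{\partial\Omega}\le\underline v|_{\partial\Omega}=a$. Granting $v_n|_\Omega\le u_n$, monotonicity of $s\mapsto f(s)+Ms$ gives $L(u_{n+1}-v_{n+1})=\bigl(f(u_n)+Mu_n\bigr)-\bigl(f(v_n)+Mv_n\bigr)\ge 0$ on $\Omega$, with boundary trace $a-v_{n+1}|_{\partial\Omega}\ge 0$; the maximum principle then yields $u_{n+1}\ge v_{n+1}|_\Omega$. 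Passing to the limit gives $m\ge\underline v|_\Omega$, closing the argument. An equivalent but messier route would glue $m$ on $\overline\Omega$ to $\underline v$ on $B_R(x_0)\setminus\overline\Omega$ and verify, through the sign of the normal-derivative jump on $\partial\Omega$, that the result is a ball supersolution below $\underline v$; the iteration comparison is preferable precisely because it sidesteps this interface term.
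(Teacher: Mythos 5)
Your proof is correct, but it takes a genuinely different route from the paper's. The paper argues abstractly: for part (a) it gets existence from the sub/supersolution pair $\underline u\equiv 0$, $\overline u\equiv\epsilon$, and gets minimality from the lattice trick --- if two solutions cross, their pointwise minimum is a supersolution, so a further solution exists below both --- combined with Zorn's Lemma; part (b) is then a one-line contradiction: if the minimal solution $v$ of \eqref{restr} were not the restriction of the minimal ball solution $u$, then $\psi=\min\{u,v\}$ would be a supersolution of \eqref{claimRAD} lying somewhere strictly below $u$, contradicting the minimality of $u$. You instead make everything constructive through the monotone (Sattinger) iteration: iterating from the subsolution $0$ produces the minimal ball solution directly, avoids Zorn's Lemma, and makes its radial symmetry automatic (each linear iterate is rotation-invariant), a point the paper leaves implicit; and your part (b) replaces the min-of-supersolutions contradiction by an induction comparing the $\Omega$-iterates with the restricted ball iterates, where the key trace inequality $v_{n+1}|_{\partial\Omega}\le a$ falls out of the monotone convergence of the ball scheme. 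Your closing remark about gluing is, in essence, the paper's argument; the pointwise minimum sidesteps the normal-derivative jump precisely because the minimum of two supersolutions is a supersolution (in the weak sense), so neither route is actually blocked there. One small repair to your write-up: for $m$ to be minimal among \emph{all} solutions of \eqref{restr}, whose values may exceed $\epsilon$, you should fix $M$ so that $s\mapsto f(s)+Ms$ is nondecreasing on all of $[0,1]$ (e.g.\ $M$ a Lipschitz constant of $f$ on $[0,1]$) rather than only on $[0,\epsilon]$; this costs nothing in part (a) and makes your comparison induction against an arbitrary solution of \eqref{restr} valid verbatim.
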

            \begin{proof}
            \textcolor{white}{.}\newline

            \begin{itemize}
            \item[a)] The existence of a solution of \eqref{claimRAD} follows from sub and supersolutions, taking $\underline{u}=0$ and $\overline{u}=\epsilon$ respectively.
            
            The existence of a minimal solution is proved noticing that if two solutions $u_1$ and $u_2$ of \eqref{claimRAD} cross $\phi(x)=\min\{u_1(x),u_2(x)\}$ is a supersolution of \eqref{claimRAD}, since $\underline{u}=0$ is always a subsolution we have the existence of $u$ a radial solution below $u_1$ and $u_2$.
            A minimal solution $u$ of \eqref{claimRAD} exists by the Zorn's Lemma.
            \item[b)] By contradiction, assume that the minimal solution $v$ of \eqref{restr},  is not the restriction of the minimal solution $u$ of \eqref{claimRAD} then $\psi(x)=\min\{u(x),v(x)\}$ is a supersolution for \eqref{claimRAD} and this will contradict the fact that $u$ is minimal.
            \end{itemize}
            \end{proof}
            
            Since $u_0\in\mathcal{A}$, it exists a time $t_1$ (depending on $u_0$) such that the solution at $t_1$ is below the minimal solution $v$ of Claim \ref{Zorn}, $u(t_1)<v$.
            
            At this point, we set boundary $a_v(x)$ corresponding to the boundary value of the minimal solution of \ref{Zorn}.We will approach the set of steady-states with boundary value $a_v(x)$. Since there exists a minimal solution, by the parabolic comparison principle, we will converge to it. There exists a time $t_2>t_1$ such that the solution $u(t_2)$ will be very close to $v$ in the $C_0(\Omega)\cap H^1_0(\Omega)$ norm, then we apply controllability to attach this minimal solution. The results in \cite[Lemma 8.3]{DARIO} ensure that we do not violate the constraints in this process. The application of the staircase method in the path of Claim \ref{constr} ensures that we can reach $w\equiv \theta$ in finite time.
            
            We summarize the control strategy:
            \begin{itemize}
            \item[-] $a(x,t)=0$ from $t\in[0,t_1]$, where $t_1$ is the time needed until the solution $u$ is below a minimal solution $v$ that belongs to the admissible path.
            \item[-] $a(x,t)=a_v(x)$ from $t\in(t_1,t_2]$, where $a_v(x)$ is as Claim \ref{Zorn} and where $t_2$ is the time needed until the solution $u$ is close enough to the minimal solution $v$ for applying local controllability without violating the constraints.
            \item[-] $a(x,t)$ resulting from local controllability to $v$.
            \item[-] Application of the Staircase method.
            \end{itemize}
            
            Up to here, we have proved the controllability to $w\equiv\theta$ for any $\mu>0$ provided that $u_0\in\mathcal{A}$. Remind that, when $\mu>\mu^*_\theta(\Omega,f)$ the trivial strategy of setting $a=\theta$ plus local controllability also works.
            
            Now we turn our attention to the analysis of the needed controllability times. We start with the case $\mu>\mu^*(\Omega,f)$ (or $\mathcal{A}=L^\infty(\Omega,[0,1])$). In this case, by following the strategy presented above and the comparison principle, the initial datum $u_0\equiv 1$ will give us an upper bound of the controllability time for every $u_0\in L^\infty(\Omega,[0,1])$ to $w\equiv \theta$.

            Here we shall prove the non-uniformity of the control time $T^*_{u_0}$ when $0<\mu\leq\mu^*(\Omega,f)$.
            
            Let us denote by $v_{\min}$ a minimal barrier with respect to the $L^\infty$-norm. Note that, because of the maximum principle, there does not exist any other barrier that is below $v_{\min}$, (it cannot exists a barrier $v$ such that $v_{\min}\leq v>0$).
            The following important claim proves that there exists a perturbation of any $v_{\min}$ such that the corresponding solution converges to $w\equiv 0$ as $t\to\infty$.
            
            \begin{claim}
              Any $v_{\min}$ is not stable and there is a perturbation of $v_{\min}$ that asymptotically goes to $w\equiv 0$.
            \end{claim}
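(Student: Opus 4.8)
The plan is to linearise the $a\equiv 0$ dynamics around $v_{\min}$ and to reduce the whole statement to the sign of a single eigenvalue. Let $L:=-\mu\Delta-f'(v_{\min})$ with homogeneous Dirichlet conditions, and let $(\lambda_1,\phi_1)$ be its principal eigenpair, normalised so that $\phi_1>0$ in $\Omega$. I claim it suffices to prove $\lambda_1<0$. Indeed, once this is known I would take as perturbation $\overline{w}_s:=v_{\min}-s\phi_1$: for $s>0$ small it is strictly positive in $\Omega$ (in the interior because $v_{\min}$ is bounded away from $0$ on compact subsets, and near $\partial\Omega$ because the Hopf lemma gives $\partial_\nu v_{\min}<0$ and $\partial_\nu\phi_1<0$, so the quotient $v_{\min}/\phi_1$ stays bounded below), it vanishes on $\partial\Omega$, and it lies strictly below $v_{\min}$. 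A first-order expansion using $L\phi_1=\lambda_1\phi_1$ gives
\begin{equation*}
-\mu\Delta \overline{w}_s-f(\overline{w}_s)=-s\lambda_1\phi_1+o(s),
\end{equation*}
so that, since $\lambda_1<0$ and $\phi_1>0$, the right-hand side is nonnegative for $s$ small and $\overline{w}_s$ is a stationary supersolution. Hence the trajectory $u(t;0,\overline{w}_s)$ is monotone nonincreasing in $t$, remains in $[0,\overline{w}_s]\subset[0,v_{\min})$, and, being monotone and bounded, converges to a steady state $v_\infty\le\overline{w}_s<v_{\min}$. Since $v_{\min}$ is a minimal barrier there is no barrier strictly below it, so necessarily $v_\infty\equiv 0$. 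Thus $\overline{w}_s\in\mathcal{A}$, which is simultaneously the desired perturbation converging to $w\equiv0$ and a witness of the instability of $v_{\min}$; letting $s\to0^+$ also produces the sequence in $\mathcal{A}$ approaching $v_{\min}$ that is needed for the non-uniform-time argument.

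It remains to establish the key inequality $\lambda_1<0$, and this is where I expect the genuine difficulty. The idea is to exclude that $v_{\min}$ be stable, equivalently that it be a local minimiser of the Lyapunov functional $J[u]=\int_\Omega\frac{\mu}{2}|\nabla u|^2-F(u)\,dx$. Because $f'(0)<0$, the state $w\equiv0$ is a strict local minimiser of $J$ in $C_0(\overline{\Omega})$; moreover $v_{\min}$ is an equilibrium lying above it, with $0$ a subsolution and $v_{\min}$ a solution (hence a supersolution), so the order interval $[0,v_{\min}]$ is positively invariant under the $a\equiv0$ semiflow. Suppose, for contradiction, that $\lambda_1>0$. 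Then $v_{\min}$ is a strict local minimiser of $J$ as well, so the gradient semiflow restricted to the invariant interval $[0,v_{\min}]$ would possess two ordered strict local minimisers, $0$ and $v_{\min}$, with nothing between them. A mountain-pass argument carried out inside this order interval — exploiting the invariance of $[0,v_{\min}]$ and the strict-minimum character of the two endpoints — then yields a third equilibrium $\tilde v$ with $0<\tilde v<v_{\min}$. But $\tilde v$ is a barrier, and $\tilde v<v_{\min}$ pointwise forces $\|\tilde v\|_{L^\infty}<\|v_{\min}\|_{L^\infty}$, contradicting the minimality of $v_{\min}$ in the $L^\infty$-norm. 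Hence $\lambda_1\le 0$.

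The delicate point above is to guarantee that the mountain-pass critical point genuinely lies in the open order interval $(0,v_{\min})$ rather than somewhere unrelated, since this is exactly what makes the contradiction with minimality effective. I would secure it by working with the semiflow restricted to the positively invariant set $[0,v_{\min}]$, where compactness and a priori bounds in $C_0(\overline{\Omega})$ are automatic, and by invoking the order-interval version of the mountain-pass theorem between two strict local minima. The only remaining loose end is the borderline degenerate case $\lambda_1=0$, which the previous argument does not cover; this can occur only at isolated values of $\mu$ (fold points, essentially $\mu=\mu^*(\Omega,f)$). For $\mu<\mu^*(\Omega,f)$ the minimal barrier sits on the lower solution branch, on which $\lambda_1<0$ strictly, so that $\lambda_1=0$ is excluded and the supersolution construction of the first paragraph applies verbatim; the degenerate threshold $\mu=\mu^*(\Omega,f)$, if needed, is treated directly by a centre-manifold reduction showing that the perturbation still drifts downward to $w\equiv0$.

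In summary, the whole claim splits into a soft part (the monotone supersolution $\overline{w}_s=v_{\min}-s\phi_1$ is trapped in $[0,v_{\min})$ and, by minimality, must relax to $w\equiv0$) and a hard part (the strict instability $\lambda_1<0$). The hard part is the crux, and I would prove it by ruling out stability through an order-interval mountain-pass argument that, were $v_{\min}$ stable, would manufacture a barrier strictly below it and thereby violate its $L^\infty$-minimality.
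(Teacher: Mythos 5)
Your first paragraph (the supersolution $\overline{w}_s = v_{\min} - s\phi_1$, its monotone relaxation, and the use of $L^\infty$-minimality to force the limit to be $w\equiv 0$) is sound, and your order-interval mountain-pass argument does yield $\lambda_1 \le 0$: a critical point strictly between $0$ and $v_{\min}$ would be a barrier with strictly smaller sup-norm. The genuine gap is the degenerate case $\lambda_1 = 0$, which your construction cannot survive: when $\lambda_1 = 0$ the expansion $-\mu\Delta\overline{w}_s - f(\overline{w}_s) = -s\lambda_1\phi_1 + o(s)$ loses all sign information, since the leading term vanishes and the remainder (with undetermined sign, and not even quadratically estimable when $f$ is only $C^1$) decides whether $\overline{w}_s$ is a super- or subsolution. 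None of the devices you invoke to dismiss this case is justified: for a general $C^2$ domain and a general bistable $f$ there is no established branch structure of barriers, no proof that degeneracy occurs only at isolated values of $\mu$ (that would require analyticity or a genericity argument, and ``isolated'' would not suffice anyway, because the claim feeds into point 5 of Theorem 1.2, which must hold for every $\mu\in(0,\mu^*(\Omega,f)]$, in particular at $\mu=\mu^*(\Omega,f)$, exactly where a fold degeneracy is expected), and no proof that the $L^\infty$-minimal barrier has $\lambda_1<0$ strictly. The centre-manifold remark is not a proof either: at a degenerate equilibrium the drift on the centre manifold is governed by higher-order coefficients whose sign is unknown; a priori $v_{\min}$ could be semistable and attract from below, which is the very scenario the claim must exclude, so the downward drift cannot be asserted locally — it has to be derived from global information.

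This is precisely where the paper's proof differs and why it closes the case you leave open: it never linearizes. It considers the ordered family $v_\lambda(0)=\max\{v_{\min}-(1-\lambda)\|v_{\min}\|_\infty,\,0\}$, $\lambda\in[0,1]$, and classifies the possible $\omega$-limits using only the comparison principle, the gradient (Lyapunov) structure, the openness of the set $S_0$ of parameters whose trajectories converge to $w\equiv0$, and $L^\infty$-minimality: an intermediate $\omega$-limit would produce a barrier strictly below $v_{\min}$ (impossible by minimality), and a nontrivial interval $S_{v_{\min}}=[\lambda^*,1]$ with $\lambda^*<1$ of parameters attracted to $v_{\min}$ is ruled out because trajectories from $S_0$ passing arbitrarily close to $v_{\min}$ would then dominate, pointwise, trajectories converging to $v_{\min}$, violating the maximum principle. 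The only consistent configuration is $S_0=[0,1)$, which is the claim. That argument is completely insensitive to degeneracy of the linearization; if you wish to keep the spectral route, you would still need an argument of this global, order-theoretic type to settle $\lambda_1=0$, at which point the eigenvalue analysis becomes redundant.
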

            \begin{proof}
             For $\lambda\in[0,1]$ we consider the problem:
             \begin{equation}
              \begin{cases}
               \partial_tv_\lambda-\mu\Delta v_\lambda=f(v_\lambda)&\quad(x,t)\in\Omega\times(0,T)\\
               v_\lambda(0)=\max\{ v_{\min}-(1-\lambda)\|v_{\min}\|_\infty,0\}\\
               v=0&\quad x\in\partial\Omega
              \end{cases}
             \end{equation}
            by the comparison principle we have that the solution satisfies $v_\lambda(t)>v_{\xi}(t)$ iff $\lambda>\xi$ for all $t\geq 0$. Define the following sets:
            \begin{align*}
             S_0:=\{\lambda\in[0,1] \text{ s.t. } \omega(\lambda v_{\min})=\{w\equiv 0\}\}\\
             S_{v_{\min}}:=\{\lambda\in[0,1] \text{ s.t. } \omega(\lambda v_{\min})=\{v_{\min}\}\}
            \end{align*}

            Observe that the set $S_0$ is an open set.  Take $\lambda+\epsilon$, then for $\epsilon$ small enough there will exist a time $t^*$ for which the solution associated with $\lambda+\epsilon$ is close to $w\equiv 0$ in the $L^\infty$ norm and hence below $w\equiv\theta$. By comparison (or local stability of $w\equiv 0$) we conclude that $\lambda+\epsilon\in S_0$ provided that $\epsilon$ is small enough.
            Then, two situations can arise:
            \begin{itemize}
             \item $S_0\cup S_{v_{\min}}\subsetneq[0,1]$. Then, taking $\lambda \in [0,1]\backslash(S_0\cup S_{v_{\min}})$, we see that it leads to a contradiction with the fact of $v_{\min}$ being a minimal solution. Indeed, the solution approaches the set of steady-states and it cannot approach $w\equiv 0$ since by comparison or local stability it would converge to it. Then it means that there is a sequence of times $t_n$ such that $v_\lambda(t_n)$ converges  some other steady-state. 
             \item $S_0\cup S_{v_{\min}}=[0,1]$. Since $S_0$ is open it means that $S_{v_{\min}}=[\lambda^*,1]$ (the case in which $S_{\min}=\{1\}$ already proves our claim). This also implies the instability of $v_{\min}$. One can see that we can take a sequence of $\epsilon_n\to 0$ and take $\lambda_n=\lambda^*-\epsilon_n$, then, by continuity with respect to the initial data we have that there exists a sequence of times $t_{\epsilon_n}$ such that $\|v_{\lambda}(t_{\epsilon_n})-v_{\min}\|_\infty\to 0$. However, this would be a meta-stability phenomena since the trajectory by assumption will converge to $w\equiv 0$. We have just found a sequence of states tending to $v_{\min}$ by below that asymptotically converge to $w\equiv 0$ as $t\to \infty$.
             
             Up to here, we already proved the instability of $v_{\min}$. However, this last situation cannot arise. Remind that we are approaching $v_{\min}$ in the $L^\infty$ norm, and we have that each initial data corresponding to $\xi\in S_{\min}\backslash\{1\}$ is uniformly away of $v_{\min}$. Then, the sequence found before of $v_\lambda(t_{e_n})$ will be above $v_\xi(0)$ for any $\xi\in S_{\min}$ leading to a contradiction with the maximum principle.
            \end{itemize}

            \end{proof}
            
            Let us now go back to the proof of the unboundedness of $T^*_{u_{0}}$. Assume by contradiction that there exists a finite $T^*$ such that $T^*\geq T_{u_{0}}^*$ for all $u_0\in\mathcal{A}$. Take a sequence of initial data $u_{0,n}$ converging to a minimal solution $v_{\min}$ and the corresponding sequence of controls $a_n$ that bring it to $w\equiv\theta$ in time $T^*$. By Banach-Alaoglu there exist the limit of $a^*$ of $a_n$ in the weak$^*$ topology and since the limit of $\lim_{n}u_{0,n}=v_{\min}$ we obtain a control $a^*$ that brings in finite time to $w\equiv\theta$ a non-controllable initial datum, since $v_{\min}$ is a barrier.

    \subsection{Example: A path that connects with the minimal barrier}\label{pathexample}
    
    If we consider $\Omega$ a ball and $0<\mu\leq \mu^*(\Omega,f)$, a nontrivial steady solution with boundary $a=0$ exists. In this geometry, we can construct following the same arguments, a path that connects $w\equiv0$ with the minimal barrier with respect to the $L^\infty$-norm (see Figure \ref{tobarrier}). This allows us to use the staircase method iteratively to obtain an open-loop stabilization, provided that $u_0\in\mathcal{A}$. Of course, we will never reach this target in a finite time horizon since the trace of this path is not bounded away from $0$.
    
    One can prove that the uncontrollability in a finite time horizon to a barrier does not depend on the control strategy. This can be done using a duality argument similar to the one in \cite{DARIO}, where a waiting time phenomenon was observed for the heat equation with nonnegative controls. However, the details will be developed in a future article.
    
    \begin{figure}[H]
    \centering
     \includegraphics[scale=0.5]{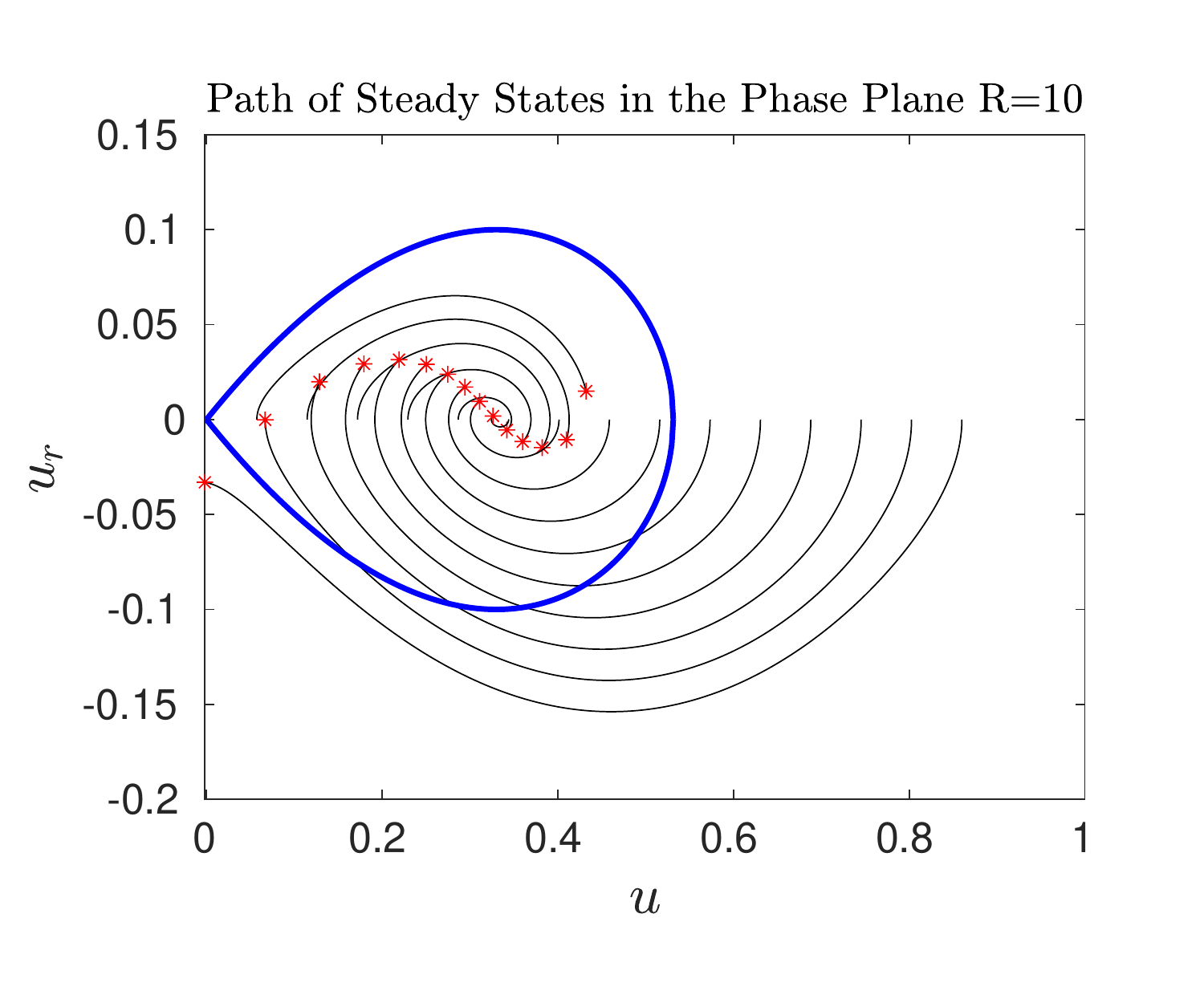}
     \includegraphics[scale=0.5]{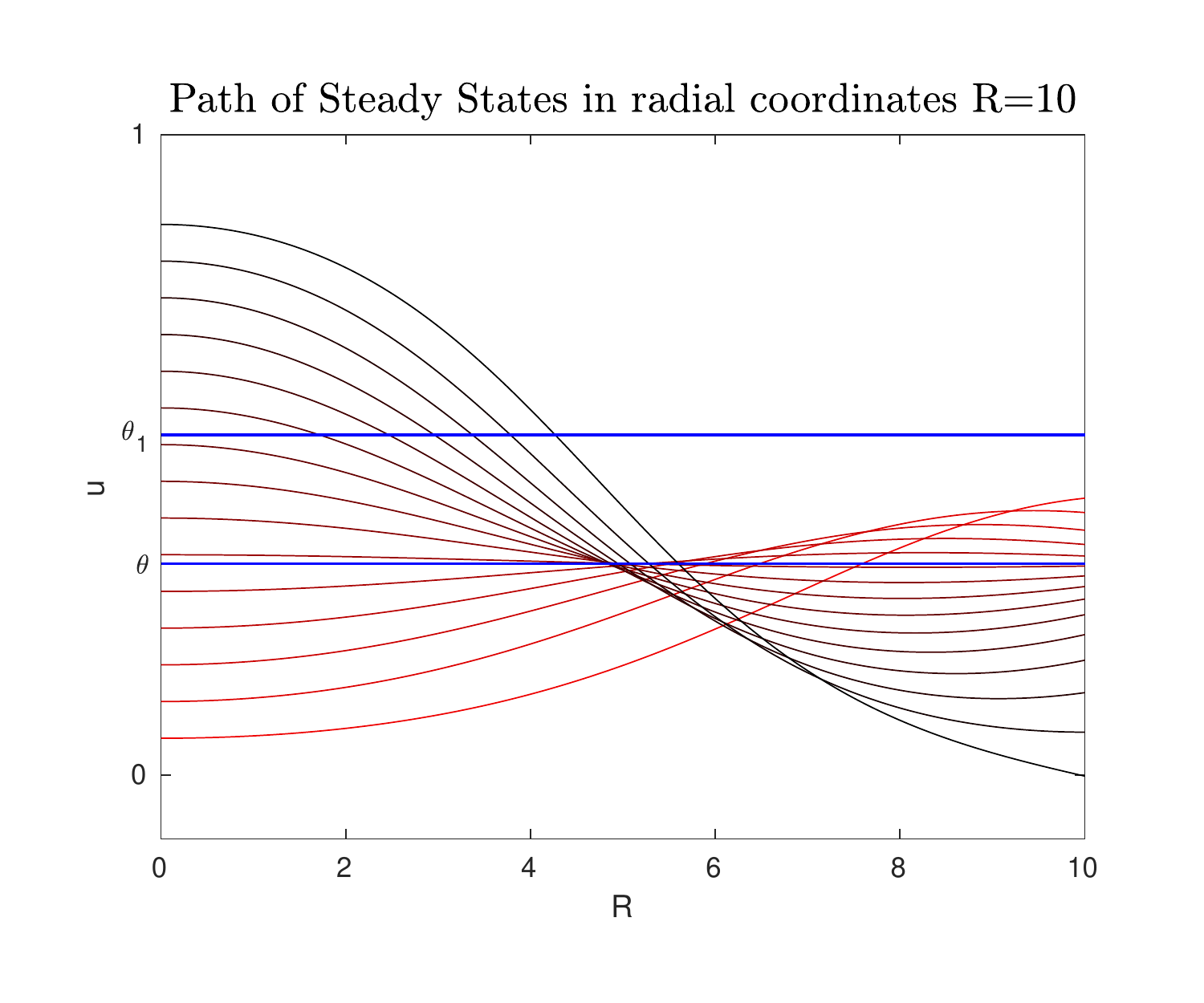}
    \caption{(Left) in blue the invariant region in the phase space, in black the radial trajectories forming the continuous path of steady-states where the red stars indicate the condition in the boundary. (Right) the corresponding continuous family of steady-states connecting to the minimal barrier, $f(s)=s(1-s)(s-1/3)$, seen in radials for $R=10$ and $N=2$.}\label{tobarrier}
    \end{figure}

    \subsection{The set $\mathcal{A}$}\label{Aexample}
            In this section, we will give more examples of initial data that belong to $\mathcal{A}$. To do it we use the path of steady-states that connects $w\equiv 0$ with the minimal barrier with respect to the $L^\infty$-norm $v_{\min}$ in the ball.
            
            \begin{itemize}
             \item Do to Remark \ref{critRemark}, we know that any element in any admissible path that starts at $w\equiv 0$ belongs to $\mathcal{A}$.
             \item We can consider balls of radius $R$ and center $x_0$ that include the domain $\Omega$ and use the elements of the paths that go from $w\equiv0$ to their minimal solution as supersolutions of the original problem. Since these elements go to zero (Remark \ref{critRemark}), any initial datum below them will also go to zero in the domain $\Omega$.

             \end{itemize}

    \subsection{Proof of the Theorem 1.3, $F(1)=0$ case.}
    In this case the scheme is very similar with only one difference, that the traveling waves for the Cauchy problem are stationary. In particular this will imply the following claim:
        \begin{claim}\label{newclaim}
        If $F(1)=0$, there is a unique solution of:
        \begin{equation}\label{ohhh}
        \begin{dcases}
        -\Delta v=f(v)&\quad x\in\Omega,\\
        v=a&\quad x\in\partial\Omega,
        \end{dcases}
        \end{equation}
        for $a\in\{0,1\}$
        \end{claim}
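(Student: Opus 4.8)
The plan is to prove both uniqueness statements (existence being trivial, since $v\equiv a$ solves the problem) by a \emph{sliding} argument in which the now–stationary front plays the role of a comparison barrier. The balance condition $F(1)=0$ is precisely what turns this front into a genuine stationary solution, and hence into an admissible comparison function on every slice of $\Omega$.

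First I would treat $a=0$. When $F(1)=0$ one has $F\le 0$ on $[0,1]$ (it decreases from $F(0)=0$ down to $F(\theta)<0$ and increases back to $F(1)=0$), so the phase–plane relation $\tfrac12(\phi')^2+F(\phi)=0$ produces a monotone decreasing heteroclinic profile $\phi:\mathbb{R}\to(0,1)$ with $\phi(-\infty)=1$, $\phi(+\infty)=0$ and $-\phi''=f(\phi)$; this is exactly the stationary traveling wave recorded in the earlier remark. For a unit vector $e$ and $t\in\mathbb{R}$ set $\Phi_{e,t}(x):=\phi(e\cdot x-t)$. Since $|e|=1$, each $\Phi_{e,t}$ solves $-\Delta\Phi_{e,t}=f(\Phi_{e,t})$ on all of $\mathbb{R}^N$, in particular on $\Omega$. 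Suppose, towards a contradiction, that $v$ is an admissible solution of the $a=0$ problem with $v\not\equiv 0$. Applying the strong maximum principle to $v$ and to $1-v$ gives $0<v<1$ in $\Omega$, so $\max_{\overline\Omega}v<1$, while $v\equiv 0<\phi$ on $\partial\Omega$.

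Now I would slide. As $\Phi_{e,t}$ is increasing in $t$ (indeed $\partial_t\Phi_{e,t}=-\phi'>0$) and $\Phi_{e,t}\to 1$ uniformly on the compact set $\overline\Omega$ as $t\to+\infty$, the set $\{t:\Phi_{e,t}\ge v\text{ on }\overline\Omega\}$ is a half–line $[t^\ast,\infty)$, and at $t=t^\ast$ the inequality $\Phi_{e,t^\ast}\ge v$ holds with equality at some point $x^\ast$. Since $\Phi_{e,t^\ast}>0=v$ on $\partial\Omega$, this contact point is interior. The difference $w:=\Phi_{e,t^\ast}-v\ge 0$ satisfies $-\Delta w=c(x)w$ with bounded coefficient $c(x):=\int_0^1 f'(v+s(\Phi_{e,t^\ast}-v))\,ds$, and $w(x^\ast)=0$ is an interior minimum; the strong maximum principle on the connected domain $\Omega$ then forces $w\equiv 0$, i.e. $v\equiv\Phi_{e,t^\ast}$. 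Evaluating on $\partial\Omega$ yields $0=\phi(e\cdot x-t^\ast)>0$, a contradiction. Hence $v\equiv 0$ is the unique admissible solution for $a=0$.

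Finally, the case $a=1$ reduces to the previous one via the reflection $v\mapsto\tilde v:=1-v$: then $\tilde v$ solves $-\Delta\tilde v=g(\tilde v)$ with $g(s):=-f(1-s)$ and boundary value $0$, and one checks directly that $g$ is bistable with threshold $1-\theta$ and $G(1)=-F(1)=0$, so the $a=0$ result gives $\tilde v\equiv 0$, i.e. $v\equiv 1$. I expect the genuinely delicate point to be the sliding step: guaranteeing that the first contact is interior (this is where the strict positivity of $\phi$ on $\partial\Omega$ and the strict bound $\max_{\overline\Omega}v<1$ enter) and invoking the strong maximum principle with a zeroth–order coefficient of unrestricted sign. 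By contrast, the existence and monotonicity of the stationary front is just the phase–plane statement already at hand, and the reflection reduction is routine.
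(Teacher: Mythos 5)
Your proof is correct, but it takes a genuinely different route from the paper's. The paper argues by contradiction using its fictitious-domain machinery: a putative nontrivial solution with boundary value $0$ is extended by zero to a large ball $B_R\supset\Omega$, a sub/supersolution argument (with $w\equiv 1$ as supersolution) produces a nontrivial \emph{radial} solution in the ball, and then the radial ODE's energy dissipation, $\frac{d}{dr}\left(\frac{1}{2}v_r^2+F(v)\right)=-\frac{N-1}{r}v_r^2\leq 0$, combined with Hopf's lemma (so the energy at $r=R$ is $\frac{1}{2}v_r^2>0$) contradicts the fact that the trajectory must start on the segment $\{(u,0):0<u<1\}$, which has negative energy when $F(1)=0$. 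You instead run a sliding argument with the planar standing fronts $\phi(e\cdot x-t)$, whose existence is exactly the other face of the same structural fact ($F<0$ on $(0,1)$ with $F(0)=F(1)=0$): first contact must be interior, and the strong maximum principle with a bounded zeroth-order coefficient forces coincidence with a front, contradicting the boundary data. What each buys: the paper's route reuses its already-developed radial ODE/invariant-region toolkit but needs the extension-by-zero subsolution step and radial symmetry of positive solutions in a ball (Gidas--Ni--Nirenberg type); your route avoids symmetrization and energy arguments entirely, at the cost of the sliding machinery and the maximum principle with a sign-unrestricted coefficient (handled in the standard way by splitting $c=c^+-c^-$). Your explicit reflection $v\mapsto 1-v$ for $a=1$ is also cleaner than the paper's ``same argument'' remark. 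One shared implicit hypothesis worth flagging: both proofs use the admissibility constraint $0\leq v\leq 1$ (you use it to get $0<v<1$ and $\max_{\overline{\Omega}}v<1$; the paper uses it to place the trajectory in the negative-energy region), even though the claim as stated omits it; without that constraint the uniqueness assertion would require a separate argument.
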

        Hence Claim \ref{newclaim} ensures that there is no barrier. The construction of the path of steady-states works very similarly also due to the fact that the traveling waves will generate an invariant region of negative energy where $w\equiv\theta$ will be inside and $w\equiv1$ and $w\equiv0$ will be at the boundary of the region. Moreover, the restriction of the traveling waves gives other admissible paths of continuous steady-states for going from $w\equiv 0$ to $w\equiv 1$ and vice-versa.
        We proceed to prove the claim:
        \begin{proof}
        The proof follows by contradiction. By simplicity, assume $a=0$, the other case follows with the same argument. Assume it exists a solution of \eqref{ohhh}. Extend the solution by $0$ in a ball $B_R$ for $R$ big enough such that $\Omega\subset B_R$. Since $w\equiv 1$ is a supersolution, we have that in the ball $B_R$, there exists a nontrivial solution with boundary value $0$. This solution is radial since we are working in a ball, and we can write it as a radial ODE. We see that the boundary of this solution must satisfy that $v_r^2>0$, which implies that the energy at $R$ is positive. By the dissipation of the radial ODE, we know that the energy at the origin of the ODE (or center of the ball) is bigger or equal than this energy. From here, it leads to contradiction because the ODE cannot cross the horizontal axis of the phase plane $v_r=0$, between $0$ and $1$ because it is a region of negative energy.
        
        The uniformity of the control time follows from the convergence to $w\equiv 0$ for the initial datum $u_0\equiv 1$
        
        \end{proof}


\section{Numerical illustration}\label{Snum}

\subsection{Minimal controllability time}
The fact of adding state constraints to the problem
induces a minimum controllability time (see \cite{DARIO,LOHEAC}). The staircase method provides the controllability in time large but it is far from being the minimal time. In this subsection we are going to compute it numerically.

We will minimize the time of exact controllability $T$:
\begin{equation*}
 I[a]=T,
\end{equation*}
 under the dynamic constraints
\begin{equation*}
\begin{dcases}
  u_r-\mu u_{rr}-\mu \frac{N-1}{r}u_r=u(1-u)(u-\theta),\\
 u(t,R)=a(t),\\
 u_r(t,0)=0,\\
 u(0,r)=0,
\end{dcases}
\end{equation*}
and:
\begin{align*}
 &0\leq u(t,r)\leq1\quad \forall (t,r)\in [0,T]\times [0,R],\\
 &\theta-\epsilon \leq u(T,r)\leq \theta+\epsilon \quad \forall r\in[0,R].
\end{align*}
For the numerical scheme, finite differences where employed using an implicit
scheme with a fixed point for the nonlinearity.
In Figure \ref{ControlMT} one can see the control function showing a bang-bang behavior.

\begin{figure}
    \begin{center}

 \includegraphics[scale=0.5]{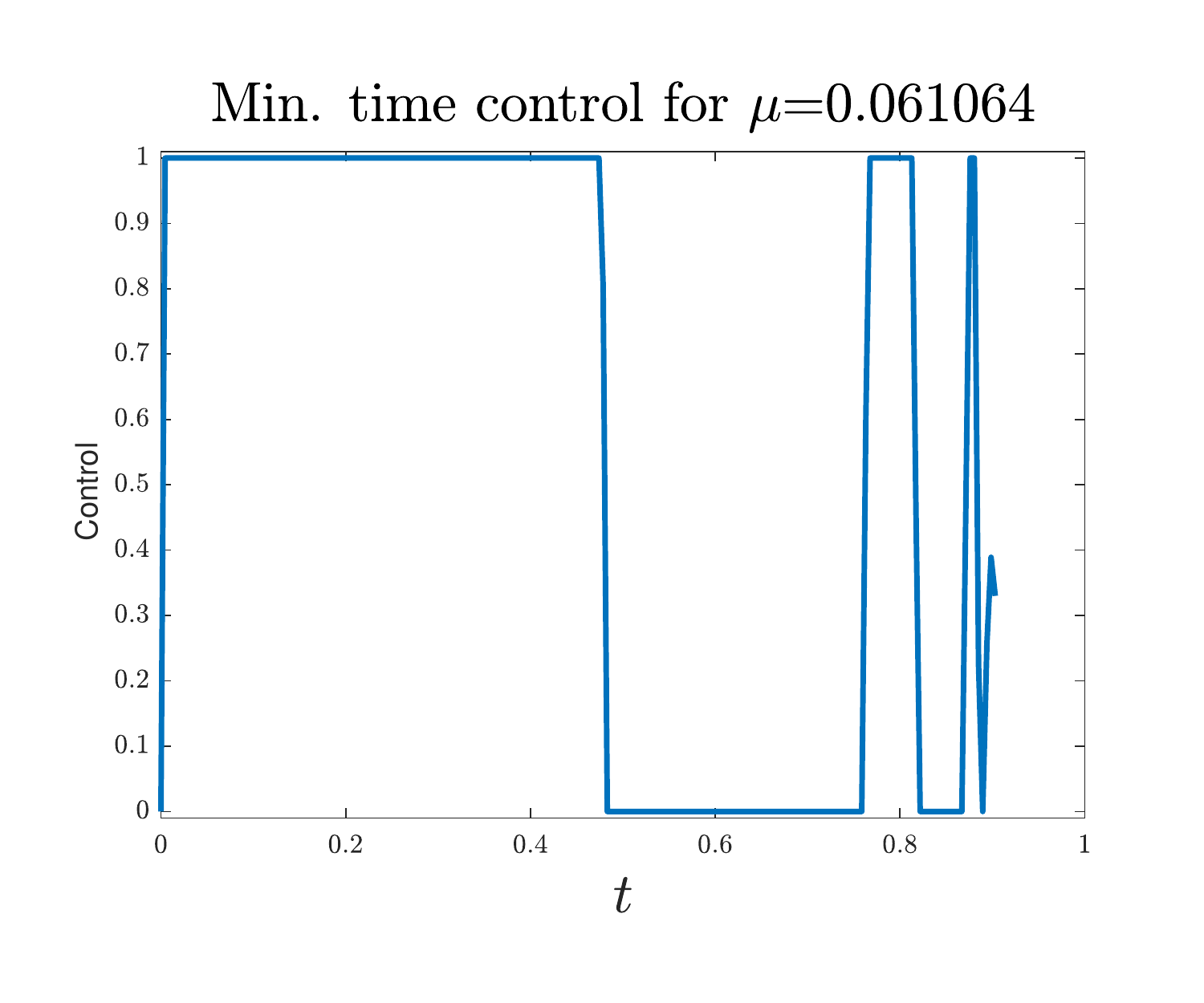}
  \includegraphics[scale=0.5]{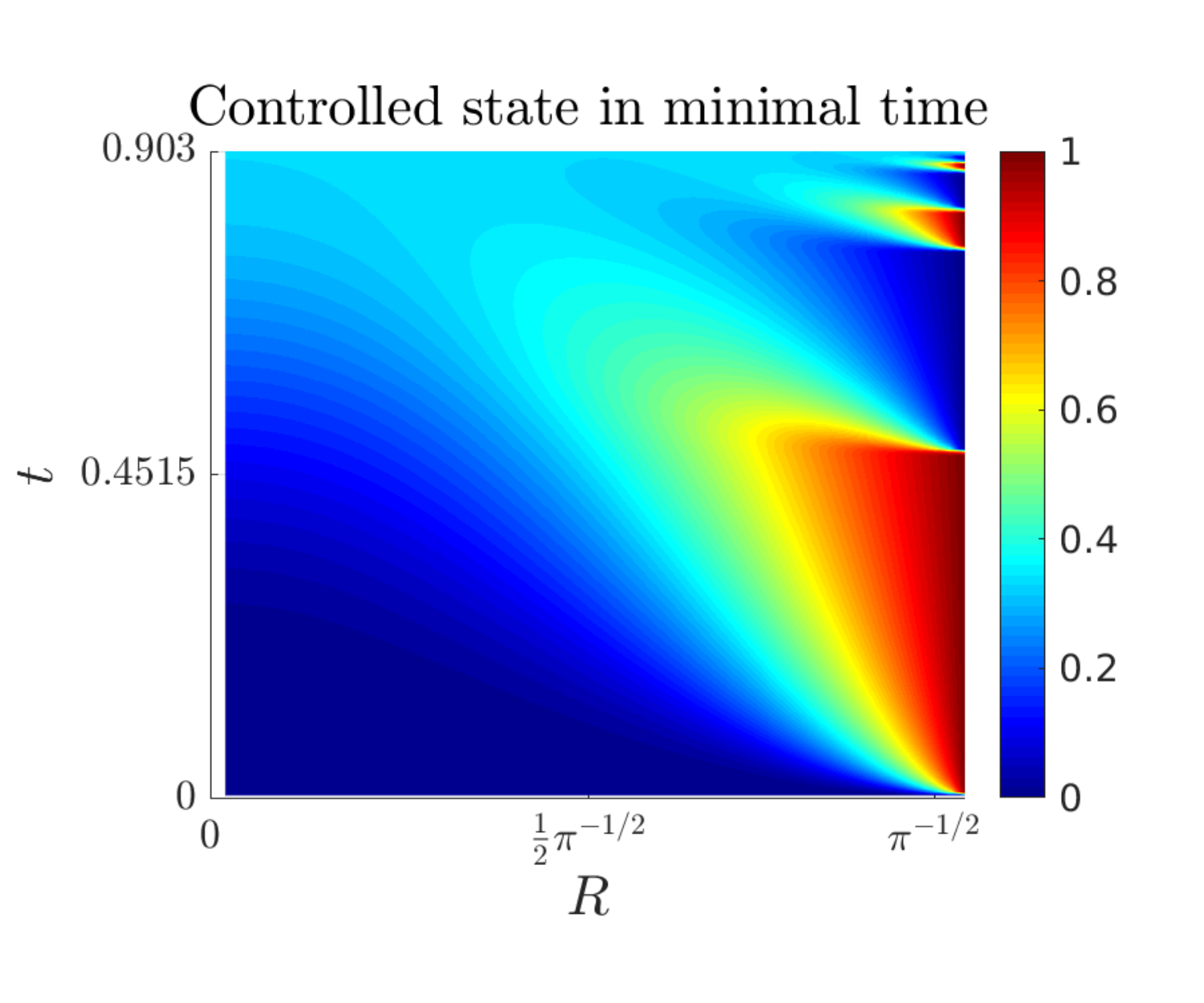}
\caption{Minimal controllability time from $u_0\equiv 0$ to $u(T_{\min})=w\equiv \theta$ in a ball of measure $1$ with diffusivity $\mu=0.0611$, $\epsilon=0.01$. Nonlinearity $f(s)=s(1-s)(s-1/3)$}\label{ControlMT}
\end{center}
\end{figure}

The numerical simulations point that the control in minimal time is bang-bang.

\subsection{Visualization of the continuous path for large R}
 Figure \ref{Illustration} shows several captures of the continuous path of steady-states for $R=30$.
\begin{figure}
    \begin{center}

\includegraphics[scale=0.25]{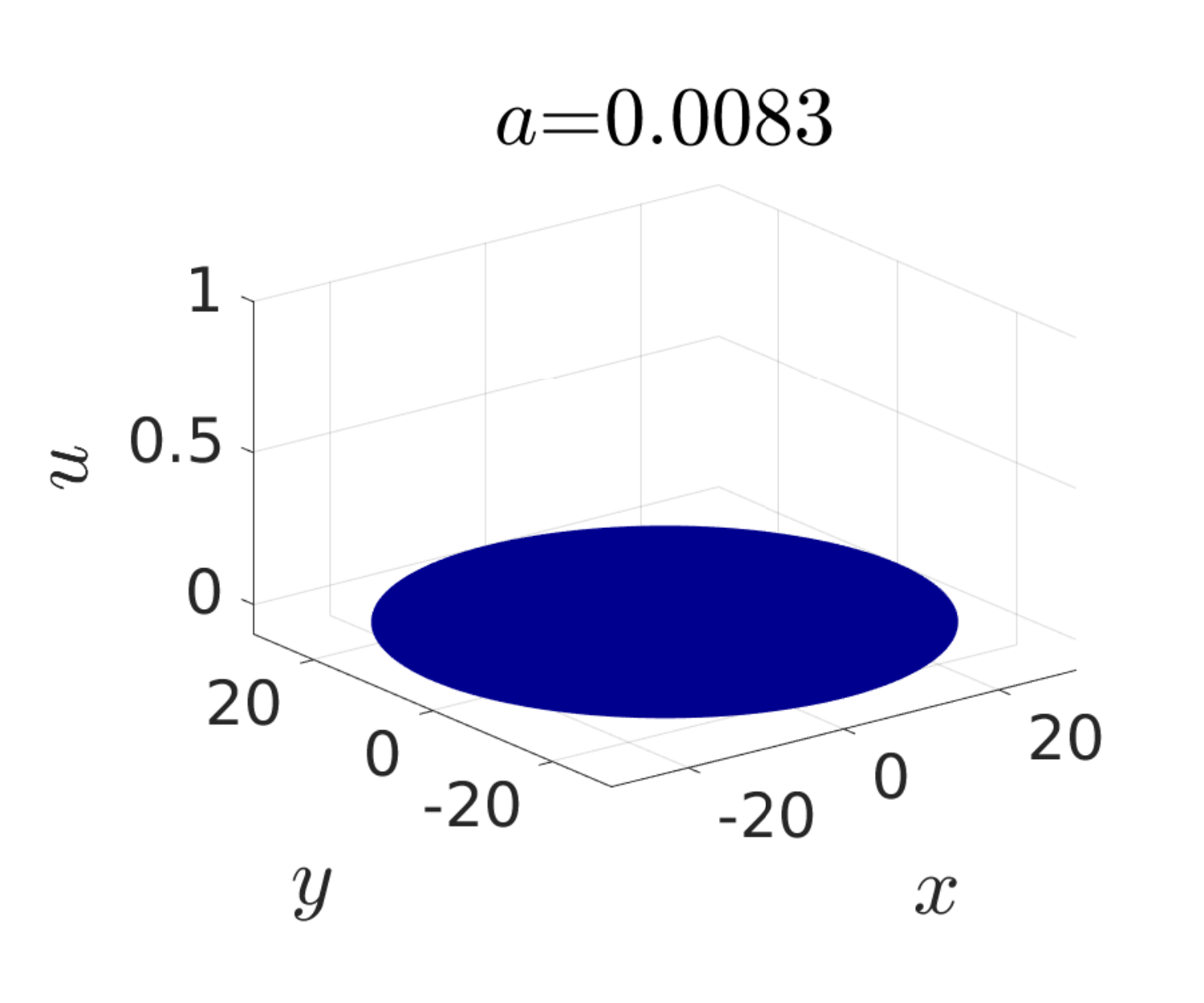}
\includegraphics[scale=0.25]{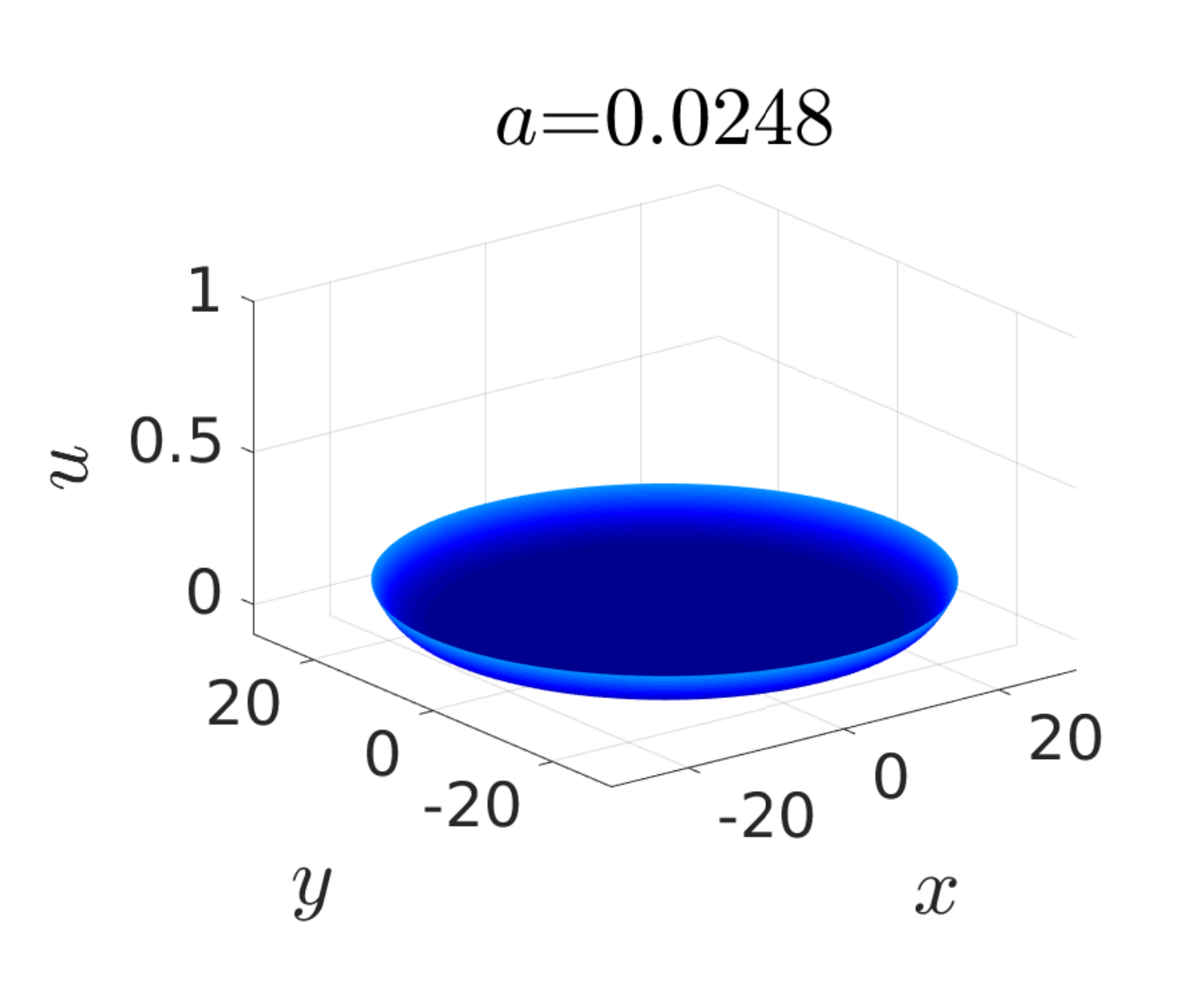}
\includegraphics[scale=0.25]{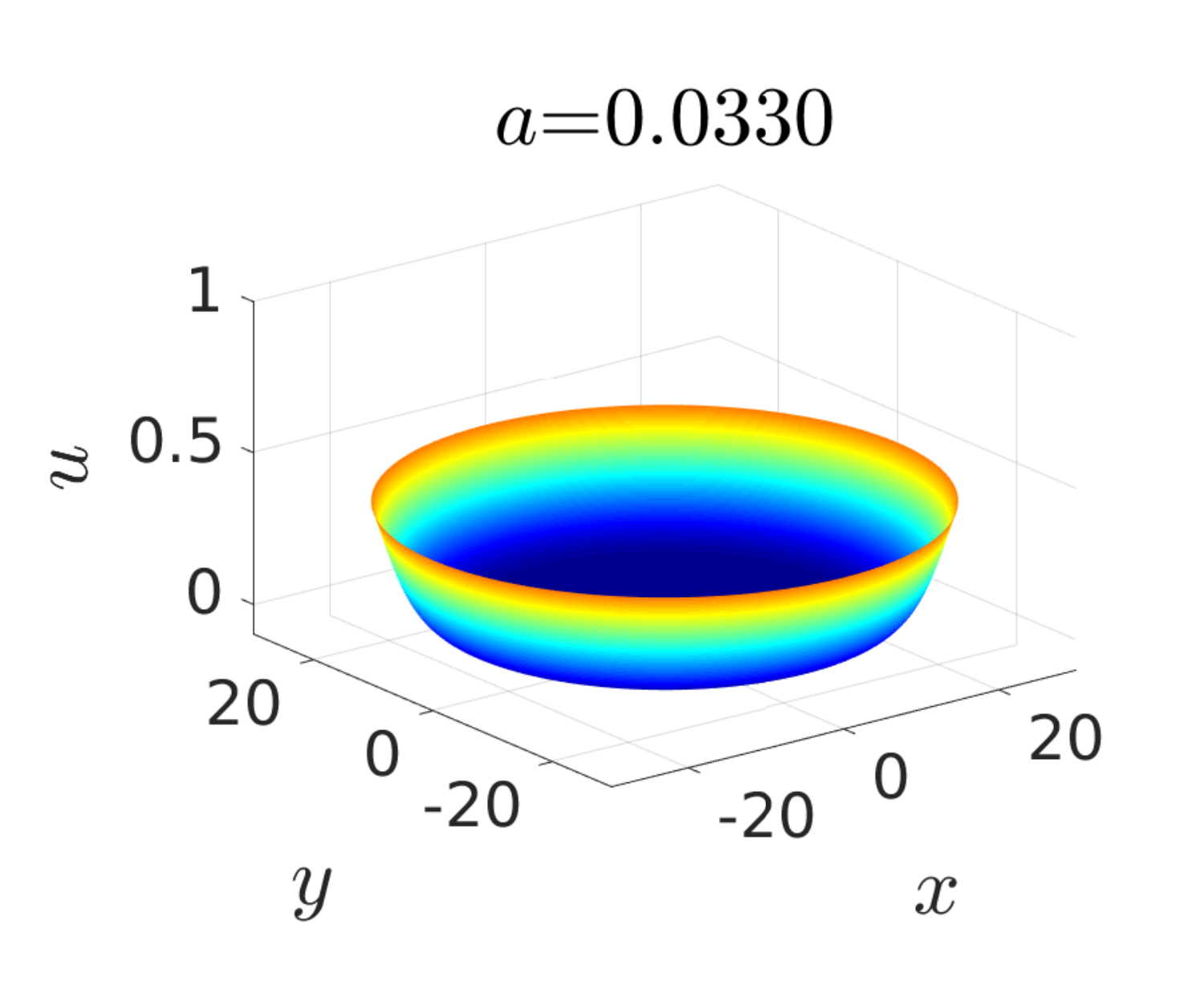}
\includegraphics[scale=0.25]{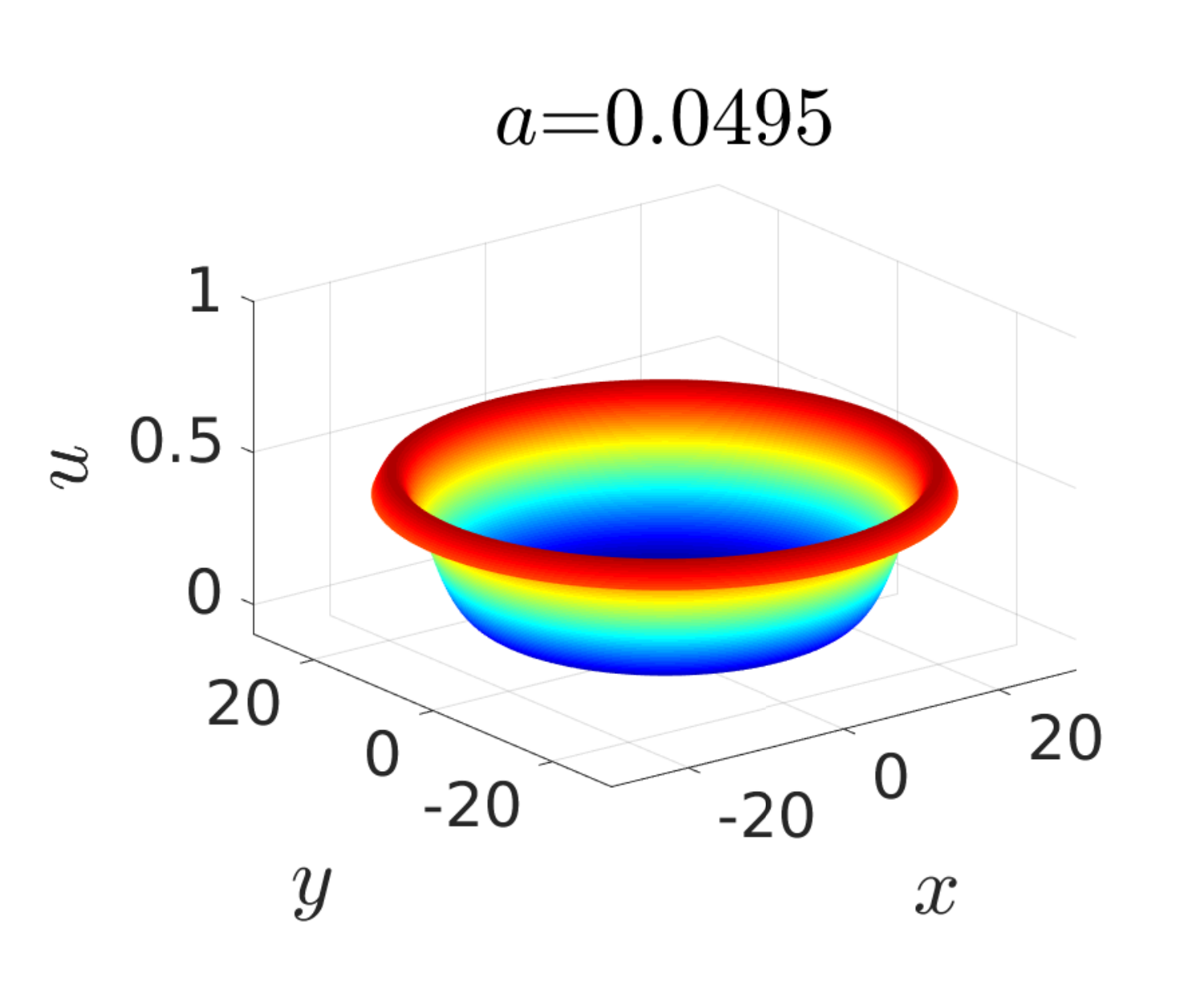}
\includegraphics[scale=0.25]{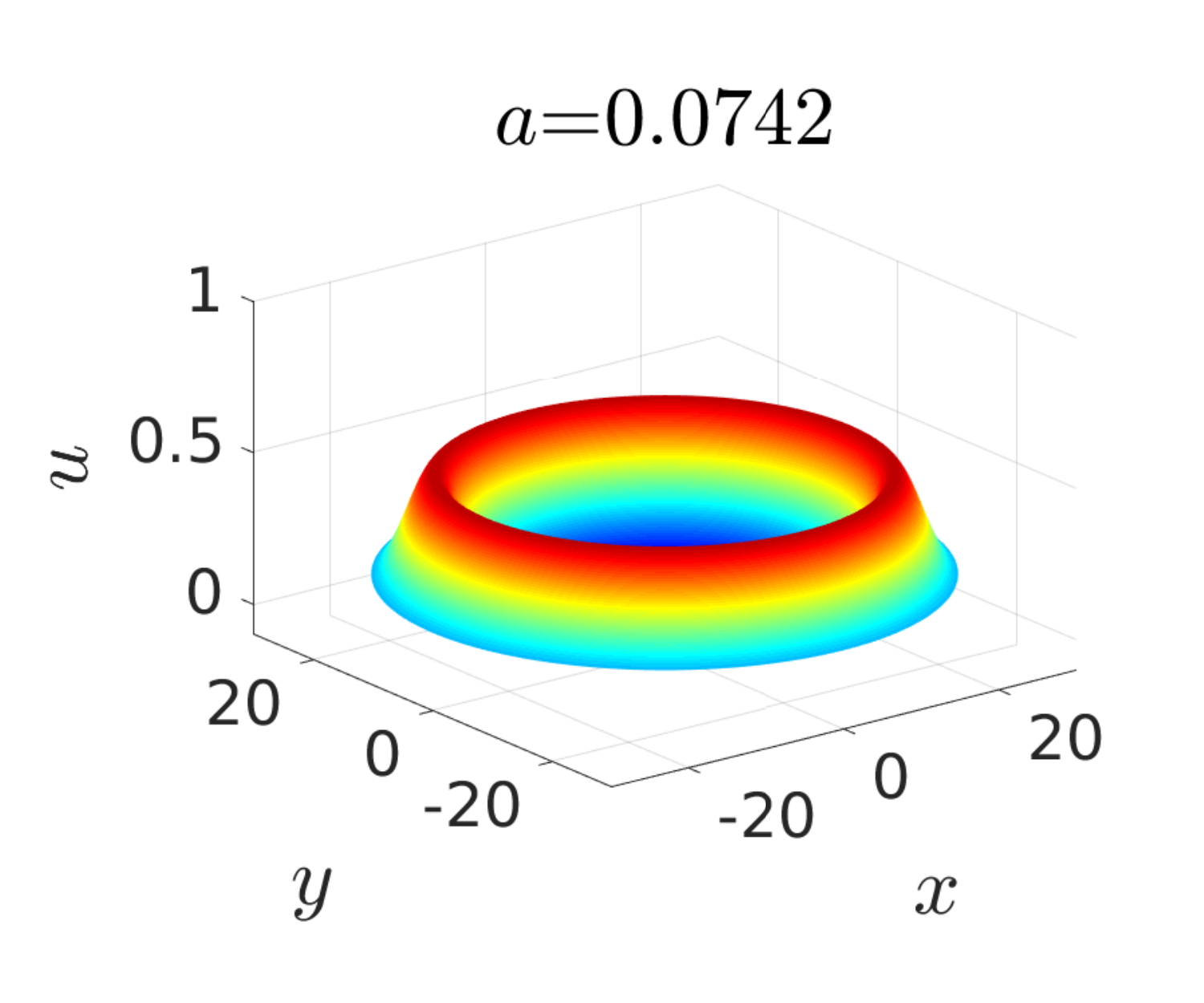}
\includegraphics[scale=0.25]{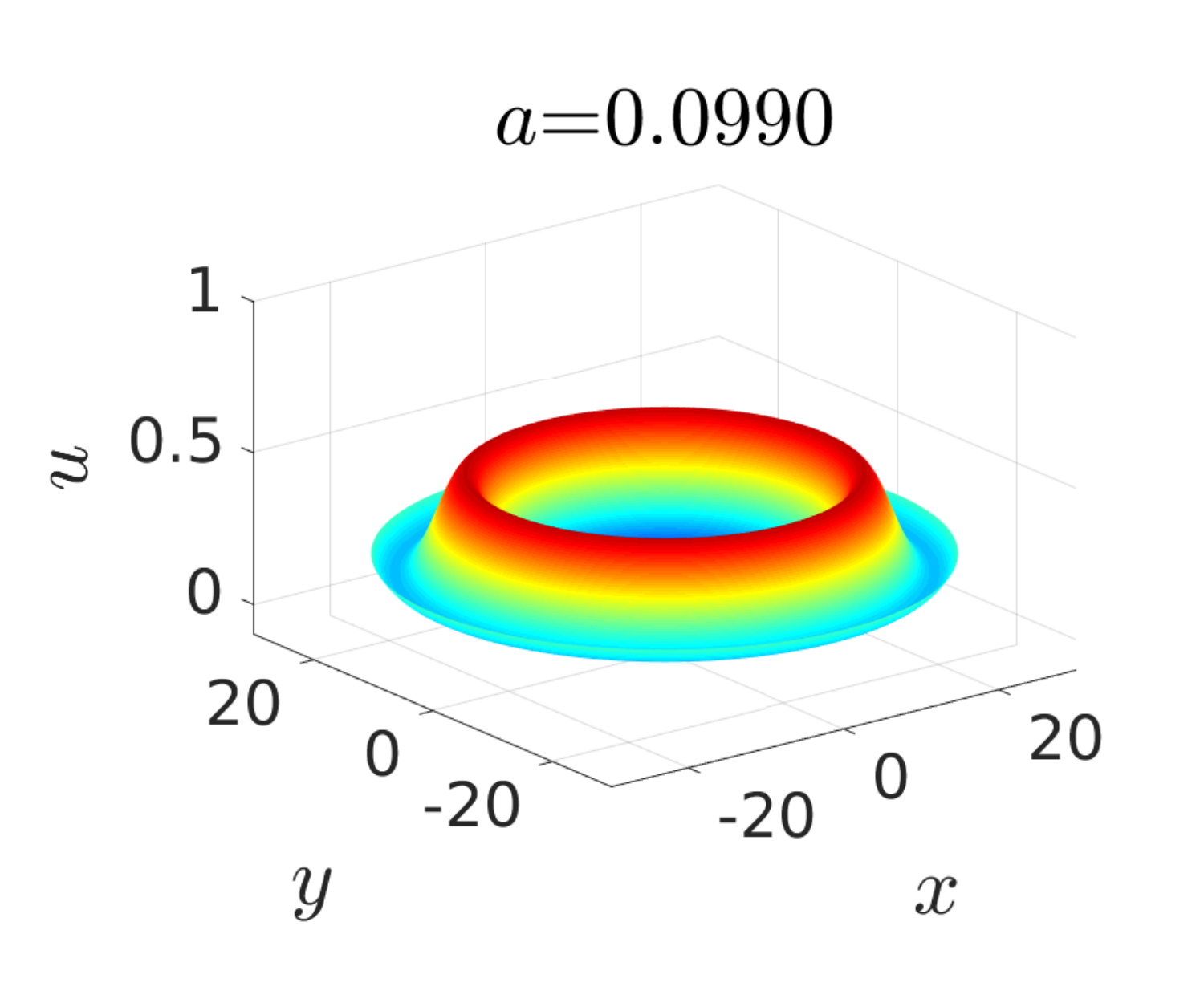}
\includegraphics[scale=0.25]{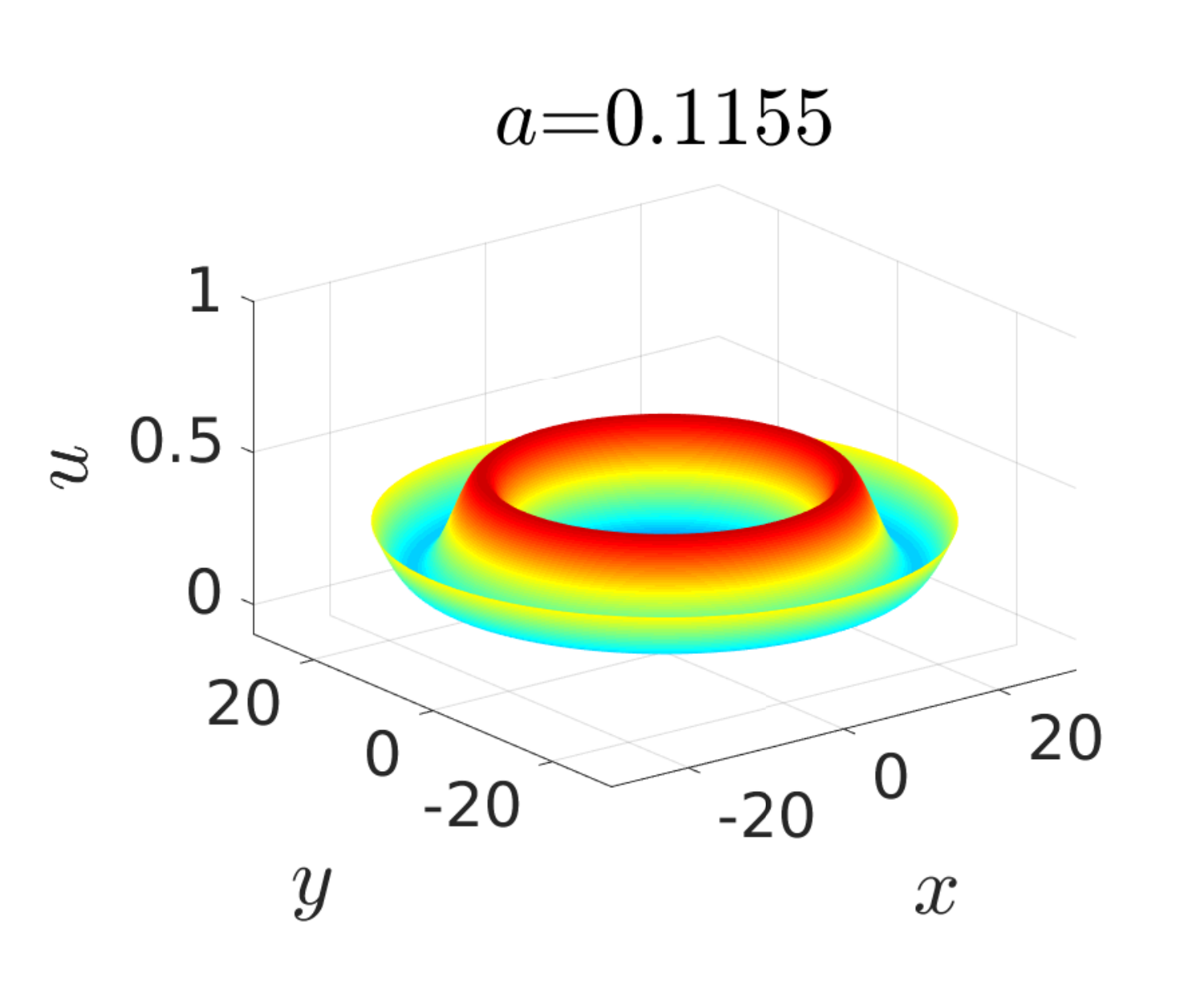}
\includegraphics[scale=0.25]{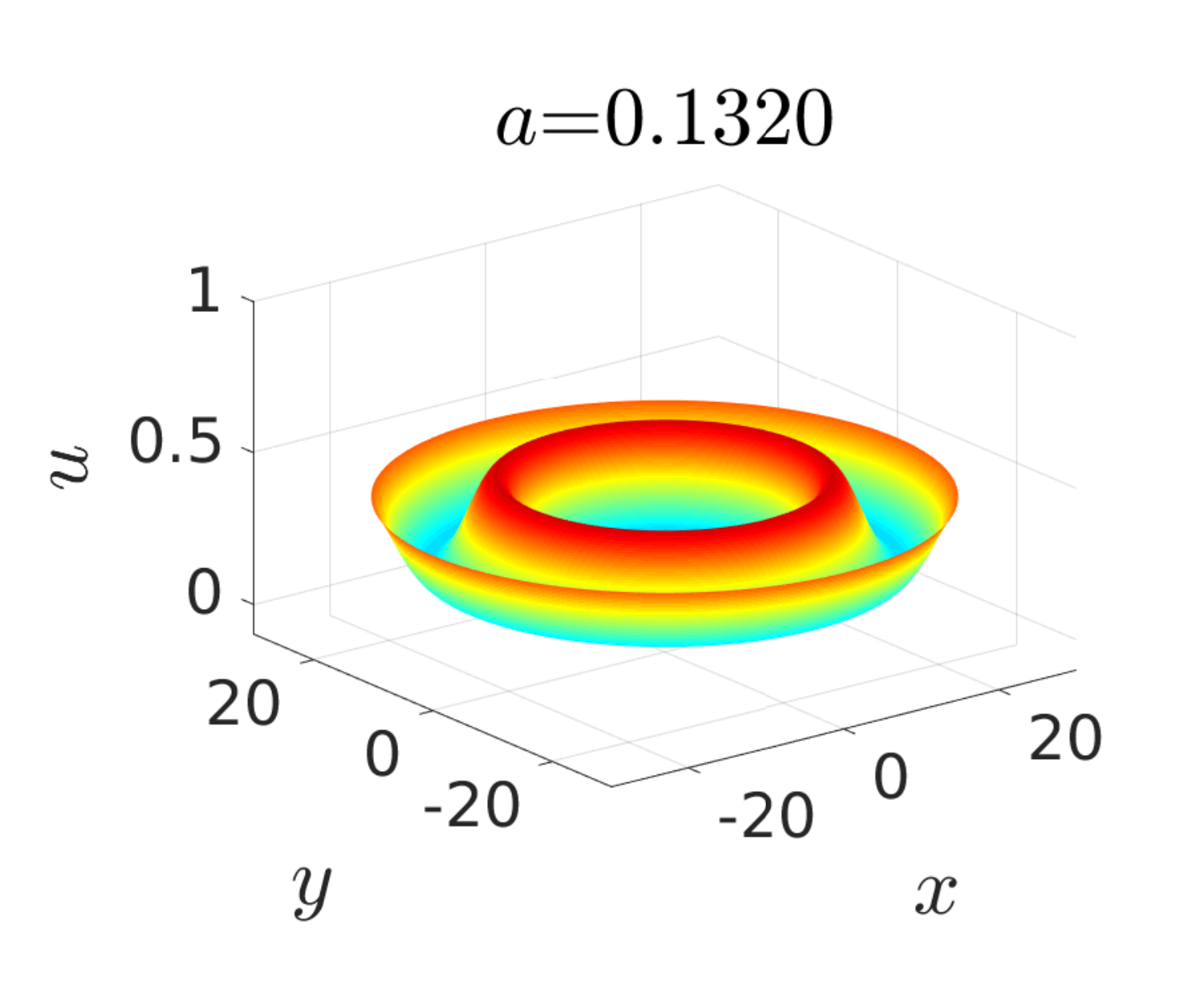}
\includegraphics[scale=0.25]{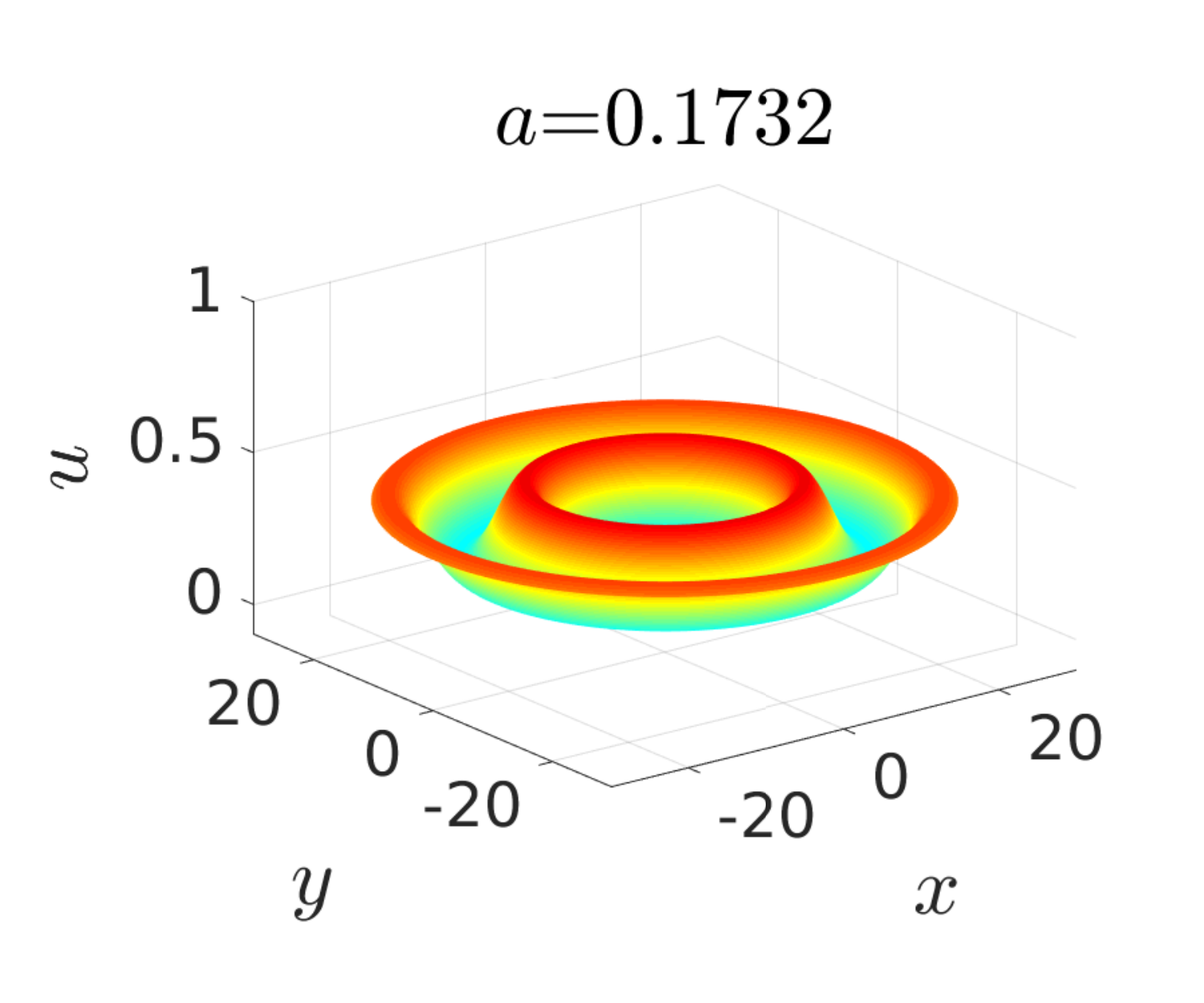}
\includegraphics[scale=0.25]{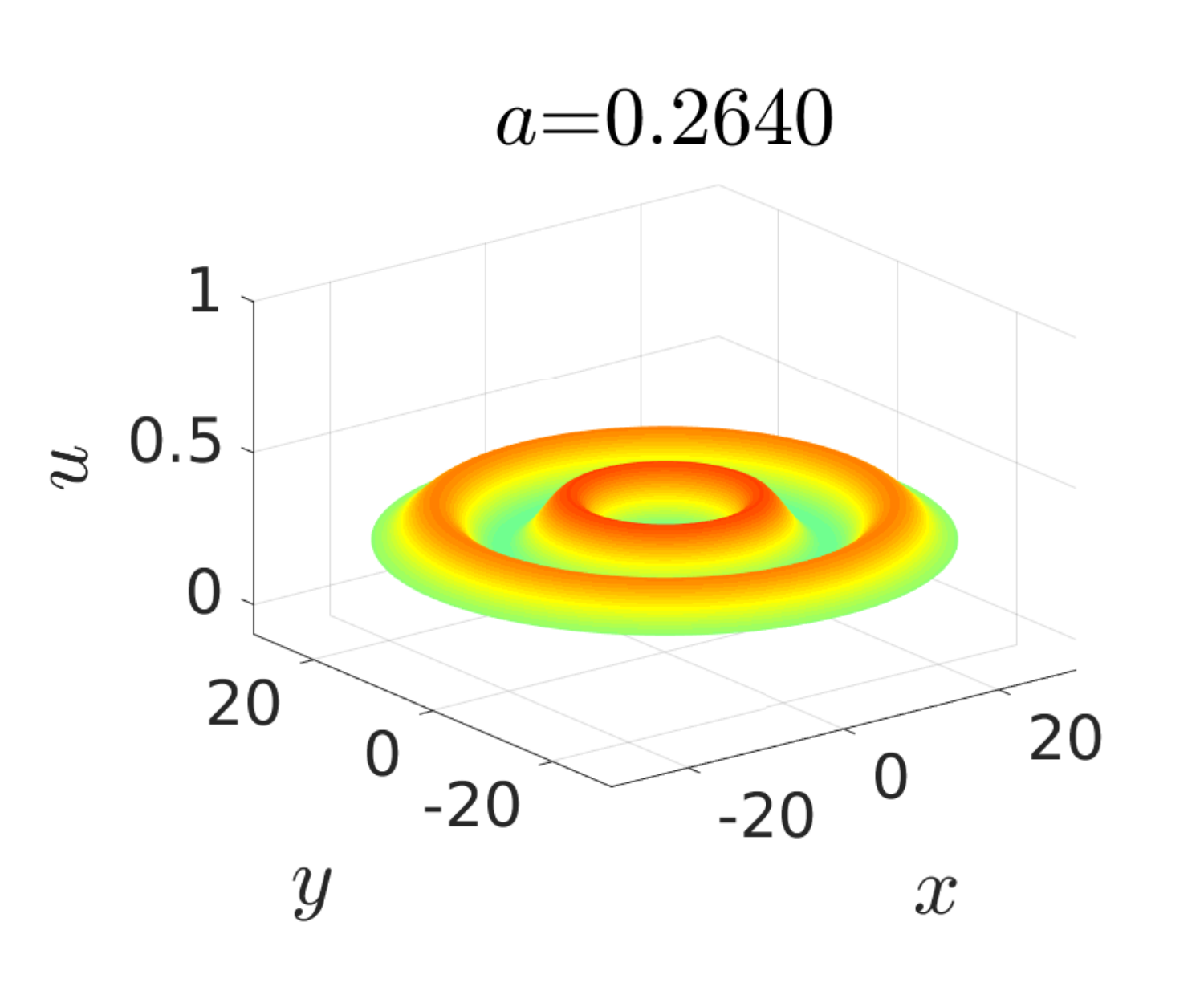}
\includegraphics[scale=0.25]{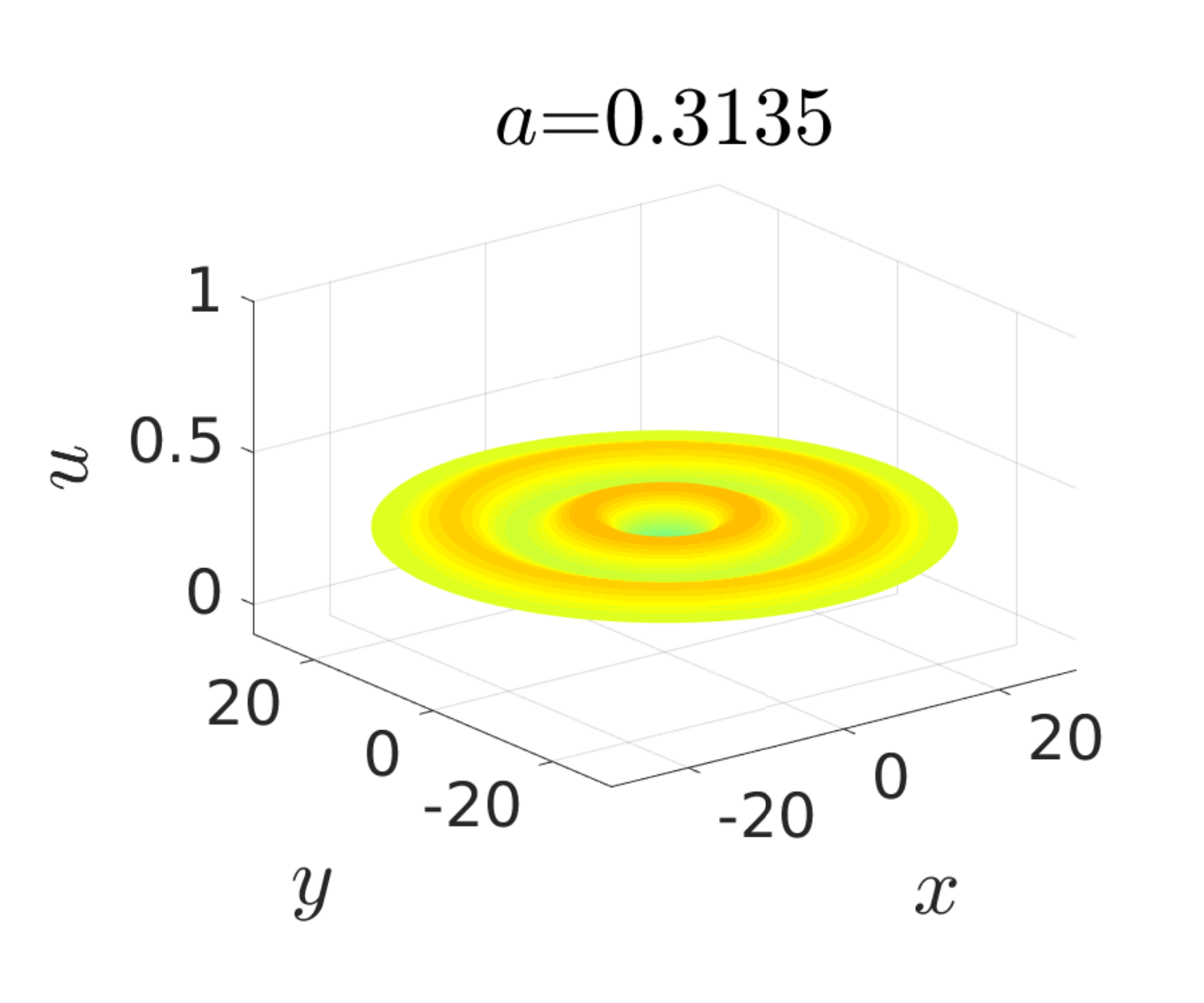}
\includegraphics[scale=0.25]{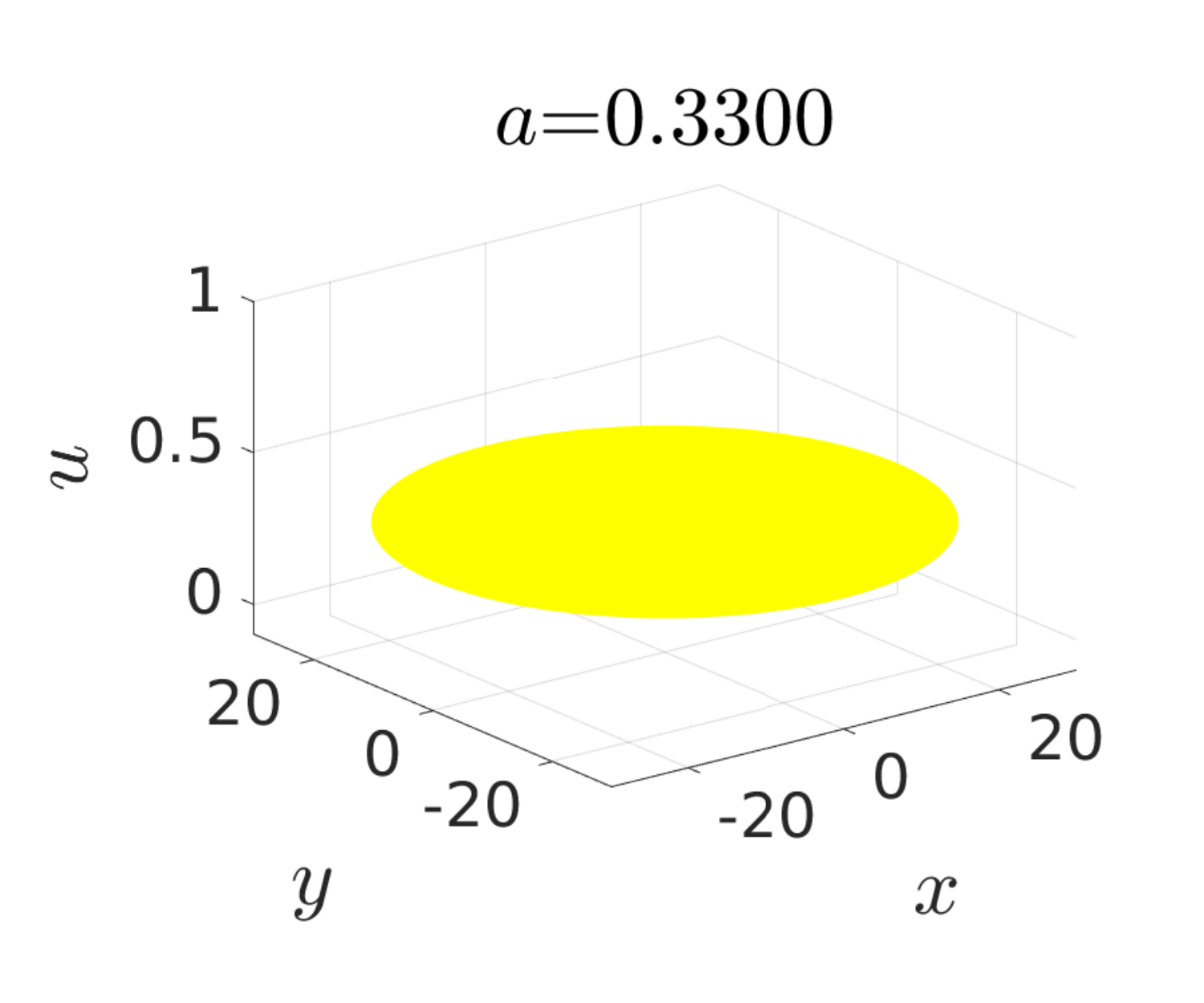}

  \caption{Some steady-states forming part of the continuous path of admissible steady-states connecting to $u=\theta=0.33$ for $R=30$ and $N=2$. Nonlinearity $f(s)=s(1-s)(s-\theta)$.}\label{Illustration}
  \end{center}
\end{figure}
In Figure \ref{Trace}, the trace of the continuous path is shown. One can observe its oscillations which also are natural in the 1-d case. Notice that the appearance of nontrivial solutions with boundary value $\theta$ by the comparison principle can create barriers so that the monotonous path cannot work. In the one dimensional case the oscillations are higher due to the lack of dissipativity of the ODE system generating the elliptic solutions. Indeed, locally around $\theta$, one can observe in the 1D case the harmonic oscillator. 
\begin{figure}
    \begin{center}

  \includegraphics[scale=0.5]{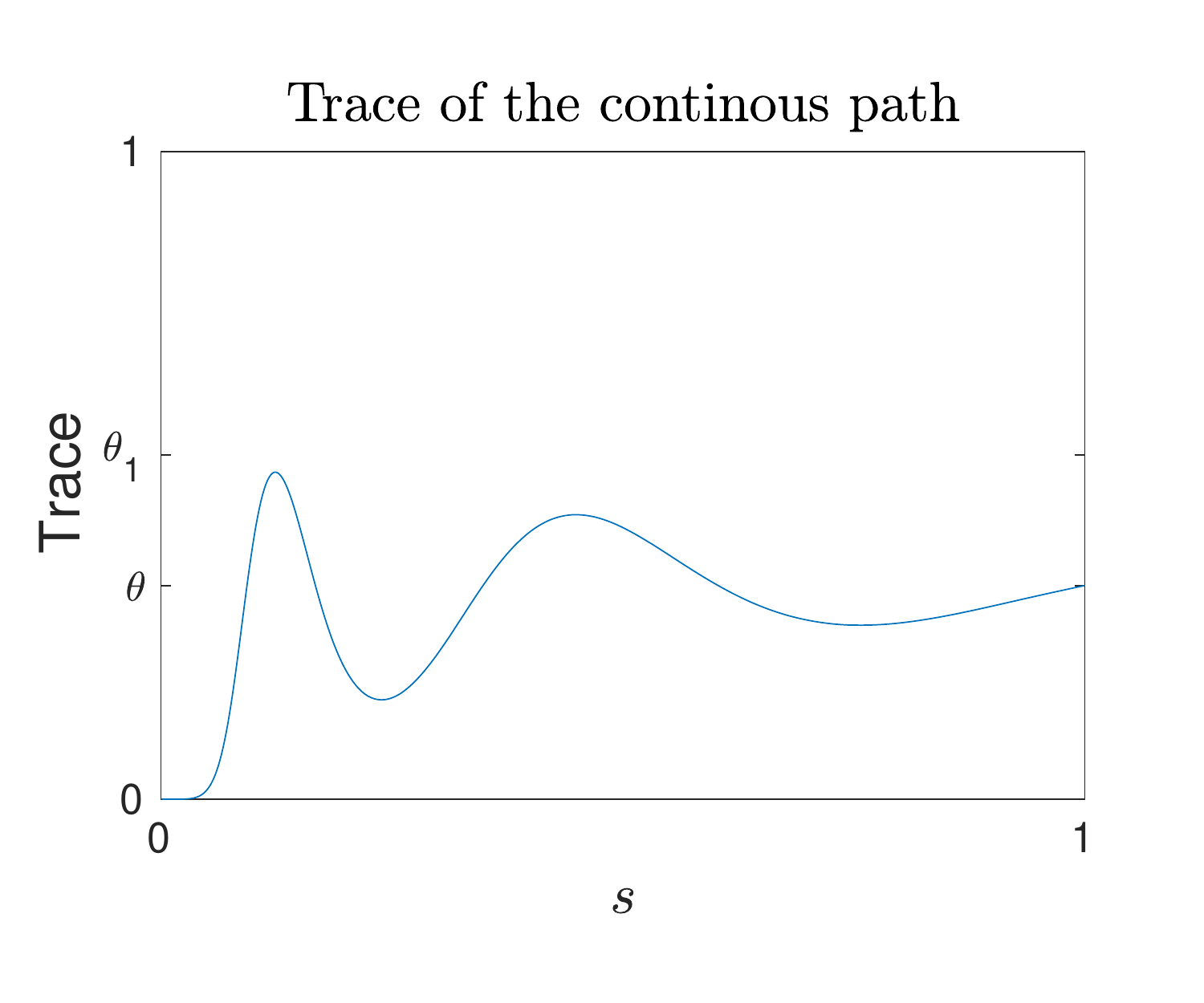}
  \caption{Trace of the continuous path of admissible steady-states.}\label{Trace}
  \end{center}
\end{figure}

\subsection{Quasistatic control}

In this subsection numerical approximations in IPOPT are performed in order
to observe an approximation of the quasistatic control strategy used in
\cite{CORON-TRELAT} in which the authors stabilize the resulting
trajectory of setting the boundary as the trace of the path of
steady-states.

The discrete version of the following cost functional is minimized
\begin{equation*}
 I[a]=\int_0^T a_t(t)^2 dt,
\end{equation*}
 under the dynamic constraints
\begin{equation*}
\begin{dcases}
  u_r-u_{rr}-\frac{N-1}{r}u_r=u(1-u)(u-\theta),\\
 u(t,R)=a(t),\\
 u_r(t,0)=0,\\
 u(0,r)=0,
\end{dcases}
\end{equation*}
and:
\begin{align*}
 &0\leq u(t,r)\leq1\quad \forall (t,r)\in [0,T]\times [0,R],\\
 &\theta-\epsilon \leq u(T,r)\leq \theta+\epsilon \quad \forall r\in[0,R],\\
 &\|a_t\|_\infty<\epsilon,
 \end{align*}
where the last constraint can be required since we already know that the system is controllable to $\theta$ if we start with an initial datum small enough.

Figure \ref{state-phase} shows the controlled state against the time and also in the phase space, in the later one elliptic solutions are also shown.
\begin{figure}
    \begin{center}
  \includegraphics[scale=0.4]{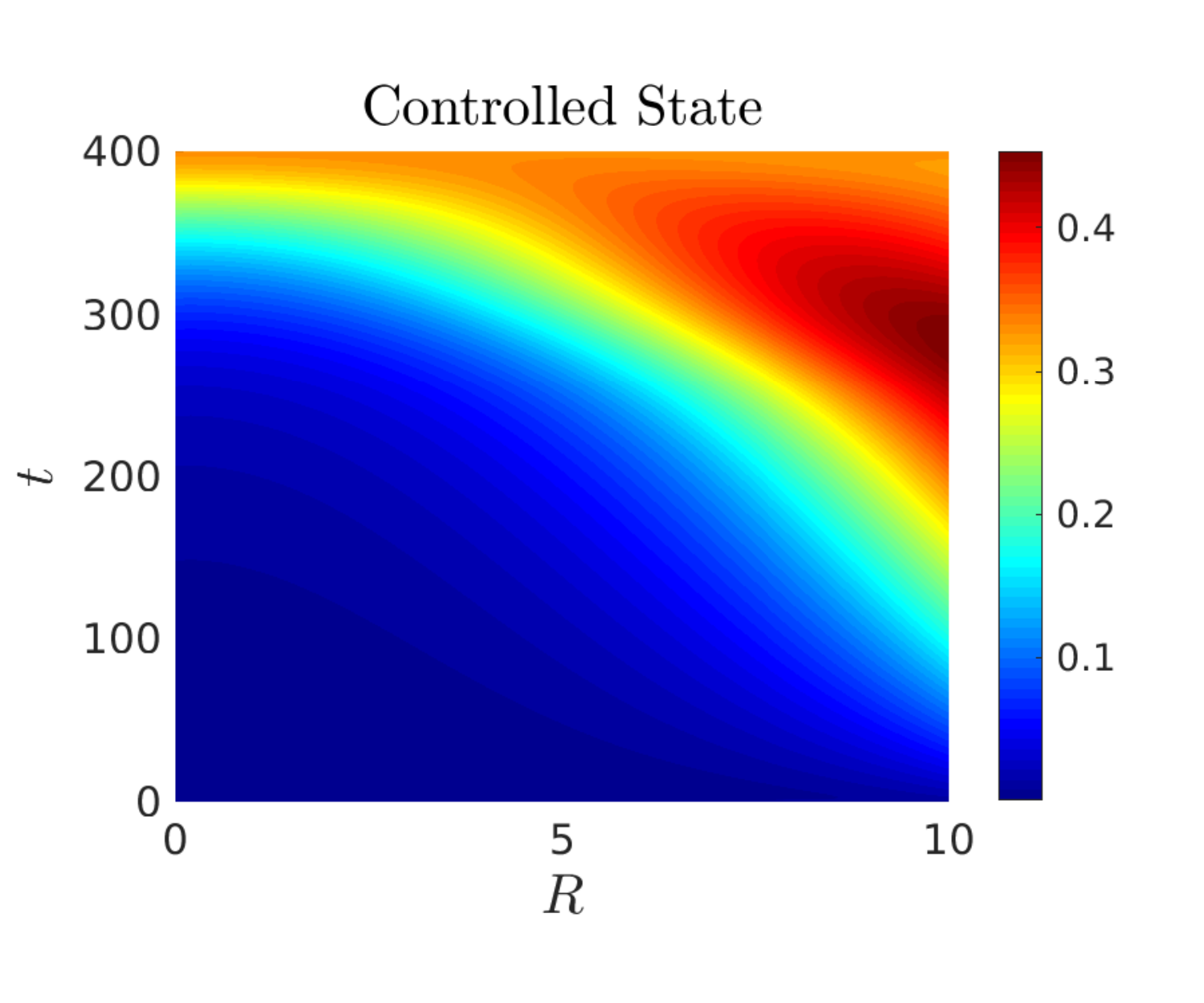}
    \includegraphics[scale=0.4]{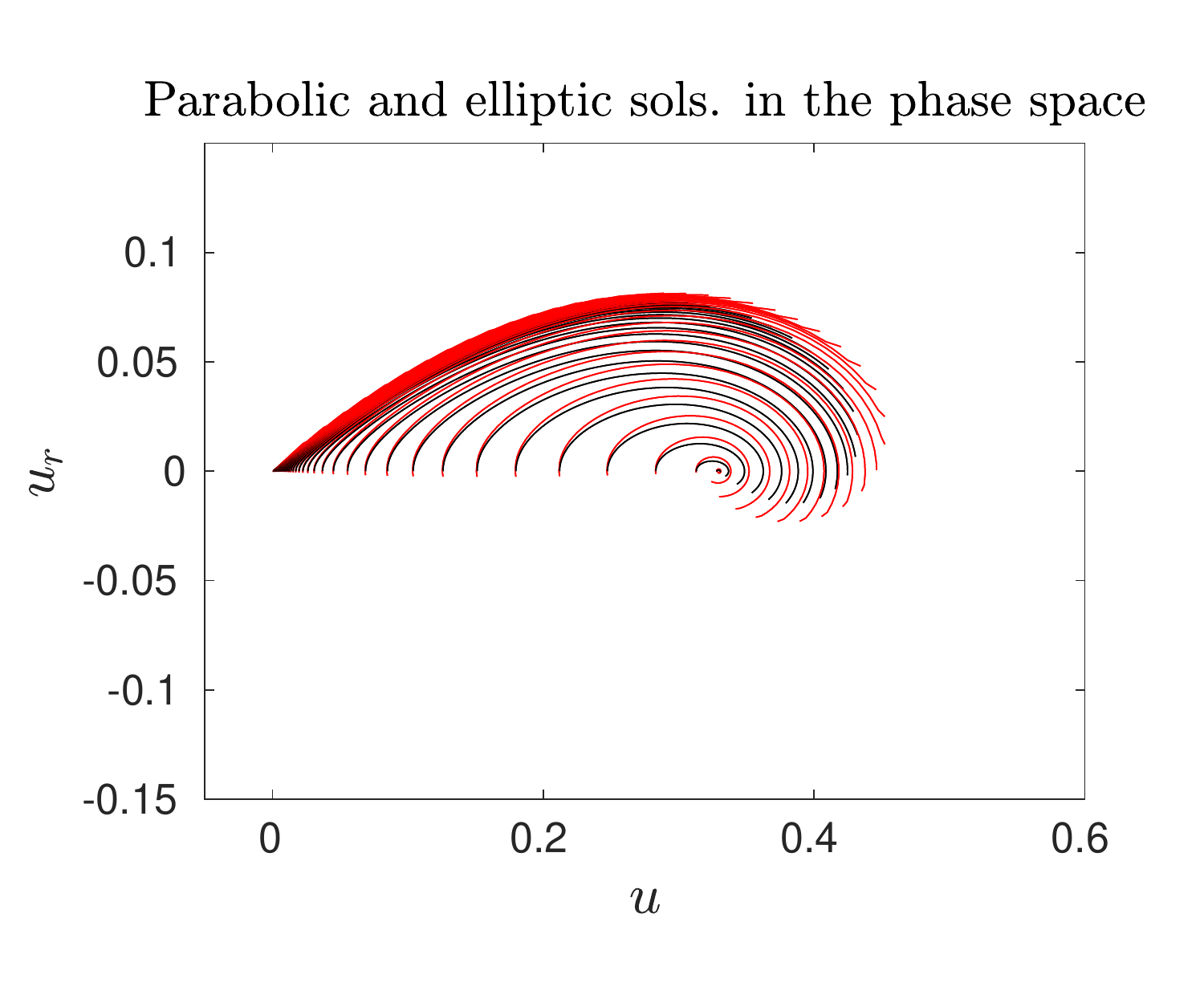}
    \caption{ Time evolution of the controlled state in radial coordinates. (left).  In red the parabolic controlled state at different
    equispaced times in the phase plane.
    In Black the elliptic solution which has the same condition
    at the origin (right). Nonlinearity $f(s)=s(1-s)(s-\theta)$.}\label{state-phase}
    \end{center}
\end{figure}

\section{Conclusions and perspectives}\label{Sconcl}
In this work, we have seen that the presence of state constraints can lead to the existence of nontrivial solutions that act as fundamental obstructions to the controllability for certain initial data (barrier functions). The domain and the nonlinearity play an essential role in the existence of barriers for reaching $w\equiv 0$.

Moreover, the existence of traveling waves for the corresponding Cauchy problem in the whole space helps us to determine the nonexistence of barrier functions for reaching $w\equiv 1$.

In the bistable case, for reaching the intermediate equilibrium $w\equiv \theta$, the staircase method has been crucial. The construction of the corresponding path relies on two ideas. First, we enlarge the domain to a ball, and second, when the elliptic problem in the ball is understood as an ODE problem, we observe that there exists a positively invariant region in the phase space containing our target.

When constraints in the state are present, we can encounter different situations, some of the analyzed in the paper that are summarized hereafter:
\begin{enumerate}
 \item There does not exist any continuous path of admissible steady-states connecting the initial steady-state and the target. However, we are able to control from one to another because the target is an attractor . This is the case discussed employing a comparison with the traveling waves with the stabilization to $w\equiv1$. 
 \item There does not exist any continuous path of admissible steady-states connecting an initial steady-state with the target, and we are not able to control. The emergence of a barrier illustrates this case. 
 \item There is a continuous path of admissible steady-states from our initial steady-state and the target. This implies the controllability. Moreover, we emphasize that:
 \begin{itemize}
 \item For such domain, there can be nontrivial solutions that can act as a barrier for certain initial data. However, the fact that we have an attainable path for our initial data ensures that we will be able to control regardless of its existence.
 \item The stability of the target steady-state does not matter. The path of steady-states ensures that we can control towards an unstable equilibrium. 
\end{itemize}
\end{enumerate}
Moreover, we have observed that the staircase method can be used to stabilize, in an open-loop manner, around a barrier.

The construction of paths of admissible steady-states in general is not a trivial issue, however, we have seen in Remark \ref{critRemark} that we can not aim to build a path along two steady-states if their $\omega$-limits with control $a=0$ are in comparison.

We can summarize our conclusions for the Bistable case with the following diagrams of Figure \ref{maptetasmall} and \ref{mapteta12}, where the possible transitions to the constant steady-states is depicted.
\begin{figure}[H]
\centering
 \begin{tikzcd}[column sep=tiny]
        1\ar[drr,dash,dashed,green]& \hspace{1cm}&
            \\
        & &\theta\ar[ull, bend right=30]\ar[ull, bend right=50]\ar[ull, "TW", bend right=70]\ar[dll, bend left=30]\ar[dll, bend left=50]\ar[dll, "TW", bend left=70]\ar[dll,squiggly,dash,red]\\
        0\ar[uu, "TW", bend left=30]& &
\end{tikzcd}
\caption{Connectivity map for $F(1)>0$. In red, it is shown an admissible continuous path of steady-states (for any $\Omega$ and any $\mu>0$) connecting stationary solutions. In green, it is an admissible and continuous path of steady-states connecting two stationary solutions, but in this case, its existence depends on $\Omega$ and $\mu$. In black, traveling waves for the Cauchy problem are shown. The Traveling wave from $w\equiv0$ to $w\equiv1$ is unique while the traveling waves from $w\equiv\theta$ to $w\equiv1$ or to $w\equiv0$ are infinitely many.}\label{maptetasmall}
\end{figure}
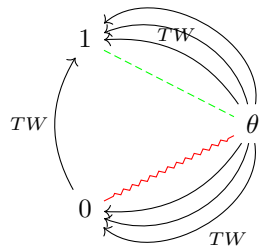

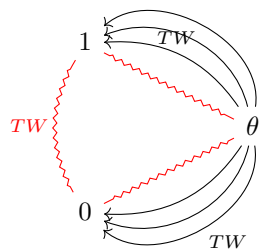
\begin{figure}[H]
\centering
\begin{tikzcd}[column sep=tiny]
        1\ar[drr,squiggly,dash,red]& \hspace{1cm}&
            \\
        & &\theta\ar[ull, bend right=30]\ar[ull, bend right=50]\ar[ull, "TW", bend right=70]\ar[dll, bend left=30]\ar[dll, bend left=50]\ar[dll, "TW", bend left=70]\ar[dll,squiggly,dash,red]\\
        0\ar[uu,squiggly,dash, "TW", red, bend left=30]& &
\end{tikzcd}
\caption{Connectivity map for $F(1)=0$. In red, an admissible continuous path of steady-states (for any $\Omega$ and any $\mu>0$) connecting stationary solutions is shown. The traveling wave from $w\equiv0$ to $w\equiv1$ is unique  and stationary, giving a continuous path of admissible steady-states connecting $w\equiv0$ and $w\equiv1$. In black, non-stationary traveling waves for the Cauchy problem are shown.  The traveling waves from $w\equiv\theta$ to $w\equiv1$ or to $w\equiv0$ are infinitely many.}\label{mapteta12}
\end{figure}

Further perspectives and problems are:
\begin{itemize}
 \item To study the structure of models carrying spatial heterogeneity.
 More realistic models carry more spatial dependences.
 \begin{equation*}
 \begin{dcases}
    u_t-\text{div}\left(A(x)\nabla u\right)+\langle b(x), \nabla u\rangle=f(u,x) &\qquad (x,t)\in \Omega\times(0,T],\\
  u=a(x,t) &\qquad (x,t)\in\partial \Omega\times (0,T],\\
  0\leq u(x,0)\leq 1.
 \end{dcases}
 \end{equation*}
 For instance, the carrying capacity or the diffusion can vary 
 depending on the space or having space-dependent drifts. This is tackled in \cite{drift} for the case of spatially heterogeneous drifts,
  \begin{equation*}
 \begin{dcases}
    u_t-\Delta u+\left\langle \frac{\nabla N(x)}{N(x)}, \nabla u\right\rangle=f(u) &\qquad (x,t)\in \Omega\times(0,T],\\
  u=a(x,t) &\qquad (x,t)\in\partial \Omega\times (0,T],\\
  0\leq u(x,0)\leq 1.
 \end{dcases}
 \end{equation*}
 where the authors extend the results of the present work. The presence of heterogeneity leads, for example, can lead to obstructions for reaching $w\equiv 1$. Furthermore, an extended version of the staircase method \cite{DARIO} is needed for controlling towards $w\equiv \theta$ for a small varying heterogeneity on the drift.
 
 \item The proof given here is based on the staircase method \cite{DARIO}, this strategy requires, by construction, a long time for achieving the target. The construction of the path guarantees that for specific initial data, the set of controls that drive from $u_0$ to the target $w\equiv \theta$ is not empty for $T$ big. But this geometrical construction does not give us any insight into controls that are not close to the path. For example, how can the dynamical control associated with the minimal controllability time be?
 \item Following the previous point, many perspectives are open, for instance, can we build a control that sends our state to the stable manifold of $w\equiv \theta$? How much time do we need to reach this manifold? 
 
 \item Other nonlinearities can also be considered, affecting, for instance, the principal part, such as the $p$-Laplacian. In these cases, the phase-plane analysis will be more intricate.
 
 \item To extend these results to systems of several coupled semilinear PDEs. More realistic ecological models or chemical reactions will carry a higher number of species with different relationships in the nonlinearity \cite{PERTHAME,MURRAY-BIO}. For example

 \begin{equation*}
  \begin{dcases}
   \partial_tu_1-\mu_1\Delta u_1=f_1(u_1,u_2,u_3)&\quad (x,t)\in\Omega\times(0,T],\\
    \partial_tu_2-\mu_2\Delta u_2=f_2(u_1,u_2,u_3)&\quad (x,t)\in\Omega\times(0,T],\\
   \partial_tu_3-\mu_3\Delta u_3=f_3(u_1,u_2,u_3)&\quad (x,t)\in\Omega\times(0,T],\\
   u_1=a(x,t)\in[0,1]&\quad (x,t)\in\partial \Omega\times(0,T],\\
   \frac{\partial}{\partial \nu} u_j=0&\quad (x,t)\in\partial \Omega\times(0,T]\quad j=2,3,\\
   0\leq u_i(x,0)\leq 1&\quad i=1,2,3.
  \end{dcases}
 \end{equation*}
 \item Due to technical reasons regarding the construction of the path, we have considered a control acting in the whole boundary. An important future perspective is to construct the path taking only a control in a part of the boundary $\eta \subset \partial\Omega$:
\begin{equation*}
 \begin{dcases}
  u_t-\mu\Delta u=f(u)&\quad (x,t)\in\Omega\times(0,T],\\
  u(x,t)=a(x,t)&\quad (x,t)\in\eta\times(0,T],\\
    \frac{\partial}{\partial \nu}u(x,t)= 0&\quad (x,t)\in\partial\Omega\backslash \eta\times(0,T],\\
  0\leq u(x,t)\leq 1&\quad (x,t)\in\Omega\times[0,T].
 \end{dcases}
\end{equation*}

\end{itemize}

\section*{Acknowledgments}
We thank Idriss Mazari for his valuable comments and José Ramón Uriarte for giving the first motivation of the problem in the context of evolutionary game theory in linguistics.
\section*{Funding}
This project has received funding from the European Research Council (ERC) under the European Union's Horizon 2020 research and innovation programme (grant agreement NO. 694126-DyCon). The work of both authors was partially supported by the Grant MTM2017-92996-C2-1-R COSNET of MINECO (Spain) and by the Air Force Office of Scientific Research (AFOSR) under Award NO. FA9550-18-1-0242. The work of E.Z. was partially funded by the Alexander von Humboldt-Professorship program, the European Unions Horizon 2020 research and innovation programme under the Marie Sklodowska-Curie grant agreement No.765579-ConFlex, the Grant ICON-ANR-16-ACHN-0014 of the French ANR and the Transregio 154 Project ``Mathematical Modelling, Simulation and Optimization Using the Example of Gas Networks'' of the German DFG

\appendix
\

\section{Estimates on the thresholds}\label{appendixMONO}

The estimates on the thresholds can be found in the classical literature \cite{BERESTYCKI-LIONS,PLLEPSSEQ}.  Here we will provide the explicit computation of the lower bound following a variational approach. 

\begin{prop}[A lower bound for $\mu^*$]
 
  Assume that $f(0)=f(\theta)=f(1)=0$, and that $f'(0)<0$, $f'(1)<1$, $f'(\theta)>0$. Moreover consider $F(v)=\int_0^v f(s)ds$ and assume that $F(1)>0$.
  Consider $\Omega\subset\mathbb{R}^N$ be a bounded set with boundary $C^2$, consider also the following problem:
  \begin{equation}\label{upbound}
  \begin{dcases}
   -\mu\Delta u= f(u)\quad&x\in\Omega,\\
   u=0\quad&x\in\partial\Omega,\\
   u>0\quad &x\in\Omega.
  \end{dcases}
  \end{equation}
  Denote by $B_\Omega$ a ball of maximal measure inside $\Omega$, $B_\Omega\subset\Omega$.
  Then, for any $\mu>0$ fulfilling 
   \begin{equation*}
     \mu<\frac{2\delta^2\Gamma\left(\frac{N}{2}+1\right)^{2/N}\left(F(\theta)+(1-\delta)^N(F(1)-F(\theta))\right)m(B_\Omega)^{2/N}}{\pi \left(1-(1-\delta)^N\right)}
    \end{equation*}
  there exists a solution of the problem \eqref{upbound}, where $\delta>0$ fulfills
    \begin{equation*}
   \delta < 1-\left(\frac{-F(\theta)}{F(1)-F(\theta)}\right)^{1/N}
  \end{equation*}
  and $\Gamma$ is the gamma function.
  This implies a lower bound for $\mu^*$.
 \end{prop}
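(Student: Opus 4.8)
The plan is to run the direct method of the calculus of variations on the energy functional $J[u]=\int_\Omega \frac{\mu}{2}|\nabla u|^2-F(u)\,dx$ associated to \eqref{upbound}, exactly as announced in the remark preceding the statement. Since $J[0]=0$, it suffices to exhibit a single competitor $v_\delta\in H_0^1(\Omega)$ with $J[v_\delta]<0$: strict negativity of the infimum then forces any minimizer to be nontrivial. To make the direct method run, I would first replace $f$ by a truncation $\tilde f$ that coincides with $f$ on $[0,1]$ and vanishes outside it (equivalently $\tilde F\equiv 0$ on $(-\infty,0]$ and $\tilde F\equiv F(1)$ on $[1,\infty)$), so that $\tilde F$ is bounded and $J$ is coercive and bounded below on $H_0^1(\Omega)$. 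Weak lower semicontinuity of the Dirichlet term, together with continuity of $u\mapsto\int\tilde F(u)$ for the weak $H_0^1$ topology (compact Sobolev embedding), then yields a minimizer $u$.

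The test function is where the explicit threshold comes from. Let $\rho$ be the radius of the largest inscribed ball $B_\Omega$, so that $m(B_\Omega)=\omega_N\rho^N$ with $\omega_N=\pi^{N/2}/\Gamma(N/2+1)$ the volume of the unit ball. I would take $v_\delta$ radial and supported in $B_\Omega$, equal to $1$ on the concentric ball of radius $(1-\delta)\rho$, interpolating linearly down to $0$ on the annulus $(1-\delta)\rho\le|x|\le\rho$, and $0$ outside. Two estimates feed into $J[v_\delta]$. The gradient is supported in the annulus with $|\nabla v_\delta|=1/(\delta\rho)$, whence $\int_\Omega|\nabla v_\delta|^2=\omega_N\rho^{N-2}(1-(1-\delta)^N)/\delta^2$. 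For the potential term I split $B_\Omega$ into the inner ball, where $v_\delta=1$ contributes $F(1)(1-\delta)^N m(B_\Omega)$, and the annulus, where $v_\delta\in[0,1]$ gives $F(v_\delta)\ge\min_{[0,1]}F=F(\theta)$ (the minimum of $F$ is attained at $\theta$ because $f<0$ on $(0,\theta)$ and $f>0$ on $(\theta,1)$). Altogether $\int_\Omega F(v_\delta)\ge m(B_\Omega)\big[F(\theta)+(1-\delta)^N(F(1)-F(\theta))\big]$.

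Combining the two estimates, the condition $J[v_\delta]<0$ reduces to $\tfrac{\mu}{2}\,\omega_N\rho^{N-2}(1-(1-\delta)^N)/\delta^2<m(B_\Omega)\big[F(\theta)+(1-\delta)^N(F(1)-F(\theta))\big]$. Solving for $\mu$, and using $m(B_\Omega)/(\omega_N\rho^{N-2})=\rho^2=(m(B_\Omega)/\omega_N)^{2/N}$ together with $\omega_N^{-2/N}=\Gamma(N/2+1)^{2/N}/\pi$, produces precisely the stated threshold. For the right-hand side to be positive one needs $F(\theta)+(1-\delta)^N(F(1)-F(\theta))>0$, i.e. $(1-\delta)^N>-F(\theta)/(F(1)-F(\theta))$, which is exactly the admissibility constraint imposed on $\delta$; this range is nonempty since $F(1)>0$ (with $F(\theta)<0$) forces $-F(\theta)/(F(1)-F(\theta))\in(0,1)$.

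Finally I would upgrade the minimizer to an admissible solution of \eqref{upbound}. Replacing $u$ by $u^+$ and then by $\min(u,1)$ does not increase $J$, since $\tilde F(u)=\tilde F(u^+)=\tilde F(\min(u^+,1))$ pointwise while the Dirichlet energy can only decrease; hence I may assume $0\le u\le 1$, on which range $\tilde f=f$, so $u$ solves $-\mu\Delta u=f(u)$. Elliptic regularity gives classical smoothness, and the strong maximum principle applied to $u$ and to $1-u$ promotes the bounds to $0<u<1$. I expect the main subtlety to be exactly this admissibility step, together with the clean bookkeeping of the transition-layer estimate that pins down the constants; the remainder is routine. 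Existence for every $\mu$ below the threshold then yields the claimed lower bound on $\mu^*$.
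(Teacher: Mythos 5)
Your proof is correct and follows essentially the same route as the paper: the identical radial test function (equal to $1$ on $(1-\delta)B_\Omega$, linearly interpolating to $0$ on the annulus), the same lower bound $F(v_\delta)\ge F(\theta)$ on the transition layer, and the same algebra converting $J[v_\delta]<0$ into the stated threshold on $\mu$ and the admissibility constraint on $\delta$. The only difference is that you additionally spell out the standard steps the paper leaves implicit, namely the truncation/direct-method argument producing a nontrivial minimizer and the upgrade to an admissible solution with $0<u<1$ via the strong maximum principle.
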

 
 \begin{remark}
  Roughly speaking, the proposition says that if there exists a ball  big enough inside the domain under consideration, then there is multiplicity of solutions.
 \end{remark}

 \begin{proof}
    We know that $w\equiv 0$ is a solution of the Euler-Lagrange equations of the corresponding functional.
    \begin{equation*}
     I[v]=\frac{1}{2}\int_\Omega |\nabla v|^2 -\frac{1}{\mu} \int_\Omega F(v)dx
    \end{equation*}

    We want to find a function such that $I[v]<0$. We consider a ball inside our domain $\Omega$, $B_\Omega$. Then, we construct a family of functions  $v_\delta\in H^1_0(B_\Omega)$.
    
    The idea is first ensure under which conditions on $\delta$ we have that $\int_\Omega F(v(x))>c>0$. Once we have this, we choose $\mu$ in order to dominate the term $
     \frac{1}{2}\int_\Omega |\nabla v|^2dx$,  that will depend only on the $\delta$ chosen before and thus we constructed $v$ such that $I[v]<0$.    

    We consider another ball inside the previous ball defined by $
     (1-\delta)B_\Omega:=\left\{x\in \mathbb{R}^N\text{ s.t. }\frac{x}{1-\delta}\in B_\Omega\right\}$,  for $1>\delta>0$.
    We define $v_\delta$ in the following way, let $R$ be the radius of $B_\Omega$, $(1-\delta)R$ will be the radius of $(1-\delta)B_\Omega$:

    \begin{equation*}
     v_\delta(x)=\begin{dcases}
        1\quad &\text{if }x\in (1-\delta)B_\Omega,\\
        -\frac{1}{\delta R}(\|x\|_2-R)\quad& \text{if }x\in B_\Omega \backslash(1-\delta)B_\Omega.\\
     \end{dcases}
    \end{equation*}

    Note that $v_\delta\in H_0^1(B_\Omega)$ and extending it to be zero in $\Omega\backslash B_\Omega$ we have a function in $H^1_0(\Omega)$.
    Then, we have that:
    \begin{equation*}
    |\nabla v_\delta|^2=\begin{dcases}
     0 \quad &\text{in } (1-\delta)B_\Omega,\\
     \frac{\pi}{\delta^2} \left( m(B_\Omega)\Gamma\left(\frac{N}{2}+1\right)\right)^{-2/N}\quad & \text{in }B_\Omega\backslash(1-\delta)B_\Omega,
    \end{dcases}
    \end{equation*}
        where $\Gamma$ denotes the gamma function, and the term $$\pi\left( m(B_\Omega)\Gamma\left(\frac{N}{2}+1\right)\right)^{-2/N}, $$ comes from the volume of a $N$ dimensional sphere $$m(B_\Omega)=\frac{\pi^{N/2}}{\Gamma\left(\frac{N}{2}+1\right)}R_{B_\Omega}^N.$$ Moreover we have that:
    \begin{align*}
     &m\left((1-\delta)B_\Omega\right)=(1-\delta)^Nm(B_\Omega),\\
     &m\left(B_\Omega\backslash(1-\delta)B_\Omega\right)=\left(1-(1-\delta)^N\right)m(B_\Omega),
    \end{align*}
    we want to find a pair $(\mu,\delta)$ for which $
     I[v]<0.$ 
      For doing so, first we choose $\delta>0$ to be small enough such that $\int_{B_\Omega} \int_0^{v(x)} f(s)ds dx>c>0$, we split the space integral in two parts:
    \begin{align*}
        \int_{B_\Omega} \int_0^{v(x)} f(s)ds dx&=\int_{B_\Omega\backslash (1-\delta) B_\Omega} \int_0^{v(x)} f(s)ds dx+\int_{(1-\delta)B_\Omega}\int_0^1f(s)ds dx\\
        &\geq \int_{B_\Omega\backslash (1-\delta) B_\Omega} F(\theta) dx+F(1)m\left((1-\delta)B_\Omega\right)\\
        &=  F(\theta) m\left(B_\Omega\backslash (1-\delta) B_\Omega\right) +F(1)m\left((1-\delta)B_\Omega\right)\\
        &=m(B_\Omega)\left[F(\theta)\left(1-(1-\delta)^N\right)+F(1)(1-\delta)^N\right]\\
        &=m(B_\Omega)\left[F(\theta)+(1-\delta)^N\left(F(1)-F(\theta)\right)\right]
    \end{align*}
    So, it will suffice if we ensure that $F(\theta)+(1-\delta)^N\left(F(1)-F(\theta)\right)>0$ which corresponds to ask that:
    \begin{equation}\label{anterior}
     \delta< 1-\left(\frac{-F(\theta)}{F(1)-F(\theta)}\right)^{1/N}
    \end{equation}
    We fix $\delta>0$ fulfilling \eqref{anterior} and now or goal is to choose $\mu$ small enough so that the space integral on $F(v(x))$ dominates the gradient part.
    \begin{align*}
     I[v_\delta]&=\int_\Omega \frac{1}{2} |\nabla v_\delta|^2-\frac{1}{\mu} F(v_\delta(x))dx\\
     &=\int_{B_\Omega} \frac{1}{2} |\nabla v_\delta|^2-\frac{1}{\mu} F(v_\delta(x))dx\\
     &\leq \int_{B_\Omega\backslash (1-\delta) B_\Omega} \frac{1}{2} |\nabla v_\delta|^2dx-\frac{1}{\mu} \left(F(\theta)+(1-\delta)^N\left(F(1)-F(\theta)\right)\right)m(B_\Omega)\\
     =&m(B_\Omega)\left(\frac{1}{2}\left((1-(1-\delta)^N)\right)\frac{\pi}{\delta^2}\left(m(B_\Omega)\Gamma\left(\frac{N}{2}+1\right)\right)^{-2/N}-\frac{1}{\mu} \left(F(\theta)+(1-\delta)^N\left(F(1)-F(\theta)\right)\right)\right)
    \end{align*}
    so, it will be sufficient if:
    \begin{equation*}
     \frac{1}{2}\left((1-(1-\delta)^N)\right)\frac{\pi}{\delta^2}\left(m(B_\Omega)\Gamma\left(\frac{N}{2}+1\right)\right)^{-2/N}-\frac{1}{\mu} \left(F(\theta)+(1-\delta)^N\left(F(1)-F(\theta)\right)\right)<0
    \end{equation*}
    which corresponds to:
    \begin{equation*}
     \mu<\frac{2\delta^2\Gamma\left(\frac{N}{2}+1\right)^{2/N}\left(F(\theta)+(1-\delta)^N(F(1)-F(\theta))\right)m(B_\Omega)^{2/N}}{\pi \left(1-(1-\delta)^N\right)}
    \end{equation*}

 \end{proof}
   \begin{remark}
   Notice that the structure of the proof of Proposition \ref{Bistable} also works for the monostable case. When bounding by above the integral of the primitive, we will have $F(1)$ instead of $F(1)-F(\theta)$ because the primitive in the monostable case is monotone
  \end{remark}

\bibliography{CBIB,url = false,URL=false}

\end{document}